\documentclass[12pt,letterpaper,titlepage]{amsart}
\usepackage{amsmath, amssymb, amsthm, amsfonts,amscd,xr}
\usepackage[all]{xy}
\CompileMatrices
\input epsf.tex
\newcommand{\N}{\mathbb{N}}

\newcommand{\R}{\mathbb{R}}
\newcommand{\C}{\mathbb{C}}

\def\beq{\begin{equation}}
\def\eeq{\end{equation}}

\def\arr{\hbox to 20pt{\rightarrowfill}}

\def\hat{\widehat}
\def\1{\mathbf 1}
\def\e{\varepsilon}
\def\tilde{\widetilde}
\def\sl{\mathfrak {sl}}

\def\Gl{\mathrm {Gl}}
\def\Sl{\mathrm {Sl}} 
\def\SO{\mathrm {SO}} 
 
\def\max{\mathrm {max}}
\def\A{\mathcal A} 
\def\D{\mathcal D} 
\def\cC{\mathcal C} 
\def\H{\mathcal H} 
\def\HC{\mathcal HC}
\def\O{\mathcal O} 
\def\M{\mathcal M} 
\def\W{\mathcal W} 
 
\def\cR{\mathcal R} 

\def\cZ{\mathcal Z}
 
\def\U{\mathcal U} 
\def\F{\mathcal F}

\def\nin{\noindent}

\newenvironment{res} 
               {\begin{equation} 
\begin{minipage}{0.85\textwidth}} 
               { \end{minipage}\end{equation} } 
\def\ber{\begin{res} } 
\def\eer{\end{res}} 
 
\numberwithin{equation}{section} 
\newtheorem{thm}{Theorem}[section]

\newcommand{\oline}{\overline}

%
%
\newtheorem{lemma}[thm]{Lemma} 
\newtheorem{lem}[thm]{Lemma} 
 
%
%
\newtheorem{cor}[thm]{Corollary} 
\newtheorem{ex}[thm]{Example} 
\newtheorem{prop}[thm]{Proposition} 
 
\newtheorem{dfn}[thm]{Definition} 
\newtheorem{rem}[thm]{Remark} 
\newtheorem{problem}[thm]{Problem}

\makeatletter 
\def\section{\@startsection {section}{1}{\z@}{3.5ex plus 1ex minus 
    .2ex}{2.3ex plus .2ex}{\large\bf}} 
    \def\subsection{\@startsection{subsection}{2}{\z@}{3.25ex plus 1ex minus 
 .2ex}{1.5ex plus .2ex}{\bf}} 
\makeatother 
\def\qed{\hfill $\square$\par\vspace{5pt}} 
\def\bysame{\leavevmode\hbox to3em{\hrulefill}\,} 

\def\Ad{\operatorname{Ad}} 
\def\supp{\operatorname{supp}} 
\def\spec{\operatorname{spec}}
\def\e{\epsilon}

\def\af{\mathfrak{a}}

\def\End{\operatorname {End}}
\def\gf{\mathfrak{g}} 
 
\def\hf{\mathfrak{h}}

\def\Ind{\operatorname{Ind}}
\def\id{\operatorname{id}}
\def\Irr{\operatorname{Irr}}
\def\kf{\mathfrak{k}} 
 
\def\m{\mathfrak{m}} 
\def\nf{\mathfrak{n}} 
 
\def\pf{\mathfrak{p}}

\def\tf{\mathfrak{t}}

\def\tr{\operatorname{tr}} 

\def\A{\mathcal{A}}

\def\Ad{\operatorname{Ad}}
\def\ad{\operatorname{ad}}
\def\O{\mathcal{O}} 
\def\Re{\operatorname{Re}}
\def\Hom{\operatorname{Hom}}

\def\im{\operatorname{im}}
\def\pr{\operatorname{pr}}
\def\rank{\operatorname{rank}}

\def\Res{\operatorname{Res}}

\def\SAF{\mathcal{SAF}}

\def\Cc{\mathcal{C}} 
 
\def\M{\mathcal{M}} 
\def\I{\mathcal{I}} 
\def\A{\mathcal{A}} 
\def\H{\mathcal{H}} 
\def\bH{\mathbf{H}} 
\def\F{\mathcal{F}}

\def\E{\mathcal{E}} 
 
\def\S{\mathcal{S}} 
 
\def\bV{\mathbf{V}}

\def\W{\mathcal{W}}

\def\bs{\backslash}
\def\ra{\rangle}
\def\la{\langle}

\makeindex 
\title[Smooth globalizations]
{Smooth Fr\'echet globalizations of Harish-Chandra modules}
\begin{document}
\author{Joseph Bernstein}
\author{Bernhard Kr\"otz}

\address{School of Mathematical Sciences\\ Tel Aviv University\\
Ramat Aviv\\Tel Aviv 69978\\ Israel\\ bernstei@post.tau.ac.il}
\address{Institut f\"ur Mathematik\\ Universit\"at Paderborn\\
Warburger Stra\ss{}e 100\\ 33098 Paderborn\\ Germany\\bkroetz@math.uni-paderborn.de} 

\date{\today} 
\thanks{}
\maketitle

\section{Introduction}

Let $G$ be a linear reductive real Lie group  $G$ with Lie algebra 
$\gf$\footnote{Throughout this text Lie groups will be denoted 
by upper case Latin letters, $G$, $K$, $N$ ... , and their 
corresponding Lie algebras by lower case German letters
$\gf$, $\kf$, $\nf$ etc.}. Let us fix a maximal compact subgroup $K$ of $G$. 
The representation theory of $G$ admits an algebraic underpinning 
encoded in the notion of a {\it Harish-Chandra module}. 
\par By a Harish-Chandra module we shall understand a finitely generated 
$(\gf, K)$-module with finite $K$-multiplicities. 
Let us denote by $\HC$ the category whose objects are Harish-Chandra
modules and whose morphisms are linear $(\gf, K)$-maps. 
By a {\it globalization} of a Harish-Chandra module $V$ 
we understand a representation $(\pi, E)$ of $G$ such that 
the $K$-finite vectors of $E$ are isomorphic to $V$ as a $(\gf, K)$-module. 
\par Let us denote by $\SAF$ the category whose objects are 
smooth admissible moderate growth Fr\'echet representations of $G$ 
with continuous linear $G$-maps as morphisms. We 
consider the functor:

$$\F: \SAF \to \HC, \ \ E\mapsto E^{K-{\rm fin}}:=\{K-\hbox{finite vectors of E}\}\, .$$

\par The Casselman-Wallach globalization theorem (\cite{Cas}, \cite{W0} and  \cite{W2}, Sect. 11) 
essentially asserts that $\F$ is an equivalence of categories.  To phrase it differently, each  
Harish-Chandra module $V$  admits an $\SAF$-globalization $(\pi, V^\infty)$
which is unique up to isomorphism.  
It follows that 
$$V^\infty=\pi(\S(G))V$$
where $\S(G)$ is the Schwartz-algebra of rapidly decreasing functions on $G$, 
and $\pi(\S(G))V$ stands for the vector space spanned by $\pi(f)v$ for $f\in \S(G)$, 
$v\in V$. In particular, $V$ is irreducible if and only if $V^\infty$ is an 
algebraically simple $\S(G)$-module.

\par The objective of this paper is to present a new approach to the
globalization theorem.  
\par Our approach starts with 
a thorough study investigation of the topological nature of $\SAF$-globalizations.  
A norm $p$ on a Harish-Chandra module $V$  will be called {\it $G$-continuous}
provided the completion $V_p$ of the normed space $(V, p)$  gives rise to 
a Banach-representation of $G$. We introduce the {\it Sobolev order} on the 
set of $G$-continuous norms:
$$p\prec q :\iff (\exists C>0, k\in \N_0) (\forall v \in V) \qquad p(v) \leq C q_k (v)$$
where $q_k$ refers to the $k$-th Sobolev norm of $q$.  Two $G$-continuous norms
will be called {\it Sobolev-equivalent}  provided $p\prec q$ and $q\prec p$.
\par Casselman's subrepresentation theorem implies that every Harish-Chandra
module admits a $G$-continuous norm. Within our terminology the
Casselman-Wallach theorem now reads:

\begin{thm} Any two $G$-continuous norms on a Harish-Chandra module $V$ are 
Sobolev-equivalent. 
\end{thm} 

On a technical level it is quite cumbersome to deal with 
arbitrary $G$-continuous norms $p$. However, the algebraic fact that $K$-
multiplicities on a Harish-Chandra module are polynomially bounded, implies 
that the smooth vectors
$V_p^\infty$ form a nuclear Fr\'echet space. It implies that 
every $G$-continuous norm 
is Sobolev-equivalent to a $K$-invariant Hermitian norm (see Theorem \ref{th=nuc} below).
\par It is convenient to introduce an auxiliary notion and 
call a Harish-Chandra module {\it good} provided it admits a unique
$\SAF$-globalization. Note that 
the Casselman-Wallach Theorem is the assertion that all Harish-Chandra 
modules are good. Using elementary functional analysis we show in Section 7 that a Harish-Chandra module is 
good if and only if its associated matrix coefficients satisfy certain lower bounds which 
are uniform in the $K$-types (see Theorem \ref{thm=lobo}). 

\par Section 8  with Appendix A is devoted to minimal principal series
representations( i.e. representations which are induced off a minimal 
parabolic subgroup) and  their canonical 
Hilbert-globalizations as subspaces of $L^2(K)$.
For such representations  we define a Dirac-type 
sequence and establish essentially optimal uniform lower bounds for 
$K$-finite matrix coefficients (see Theorem \ref{th=1} below).  As a consequence 
we obtain that Harish-Chandra modules $V$ of the minimal principal series are good.
In addition we exhibit a generator $\xi\in V$ and an explicit linear
continuous  
section of the quotient homomorphism 
$$\S(G) \to V^\infty, \ \ f\mapsto \pi(f)\xi \, , $$
which depends holomorphically on the representation parameter of $\pi$ 
(see Theorem \ref{thm=main} and \ref{th=2}).

\par According to Casselman's subrepresentation 
theorem one can embed every Harish-Chandra module into a parabolically induced representation. In view of our results in Section 6 this implies that every Harish-Chandra 
module admits a minimal and maximal $G$-continuous norm with respect 
to the Sobolev order $\prec$ (see Cor. \ref{cor=mm}). Let us note that
this important technical 
step is likewise implied by Wallach's upper bounds on 
matrix coefficients (see \cite{W}, Theorem 4.3.5).

\par In Section 9 we show that "good" is preserved by extensions, induction and 
tensoring  with finite dimensional representations.  These elementary technical
results  are useful in the sequel. In particular 
it follows that the task to show 
that all Harish-Chandra modules are good is reduced to irreducible modules.  

\par In Section 10 we present Casselman's technique of holomorphic 
deformations of Harish-Chandra modules which, via Langlands classification,
leaves us to establish that all Harish-Chandra modules of the discrete series 
are good.  For a module $V$ of the discrete series we proceed as follows: we embed 
$V$ into a minimal principal series representation and are left to show  that the unitary norm 
on $V$ is Sobolev equivalent to the minimal norm. This in turn is reduced to 
a familiar result on meromorphic continuation of certain distributions
(see \cite{B} and Appendix B of this paper). 
In this context we  wish to point that the fact that discrete series modules 
are good can be also proved using Wallach's upper bounds  (\cite{W},
Theorem 4.3.5 and  \cite{W2}, Prop. 11.7.4).

\par In summary, we provide a functional analytic language for
globalizations and emphasize that our main input, i.e.  the estimates 
for minimal principal series representation in Section 8, cannot be deduced 
from the Casselman-Wallach theorem. 
In addition our bounds for matrix coefficients 
are locally uniform in the representation parameters 
and yield a Casselman-Wallach theorem for holomorphic 
families of Harish-Chandra modules (see Theorem \ref{th=2a}). 
This,  for instance,  is useful for the theory of Eisenstein series (see Theorem \ref{th=3} and Remark \ref{rem=Eis}).
    
\par It was our intention to write an essentially self-contained 
account on the subject which is accessible to graduate students. The reading
requires a good understanding 
of functional analysis and some  basic knowledge about real reductive groups
and Harish-Chandra modules as to be found in Wallach's text book \cite{W}, Sect. 1-3.

\section{Basic representation theory I: growth of representations}
We begin with a discussion of scale structures on Lie groups which give
us an appropriate notion of size on the group. We then collect 
a few standards definition and facts of representation theory
on topological vector spaces. 
After that we discuss growth issues of representations and introduce the 
notion of F-representation. We show that the category 
of smooth F-representations is isomorphic to a category 
of non-degenerate algebra representations of a
certain Schwartz algebra. 

\subsection{Scale structures on Lie groups}

Throughout this text $G$ will denote a Lie group. It is our objective to obtain a notion of size on $G$
which will be suitable to define growth for  a representation. 

\par By a {\it scale} on $G$  we understand a function $s:G\to \R^+$ such that: 
\begin{itemize}
\item $s$ and $s^{-1}$ are locally bounded, 
\item $s$ is {\it submultiplicative}, i.e. $s(gh)\leq s(g) s(h)$ for all $g, h\in G$. 
\end{itemize}

We introduce an ordering $\preccurlyeq$  on the space of  scale functions by 
$$s\preccurlyeq s' :\iff (\exists C>0, N\in \N)(\forall g\in G) \quad s(g) \leq C s'(g)^N\, .$$ 
This gives us a notion of equivalence $\sim$ on scale functions:
$$s\sim s' :\iff  s\preccurlyeq s' \quad\hbox{and}\quad s'\preccurlyeq s'\, .$$
By a {\it scale structure} on $G$ we understand an equivalence class $[s]$
of a scale function $s$. 

\par Note that every equivalence class $[s]$ admits a continuous representative. Henceforth we will only consider 
continuous scale functions. 
\par Let us discuss the various natural scale structures.

\subsubsection{The maximal scale structure}
Suppose that $G$ is connected and fix a left-invariant Riemannian 
metric ${\bf g}$ on $G$. Associated to ${\bf g}$ is 
the distance function $d(g,h)$, i.e. the minimum length of piecewise smooth curve 
joining $g$ and $h$ in $G$. Note that $d(\cdot, \cdot)$ is left $G$-invariant and hence can be recovered 
from 
$$d(g):=d(g, \1) \qquad (g\in G)\, .$$ 
Note that $d(g)$ is {\it subadditive}, i.e. $d(gh)\leq d(g) + d(h)$ for all $g, h\in G$.  
Hence $s_\max (g):=e^{d(g)}$ defines a scale function on $G$. 
This scale is maximal in the sense that for any other scale $s$ we have 
$s\preccurlyeq s_\max$ (see \cite{Ga}, Lemme 2).  In particular, the equivalence class $[s_\max]$ of $s_\max$ is 
independent of the choice of the particular left invariant metric. In the sequel we will refer 
to $[s_\max]$ as the {\it maximal scale structure}. 

\subsubsection{Algebraic scale structure} 
Let $G$ be a real algebraic group. We fix  a faithful algebraic representation $\iota: G\to \Gl(n,\R)$. 
Then 
$$\|g\|:= \tr (\iota(g)\iota(g)^t)+\tr (\iota(g^{-1})\iota(g)^{-t})\, \qquad (g\in G)$$ 
defines a smooth scale function on $G$. 
If we choose another faithful algebraic representation $\iota': G\to \Gl(n',\R)$, and if 
$\|\cdot\|'$ is the associated scale on $G$, then $\|\cdot\|$ and $\|\cdot\|'$
are equivalent. The resulting scale structure on $G$ will be referred to as the 
{\it algebraic scale structure}. 
We often refer to $\|\cdot\|$ as a {\it norm} on $G$ -- see \cite{W}, Sect. 2.A.2 for the notion of 
norm on a reductive group. 

\begin{lem} Let $G$ be a connected real reductive group. Then the algebraic and the maximal 
scale structure coincide. 
\end{lem}
\begin{proof} Let $\iota: G\to \Gl(n,\R)$ be a faithful representation and henceforth view 
$G$ as a subgroup of $\Gl(n,\R)$.  
We recall the Cartan decomposition $G=KAK$ where $K<G$ is a maximal compact 
sungroup and $A$ a non-compact torus. 
{}From the definitions of
the scale structures
involved it is easy to see that they 
coincide on $A$. The assertion follows. 
\end{proof}

\begin{rem} The maximal and algebraic scale structure of the algebraic group $G=(\R, +)$
are different (polynomial versus exponential growth).  
\end{rem}  

In the sequel we will understand in this paper by a Lie group $G$ a pair $(G, [s])$ where 
$[s]$ is a scale structure. If $G$ is real reductive, then $[s]$ shall be the maximal scale structure.

\subsection{Representations on topological vector spaces}

All topological vector spaces $E$ considered in this paper are 
understood to be locally convex. We denote by $\Gl(E)$ the group 
of all topological linear isomorphisms of $E$. 

\par Let $G$ be a Lie group and $E$ a topological vector space. 
By a {\it representation} $(\pi, E)$ of $G$ 
on $E$ we understand a homomorphism 
$\pi: G\to \Gl(E)$ such that the resulting  action 
$G\times E \to E $ is continuous. We emphasize that 
continuity is requested in both variables.
For an element 
$v\in E$ we shall denote by 
$$\gamma_v: G \to E, \ \ g\mapsto \pi(g)v$$
the corresponding continuous orbit map. 
The following Lemma is standard (cf. \cite{War}, Sect. 4.1).

\begin{lem}\label{repcrit} Let $G$ be a Lie group, $E$ a topological vector space, 
$\pi: G\to \Gl(E)$ a group homomorphism and $G\times E \to E$ 
the resulting action. Then the following 
statements are equivalent: 

\begin{enumerate} \item The action $G\times E\to E$ is continuous, i.e.
$(\pi, E)$ is a representation.  
\item \begin{enumerate} \item There exists a dense subset $E_0\subset E$ such
    that for all $v\in E_0$ the orbit map $\gamma_v: G\to E$
is continuous. 
\item For every compact subset $Q$ of $G$ the set $\{\pi(g)\mid g\in Q\}$ 
is an equicontinuous set of linear endomorphisms of $E$. 
\end{enumerate}
\end{enumerate}
\end{lem}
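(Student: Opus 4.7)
The plan is to establish both implications directly from the definition of joint continuity, with essentially all the work concentrated in (2)$\Rightarrow$(1).

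\emph{Direction (1)$\Rightarrow$(2).} Take $E_0 = E$ for (a). For (b), fix a $0$-neighborhood $U \subset E$. Since $\pi(g)\cdot 0 = 0$ for every $g$ and the action is jointly continuous at $(g,0)$, there exist an open neighborhood $W_g$ of $g$ in $G$ and a $0$-neighborhood $V_g \subset E$ with $\pi(h) V_g \subset U$ for all $h \in W_g$. A finite subcover $W_{g_1}, \ldots, W_{g_n}$ of the compact set $Q$ together with $V := \bigcap_i V_{g_i}$ proves that $\{\pi(h) : h \in Q\}$ is equicontinuous.

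\emph{Direction (2)$\Rightarrow$(1).} Fix $(g_0, v_0) \in G \times E$, a $0$-neighborhood $U$ in $E$, and a balanced $0$-neighborhood $U_0$ with $U_0 + U_0 + U_0 \subset U$. Choose a compact neighborhood $Q$ of $g_0$ and, using (b), a balanced $0$-neighborhood $V' \subset E$ with $\pi(h) V' \subset U_0$ for all $h \in Q$. Select a further balanced $0$-neighborhood $V^* \subset V'$ with $V^* + V^* \subset V'$. By density of $E_0$ there exists $v_1 \in E_0$ with $v_1 - v_0 \in V^*$, and (a) provides an open neighborhood $W \subset Q$ of $g_0$ on which $\pi(g) v_1 - \pi(g_0) v_1 \in U_0$. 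For $g \in W$ and $v \in v_0 + V^*$ we have $v - v_0 \in V'$ and $v - v_1 \in V^* + V^* \subset V'$, so in the decomposition
\[
\pi(g) v - \pi(g_0) v_0 = \pi(g)(v - v_1) + \bigl(\pi(g) v_1 - \pi(g_0) v_1\bigr) + \pi(g_0)(v_1 - v_0)
\]
all three summands lie in $U_0$; hence their sum lies in $U$, yielding continuity of the action at $(g_0, v_0)$.

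The only substantive difficulty is the upgrade in (2)$\Rightarrow$(1) from continuity of the orbit maps along the dense set $E_0$ to continuity of the action at every $v_0 \in E$. This is precisely the step that forces the use of the equicontinuity hypothesis (b): it is what makes the approximation error $\pi(g)(v - v_1)$ small \emph{uniformly} in $g$ over a neighborhood of $g_0$, without which no three-term splitting of this kind could close. Once this is understood, the decomposition above is essentially dictated by the structure of the problem, and the remaining estimates are bookkeeping.
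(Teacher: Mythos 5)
Your proof is correct and follows essentially the same strategy as the paper: the $(1)\Rightarrow(2)$ direction is identical, and for $(2)\Rightarrow(1)$ both arguments rest on the same three-term telescoping decomposition through an approximating $v_1\in E_0$, with equicontinuity controlling the two ``error'' terms uniformly. The only differences are cosmetic: the paper first reduces to continuity at $(\mathbf 1,v_0)$ via the group structure and then uses convex neighborhoods with $\tfrac14 V$-type arithmetic (relying on the standing local-convexity assumption), whereas you work directly at a general $(g_0,v_0)$ using balanced neighborhoods and the $U_0+U_0+U_0\subset U$ convention, which incidentally makes your argument valid in an arbitrary topological vector space.
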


If $\pi: G\to \Gl(E)$ is a group homomorphism, then we say $\pi$ is  {\it locally 
equicontinuous} if condition (b) in the Lemma above is satisfied.

\begin{rem} (a)  Let $\pi: G\to \Gl(E)$ be homomorphism. If $E$ is a Banach space, 
then, in view of 
the uniform boundedness principle, the following statements are equivalent:
\begin{itemize}
\item The action $G\times E\to E$ is continuous, i.e. $(\pi, E)$ is a representation. 
\item For all $v\in E$ the orbit map $\gamma_v$ is continuous. 
\end{itemize}
In the existing literature one mostly considers representation on Banach spaces and uses 
the second bulleted item as a definition for representation. 
Let us emphasize that these two notions will be 
different in general. 
\par\nin (b) Suppose that $(\pi, E)$ is a representation on a
semi-normed space $E$. Then all 
operator norms  of $\pi(g)$ are locally bounded in $g\in G$.  
\end{rem}

\par If $(\pi, E)$ is a representation, then we call a continuous  semi-norm 
$p$ 
on $E$ a {\it $G$-continuous semi-norm}, if $G\times (E, p)\to (E, p)$ is 
continuous.  Here $(E, p)$ stands 
for the vector space $E$ endowed with the topology induced from 
the semi-norm $p$.

\begin{rem} \label{rem=111}Let $p$ be a $G$-continuous semi-norm on a 
representation module $E$ and $E_p$ be the completion 
of $(E, p)$. As $G\times (E,p)\to (E,p)$ is continuous, 
we obtain a representation of $G$ on the Banach space $E_p$. 
\end{rem}

Let $(\pi, E)$ be a representation of $G$. If $E$ is a Banach (Hilbertian, Fr\'echet)  space,   then we 
speak of a {\it Banach} ({\it Hilbertian}, {\it Fr\'echet}) {\it representation} of $G$. 

\subsection{Growth of a representation}

In this section $G=(G,[s])$ denotes  a Lie group with scale structure $[s]$.  

\par Let  $(\pi, E)$ be a representation of $G$ on a semi-normed space $(E, p)$. 
Then 
$$s_\pi: G\to \R^+, \ \ g\mapsto \|\pi(g)\|$$
is a scale.  We call $s_\pi$ the scale associated to $(\pi, E)$. 
We will say that $(\pi, E)$ is $[s]$-{\it bounded} provided $s_\pi  \preccurlyeq s$. 
\par A $G$-continuous semi-norm $p$ on a representation module $E$ will be 
called $[s]$-{\it bounded} provided  $G\times (E, p)\to (E, p)$ is a 
$[s]$-bounded representation.

\begin{dfn} A representation $(\pi, E)$ of $G=(G, [s])$ will be called an  {\it $F$-representation} provided 
$E$ is a Fr\'echet space whose topology 
is induced by  a countable family of $G$-continuous 
$[s]$-bounded semi-norms $(p^n)_{n\in\N}$.
\end{dfn}
Let us emphasize that Fr\'echet spaces are complete topological vector spaces.
In the context of $F$-representations this will play an important role
when it comes to vector valued integration.

\begin{ex} (a) If $[s]$ is the maximal scale structure, then 
any representation on a semi-normed space is $[s]$-bounded.  
\par\nin (b) Let $G=(\R,+)$ endowed with the algebraic scale structure. Then 
a character $\pi: G\to \C^*$ is $[s]$-bounded if and only if $\pi$ is unitary. 
\end{ex}

\begin{rem}{\rm (Fr\'echet representations versus $F$-representations)} Let us emphasize 
that a Fr\'echet representation is not necessarily an $F$-representation. Here are some 
examples: 
\par\nin (a) Let $G$ be non-compact connected Lie group and $E=C(G)$ be the space of continuous function 
on $G$. Then $E$, endowed with the topology of compact convergence, becomes a
Fr\'echet space. Let $\pi$ denote either the left or right regular action of $G$ on $E$. 
Then $(\pi, E)$ is a Fr\'echet but not an $F$-representation. 
\par\nin (b) Let $G=\Sl(2,\R)$, $B<G$ the standard Borel subgroup 
and $\chi: B\to \C^*$ a character. Let $E$ be the $G$-module 
of  hyperfunction 
sections of the line bundle $G\times_B  \C_\chi\to G/B$. 
As a topological vector space $E$ is isomorphic to the hyperfunctions 
on the circle, hence a Fr\'echet space. This yields a Fr\'echet representation 
which is not an $F$-representation. 
\par More generally,  if $(\pi, E)$ is the maximal globalization of a Harish-Chandra module 
(in the sense of Schmid, see \cite{WS}), then $(\pi, E)$ is a Fr\'echet representation but not an $F$-representation.   
\end{rem}

Recall that the category of Fr\'echet spaces is closed under taking 
closed subspaces and quotients by closed subspaces. The same holds for the
category of  F-representations.  We record this fact, but skip 
the very easy proof:

\begin{lem} \label{lem=222} Let $(\pi, E)$ be an $F$-representation
and $H\subset E$ a closed $G$-invariant subspace. Then the 
corresponding sub and  quotient representation on $H$, resp. $E/H$, are 
$F$-representations. 
\end{lem}

\subsubsection{Representations of moderate growth}
In \cite{Cas} Casselman calls a Fr\'echet representation $(\pi, E)$ of a real reductive group $G$ of  {\it moderate growth}
provided for any semi-norm $p$ on $E$ there exists a semi-norm $q$
on $E$ and an integer $N>0$ such that 
$$p(\pi(g)v)\leq  \|g\|^N  q(v)\qquad (g\in G)\, .$$
For an arbitrary Lie group $G=(G,[s])$ one thus might call 
a representation of {\it moderate growth} if for any semi-norm $p$ on $E$ there exists a semi-norm $q$
on $E$ and an integer $N>0$ such that 
$$p(\pi(g)v)\leq s(g)^N  q(v)\qquad (g\in G)\, .$$

\begin{lem} Let $(\pi, E)$ be a Fr\'echet representation of the Lie group 
$(G, [s])$. Then the following statements are equivalent: 
\begin{enumerate} 
\item $(\pi, E)$ is of moderate growth. 
\item $(\pi, E)$ is an $F$-representation. 
\end{enumerate}
\end{lem}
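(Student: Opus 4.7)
The plan is to prove the two implications separately, with the nontrivial content lying in (i) $\Rightarrow$ (ii).

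For (ii) $\Rightarrow$ (i), start from a defining family $(\rho^n)_{n\in\N}$ of $G$-continuous semi-norms. By Remark \ref{rem=111}, each $\rho^n$ induces a Banach representation on the completion of $E/\ker(\rho^n)$, so the growth bound (\ref{G2}) produces constants $C_n,N_n>0$ with
$$\rho^n(\pi(g)v) \ \leq \ C_n e^{N_n d(g)} \rho^n(v) \qquad (g\in G,\ v\in E).$$
Any continuous semi-norm $p$ on the Fr\'echet space $E$ is dominated by some $C(\rho^{n_1}+\cdots+\rho^{n_k})$, and combining the corresponding bounds yields moderate growth.

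For the harder direction (i) $\Rightarrow$ (ii), fix any defining countable family $(p_n)_{n\in\N}$ of continuous semi-norms for the Fr\'echet topology of $E$. Moderate growth supplies, for each $n$, an integer $N_n$ and a continuous semi-norm $q_n$ on $E$ with $p_n(\pi(g)v)\leq e^{N_n d(g)}q_n(v)$. The idea is to \emph{sup-average along orbits}: define
$$\rho^n(v) \ := \ \sup_{g\in G} e^{-N_n d(g)}\, p_n(\pi(g)v).$$
Evaluating at $g=\1$ and invoking the moderate growth bound gives $p_n(v)\leq \rho^n(v)\leq q_n(v)$, so $\rho^n$ is finite, a semi-norm, and the family $(\rho^n)$ defines the original Fr\'echet topology.

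It remains to verify that each $\rho^n$ is $G$-continuous, which by Lemma \ref{repcrit} reduces to local equicontinuity and continuity of orbit maps. Subadditivity of $d$ gives $d(g)\leq d(h)+d(h^{-1}g)$, so the change of variable $g\mapsto hg$ in the supremum yields $\rho^n(\pi(h)v)\leq e^{N_n d(h)}\rho^n(v)$, which is locally bounded in $h$. Applying the same estimate to the vector $\pi(h)v-v$ yields $\rho^n(\pi(h)v-v)\leq q_n(\pi(h)v-v)$, and the right side tends to $0$ as $h\to\1$ by continuity of $q_n$ together with continuity of orbit maps in the original Fr\'echet topology of $E$. This gives orbit continuity at $\1$, which propagates to every point by group translation together with local equicontinuity.

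The main obstacle is the construction of the $\rho^n$: the sup-averaging must simultaneously achieve finiteness (which forces the exponent $N_n$ to match the growth rate of $p_n$), equicontinuity of the $G$-action (which relies on subadditivity of $d$), and continuity of orbit maps (which invokes moderate growth a second time). The exact form of the definition is dictated by the interplay of these three requirements.
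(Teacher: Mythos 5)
Your proof is correct and follows essentially the same route as the paper's: the easy direction uses Remark~\ref{rem=111} together with the weight bound, and the substantive direction is carried by the sup-averaged semi-norm $\rho^n(v)=\sup_g e^{-N_n d(g)}p_n(\pi(g)v)$, which is identical to the paper's $\tilde p$. You give a more explicit verification of $G$-continuity (local equicontinuity via subadditivity of $d$, plus orbit continuity via $\rho^n\leq q_n$), where the paper compresses this into a single sentence; this is a useful expansion but not a different argument.
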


\begin{proof} By definition any $F$-representation is of moderate growth. 
\par Conversely, assume that $(\pi,E)$ is of moderate growth and let 
$p, q$  and $N>0$ be as in the definition above. 
Then 
$$\tilde p (v):= \sup_{g\in G} {p(\pi(g)v )\over s(g)^N}$$ 
defines a semi-norm on $E$ such that 

\begin{itemize}
\item $p\leq \tilde p\leq q$. 
\item $\tilde p(\pi(g)v)\leq s(g)^N  \tilde p(v)$ for all 
$g\in G$.   
\end{itemize}
The first bulleted item implies that the semi-norms $\tilde p$ define the topology on 
$E$. The  second bulleted item yields that $\tilde p$ 
is $G$-continuous and $[s]$-bounded. 

\end{proof}

\subsection{Smooth vectors and smooth representations}

\subsubsection{Smooth vectors}

\begin{dfn} Let $(\pi,E)$ be an  $F$-representation of $G$. We call a vector 
$v\in E$ {\it smooth} if 
$\gamma_v$ is a smooth map. We denote  by $E^\infty$ the 
vector space of all smooth vectors. 
\end{dfn}

\begin{rem} It is common to define smooth vectors for arbitrary representations
$(\pi, E)$: one says $v\in E$ {\it smooth} provided 
$\gamma_v$ is a smooth map \cite{Bruhat}.  If $(\pi, E)$ is not an 
$F$-representation then this leads to counterintuitive examples: 
\begin{enumerate}
\item The 
regular action of a compact group $G$ on the space of distributions 
$E=C^{-\infty}(G)$ would be smooth. More generally, if $(\pi, \H)$ is a 
Hilbert representation of $G$ and $\H^{-\infty}$ the topological 
dual of $\H^\infty$, then $\H^{-\infty}$ would define a smooth
representation.
\item Let $(\pi, E)$ be a Banach representation and $E^\omega$ the space 
of analytic vectors with its natural inductive limit topology. The dual 
strong dual $E^{-\omega}$ of $E^\omega$, the space of hyperfunction, 
is a Fr\'echet space. The induced action of $G$ on $E^{-\omega}$
would be a smooth representation. 
\end{enumerate}
\end{rem}

Note that  ${\mathcal U}(\gf)$, the universal 
enveloping algebra of the Lie algebra $\gf$ of $G$, acts naturally on $E^\infty$. 
As customary we denote this algebra action by $d\pi$. 
\subsubsection{Sobolev semi-norms}
For a continuous semi-norm $p$ on $E$ we wish to associate a family of Sobolev semi-norms 
$(p_k)_{k\in \N_0}$. We proceed as follows: Fix a basis $X_1, \ldots, X_n$ of $\gf$. 
For all $k\in \N_0$ and $v\in E^\infty$ we set  

$$p_k(v):=\Big[\sum_{m_1+\ldots+m_n\leq k} p(d\pi (X_1^{m_1}\cdot \ldots \cdot X_n^{m_n})v)^2 \Big]^{1\over 2}$$
and refer to $p_k$ as a $k$-th Sobolev norm of $p$. 

\begin{rem} (a) The definition of $p_k$ depends on the choice of the basis $X_1, \ldots, X_n$. However, 
a different basis yields an equivalent semi-norm.
\par\nin (b) If $p$ is $G$-continuous (resp. Hermitian), then so is $p_k$ for any $k\in \N_0$. 
\end{rem}

\subsubsection{Smooth representations}
In the sequel we view $E^\infty$ as a topological vector space with the
locally convex topology induced by all Sobolev semi-norms.  

\begin{dfn} An $F$-representation $(\pi, E)$ is called {\it smooth}
if $E=E^\infty$ holds as topological vector spaces. 
\end{dfn}

Let us denote by $C^\infty(G, E)$ the space of $E$-valued smooth 
functions. We endow $C^\infty(G, E)$ with the topology of smooth compact 
convergence.  We let $G$ act on $C^\infty(G, E)$ as 

$$g\cdot f(x):= \pi(g) f(g^{-1}x) \qquad (g, x\in G; f\in C^\infty(G, E)$$ 
and note that this action is continuous. 
Hence the space of $G$-invariants $C^\infty(G, E)^G$ is a closed subspace 
of $C^\infty(G,E)$.  The following standard fact is found in \cite{Bruhat}. 

\begin{lem} Let $(\pi, E)$ be an  $F$-representation of $G$.
Then the  map 
$$E^\infty \to C^\infty(G, E)^G, \ \ v\mapsto \gamma_v$$
is an isomorphism of topological vector spaces. 
In particular, $E^\infty$ is complete. 
\end{lem}

In the sequel we call a smooth $F$-representation simply 
$SF$-representation. 

\begin{cor} Suppose that $(\pi, E)$ is an F-representation. Then 
$(\pi,E^\infty)$ is an SF-representation of $G$.\end{cor}

\subsection{Integration of representations and algebra actions}

Let us  denote by $\M(G)$ the Banach space of complex Borel measures
on $G$.  We recall that $\M(G)$ carries a natural Banach algebra structure
by convolution of measures:
$$(\mu*\nu) (f):= \mu_x (\nu_y(f(yx)))\,  $$
for $\mu, \nu\in \M(G)$ and $f\in C_c(G)$. 
We denote by $\M_c(G)\subset \M(G)$ the subalgebra 
of compactly supported complex measures. 

\begin{rem} The left action of $G$ on $G$ induces an action of $G$ 
on $\M(G)$ by isometries. This natural action is not continuous, i.e.
does not define a representation.   
Call a measure $\mu$ {\it continuous} provided the orbit map 
$$G\to \M(G), \ g \mapsto (\lambda_g)_* \mu $$
is continuous. Here $\lambda_g(x)= gx $ for $x\in G$ is the left translate. 
Let us denote by $\tilde \M(G)$ the space of continuous complex measures. 
If we fix a left Haar measure $dg $ on $G$, 
then the  map 
$$L^1(G) \to \tilde \M(G), \ \ f\mapsto f\cdot dg $$
provides an isomorphism of Banach algebras. 
\end{rem}

\par If $(\pi,E)$ is  representation of $G$ on a complete topological 
vector space,  then we denote by $\Pi$ the corresponding algebra representation of $\M_c(G)$: 

\begin{equation} \label{g4} \Pi(\mu)v=\int_G \pi(g)v \ d\mu(g) 
\qquad (\mu \in \M_c(G) \, v\in E)\, .\end{equation}
Note that the defining vector valued integral converges 
as $E$ is complete. 

\par Depending on the type of the representation $(\pi, E)$ larger algebras 
as $\M(G)$ might act on $E$. For instance if $(\pi, E)$ is a
bounded Banach representation, then $\Pi$ extends to a representation of 
$\M(G)$. The natural algebra acting  on an $F$-representation
is the algebra of rapidly decreasing complex measures  on $G$. 

\par The space  of {\it rapidly decreasing continuous complex measures} 
on $G$ is defined as 

$$\cR(G):=\{ \mu \in \tilde \M(G)\mid (\forall n\in \N) \  s(g)^n \in L^1(G, |\mu|)\}\, .$$
Let us emphasize that $\cR(G)$ only depends on the scale structure $[s]$. 
We write $L\times R$ for the regular representation 
of $G\times G$ on functions  on $G$: 
$$(L\times R)(g_1, g_2) f (g):= f(g_1^{-1} g g_2)$$
for $g, g_1, g_2\in G$ and $f\in C(G)$.
The following properties of $\cR(G)$ are easy to verify:
\begin{itemize}
\item $(L\times R, \cR(G))$ is an $F$-representation of $G\times G$. 
\item $\cR(G)$ is a Fr\'echet algebra under convolution. 
\item Any F-representation $(\pi, E)$ of $G$ integrates to a continuous 
algebra representation 
\begin{equation}\label{g3} \cR(G) \times E \to E, \ \ (\mu, v)\mapsto 
\Pi(\mu)v, \end{equation}
i.e. the $E$-valued integrals in (\ref{g4}) converge absolutely, the bilinear map (\ref{g3}) is 
continuous and $\Pi(\mu*\nu)=\Pi(\mu)\Pi(\nu)$ holds for all 
$\mu, \nu\in \cR(G)$. 
\end{itemize}

\par For $u\in \U(\gf)$ we will abbreviate 
$L_u:=dL(u)$ and likewise $R_u$ for the derived representations. The smooth vectors 
of $(L\times R, \cR(G))$ constitute the {\it Schwartz space} 

\begin{align*} \S(G):=\{ f\cdot dg \mid f\in C^\infty(G); 
& \forall u, v\in \U(\gf), 
\forall n\in \N\\ 
&s(g)^nL_u R_v f \in L^1(G)\}\, .
\end{align*}
It is clear that $\S(G)$ is a Fr\'echet subalgebra of $\cR(G)$
(see \cite{W}, Sect. 7.1 for a discussion in a wider context if $G$ is reductive). 

\begin{rem}\label{rem=r=s} Suppose that $[s]$ is the maximal or algebraic scale structure on $G$. 
Then for a function $f\in \cR(G)$ the following 
assertions are equivalent: (1) $f$ is in $\S(G)$, i.e. $f$ is 
$L\times R$-smooth; (2) $f$ is $R$-smooth; (3) $f$ is $L$-smooth. 
In fact,  a left derivative $L_u$ at a point $g\in G$ 
is the same as a right derivative $R_{\Ad(g)^{-1}u}$ at $g$. Now observe 
that $\|\Ad(g)\|\leq C s(g)^C $ for all $g\in G$ and a fixed $C>0$. 
\end{rem}

The natural algebra to consider for a SF-representation is 
the Schwartz algebra $\S(G)$ (see Proposition \ref{p=eq} below).  

\begin{rem} If $(\pi, E)$ is a smooth Fr\'echet-representation, then\linebreak  
$\Pi(C_c^\infty(G))E=E$ by Dixmier-Malliavin \cite{DM}. 
Assume in addition that $(\pi, E)$ is a  SF-representation. As $\cR(G)$ acts on $E$ and 
$\cR(G)\supset \S(G)\supset C_c^\infty(G)$, 
we deduce  that $\Pi(\S(G))E=\Pi(\cR(G))E=E$. 
\end{rem}

If $\A$ is an algebra without $\1$ and $M$ is an $\A$-module, then we call $M$ {\it non-degenerate} provided 
$\A M=M$.

\begin{prop}\label{p=eq} Let $G$ be a Lie group. Then the following categories are equivalent: 
\begin{enumerate} 
\item The category of SF-representations of $G$. 
\item The category of non-degenerate continuous algebra representations of $\S(G)$ on 
Fr\'echet spaces.
\end{enumerate}
\end{prop}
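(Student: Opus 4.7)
The plan is to exhibit mutually inverse functors between the two categories. One direction is given by the integrated action (\ref{g4}): its continuity is the third bullet in the list preceding the proposition, and the non-degeneracy $\Pi(\S(G))E = E$ is exactly the Remark immediately preceding the proposition (Dixmier--Malliavin).

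For the converse, fix a non-degenerate continuous $\S(G)$-module $E$ on a Fr\'echet space and choose an approximate identity $(\phi_n) \subset C_c^\infty(G)$ peaked at $\1$. The crucial identity, immediate from left-invariance of Haar measure, is $L_g \phi * f = L_g(\phi * f)$ for all $\phi, f \in \S(G)$ and $g \in G$. Given $v \in E$, non-degeneracy lets one write $v = \sum_{i=1}^N \Pi(f_i) w_i$ with $f_i \in \S(G)$ and $w_i \in E$, and then
\[
\Pi(L_g \phi_n) v \;=\; \sum_i \Pi\bigl(L_g(\phi_n * f_i)\bigr) w_i \;\longrightarrow\; \sum_i \Pi(L_g f_i) w_i
\]
as $n \to \infty$, using $\phi_n * f_i \to f_i$ in $\S(G)$ together with continuity of $L_g$ and of $\Pi$. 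Define $\pi(g)v$ to be this limit; independence of the decomposition is forced because the left-hand side depends only on $v$. Banach--Steinhaus for Fr\'echet spaces, applied to the operators $\Pi(L_g \phi_n) : E \to E$, shows $\pi(g)$ is continuous, and the group law $\pi(g_1 g_2) = \pi(g_1)\pi(g_2)$ follows from $L_{g_1 g_2} = L_{g_1} L_{g_2}$.

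To verify the SF-axioms I would invoke Lemma \ref{repcrit}: orbit-map continuity on the dense subspace $E = \Pi(\S(G))E$ is inherited from $g \mapsto L_g f$ being continuous $G \to \S(G)$, while local equicontinuity follows from a second Banach--Steinhaus once one checks, by the same decomposition argument, that $\{\pi(g) v : g \in Q\}$ is bounded for each $v$ and compact $Q$. The lemma identifying moderate growth with the F-property then upgrades $\pi$ to an F-representation. For smoothness, $g \mapsto \pi(g) \Pi(f) w = \Pi(L_g f) w$ is smooth because $g \mapsto L_g f$ is smooth $G \to \S(G)$ (Remark \ref{rem=r=s}), so $E^\infty = E$ as sets with $d\pi(u)\Pi(f)w = \Pi(L_u f) w$; a third Banach--Steinhaus argument applied to $\Pi(L_u \phi_n)$ shows $d\pi(u) : E \to E$ is continuous, so the Sobolev topology agrees with the topology of $E$.

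That the two constructions are mutually inverse reduces to the identity $\int f(g) (L_g h)\, dg = f * h$ in $\S(G)$ (giving $\tilde\Pi = \Pi$ in one direction) and to $\pi(g)\Pi(\phi_n) v \to \pi(g)v$ (recovering the original group action in the other). Morphisms transport because any continuous $\S(G)$-equivariant map $T$ commutes with each $\Pi(L_g \phi_n)$, hence with $\pi(g)$ by passage to the limit. The main obstacle throughout is the well-definedness of $\pi(g)$; once the pointwise-limit formula $\pi(g)v = \lim_n \Pi(L_g \phi_n)v$ is established, all remaining verifications reduce to Banach--Steinhaus in various guises together with the convolution identity $L_g \phi * f = L_g(\phi * f)$.
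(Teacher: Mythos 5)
Your construction goes the opposite way from the paper's and is considerably more explicit: the paper forms the projective tensor product $\S(G)\,\hat\otimes_\pi E$, puts the left--regular $G$-action on the first factor, and observes that the multiplication map realizes $E$ as a quotient of this $SF$-module; Lemma~\ref{lem=222} then transfers all the required structure ($F$-property and smoothness) to the quotient in one stroke. Your hand-rolled version --- defining $\pi(g)v := \lim_n \Pi(L_g\phi_n)v$ and verifying continuity, the group law, smoothness and local equicontinuity by repeated Banach--Steinhaus --- does establish that $(\pi,E)$ is a smooth Fr\'echet representation, and it spells out the well-definedness point (that $\Pi(f)v=0$ forces $\Pi(L_gf)v=0$) which the paper leaves implicit in the claim that the kernel of the multiplication map is $G$-invariant. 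Your mutual-inverse and morphism-transport checks are also sound.

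The gap is in the passage to the $F$-property. You write that ``the lemma identifying moderate growth with the $F$-property then upgrades $\pi$ to an $F$-representation,'' but that lemma only says that, \emph{for a Fr\'echet representation}, moderate growth and the $F$-property are equivalent; it does not supply either one. Your local-equicontinuity argument gives $\sup_{g\in Q}\|\pi(g)\|$ bounded for compact $Q$, which is continuity, not moderate growth. If you try to get moderate growth from the decomposition $v=\sum_i\Pi(f_i)w_i$ you obtain a bound of the form $p(\pi(g)v)\le e^{Nd(g)}\sum_i r(f_i)q(w_i)$, and the right-hand side depends on the chosen decomposition of $v$ rather than on a seminorm of $v$; making it a seminorm requires taking an infimum over decompositions and invoking the open mapping theorem to see that this infimum defines the original topology --- which is exactly the quotient-of-a-tensor-product argument you were trying to avoid. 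So either reinstate the paper's quotient construction at this one point, or explicitly carry out the infimum/open-mapping argument; as written, the $F$-property (hence the $SF$ conclusion) is not established.
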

 
\begin{proof} We already saw that every SF-representations $(\pi, E)$ gives rise to a non-degenerate continuous 
algebra representation $(\Pi, E)$ of $\S(G)$. 
Conversely let $(\Pi, E)$ by a continuous non-degenerate algebra representation of $\S(G)$. 
Let us denote by $S(G)\hat \otimes_\pi E$ the projective tensor product of $\S(G)$ and $E$. Clearly 
$S(G)\hat \otimes_\pi E$ is a Fr\'echet space and we define an SF-module structure for $G$ by 
$$ g\cdot (f\otimes v):= L(g)f\otimes v \qquad (g\in G, f\in \S(G), v\in E)\, .$$
As $\Pi$ is non-degenerate, $E$ becomes a quotient of $S(G)\hat \otimes_\pi E$  and Lemma \ref{lem=222} completes the proof. 
\end{proof}

\section{Basic representation theory II: Banach representations}

In this section we investigate Banach representations, in particular 
we are interested in the fine Sobolev structure of smooth vectors. 
We view Banach representations as appropriate local models for 
F-representations and point out that the main results in this section 
hold for F-representations as well.

\subsection{Contragredient  representations}\label{DR} 
Throughout this section we let $E$ denote a Banach space. We denote 
by $E^*$ its topological dual.  
We fix a norm $p$ on $E$ and note that $E^*$ is Banach space 
with respect to the dual norm 

$$p^*(\lambda):=\sup_{p(v)\leq 1} |\lambda(v)| \qquad (\lambda\in E^*)\, .$$

\par Let $(\pi, E)$ be a Banach representation of $G$ and 
consider the group homomorphism 

$$\pi^*: G \to \Gl(E^*), \ \ \pi^*(g)(\lambda):=\lambda \circ \pi(g^{-1})\, .$$
From the local equicontinuity of $\pi$ the local equicontinuity of $\pi^*$
follows (see \cite{T}, Ch. 19). However, orbit maps for $\pi^*$ might fail to
be continuous as we will see in an example below.

\par The fact that orbit maps for $\pi^*$ might fail to be continuous
can be dealt with in the following way. Let us consider 
the subspace $\tilde E\subset E^*$  consisting of those 
vectors 
$\lambda\in E^*$ for which 
the orbit map $\gamma_{\lambda}: G\to E^*$ is continuous
(we call this space the continuous dual). Lemma \ref{repcrit}
implies that $\tilde E$ is a closed $G$-invariant subspace of $E^*$.
Following \cite{Bruhat} we restrict the action of $G$ to 
this subspace and obtain  a representation $(\tilde \pi, \tilde E)$ 
that we call the 
{\it contragredient representation}
of $(\pi, E)$.

\par Let us denote by $\H(G)=C_c^\infty(G)\cdot dg$ the algebra 
of smooth compactly supported measures on $G$. 
The standard technique of Dirac approximation
yields:

\begin{lem} Let $(\pi, E)$ be a Banach representation of $G$. Then the following
assertions hold:
\begin{enumerate}
\item For all $\mu\in \H(G)$ the operator $\Pi^*(\mu)$ is defined on $E^*$ 
and maps $E^*$ into $\tilde E$. 
\item The spaces $\Pi^*(\H(G))(\tilde E)$ and $\Pi^*(\H(G))E^*$ are 
dense in $\tilde E$. 
\end{enumerate}
\end{lem}

\par We write $\tilde p$ for the restriction of $p^*$ to $\tilde E$.
Consider the natural isometric morphism $E\to E^{**}$. 
The inclusion $\tilde E  \to E^*$ yields a contractive 
projection $E^{**} \to (\tilde E)^*$ and hence a contractive map  
$i: E\to \tilde {\tilde E}\subset (\tilde E)^*$. 
 
\begin{prop} \label{neq} Let $(\pi, E)$ be a Banach representation 
of $G$ and $p$ be a defining norm  on $E$. Then 
the natural morphism 
$i: E\to \tilde {\tilde E}$ is an isometric embedding.  
\end{prop}

\begin{proof} We need to show that 

\beq \label{inorm} {\tilde p}^* (v) =p (v) \qquad (v\in E)\, .\eeq
As $E^{**} \to (\tilde E)^*$ is contractive, the inequality 
``$\leq$'' follows. 
As for the reverse inequality let us fix a unit vector $v\in E$. 
By Hahn-Banach, we find $\lambda\in E^*$ with $p^*(\lambda)=1$ such that 
$\lambda(v)=1$. Let $\e>0$ and choose a non-negative normalized 
$\mu\in \H(G)$ such that 
$p(\Pi(\mu)v -v)<\e$.  Set $\xi:=\Pi^*(\mu^\vee)(\lambda)$, where 
$\mu^\vee$ is the push-forward of $\mu$ under $g\mapsto g^{-1}$.  
Then  $\xi\in \tilde E$  by the previous lemma. 
If we choose $\supp (\mu)$ small enough such that $\|\xi\|\leq 1+\e$, then 
$$|\xi(v)|= |1 + \lambda (\Pi(\mu)v - v)|\geq 1 -\e$$
and the proof is complete. 
\end{proof}

\par Let us call a Banach representation $(\pi, E)$ 
{\it reflexive} if the morphism 
$i: E\to \tilde{\tilde E}$ is an isomorphism. 

Note that Proposition \ref{neq} shows that 
$(\pi, E)$ is reflexive if $E$ is reflexive 
(see also \cite{War}, Cor. 4.1.2.3). The converse is not true 
as the following example shows.

\begin{ex} Let $G$ be a compact Lie group and 
$(\pi, E)$ be the left regular representation 
of $G$ on $E=L^1(G)$. Then it is easy to check that: 
\begin{enumerate} \item  $\tilde E = C(G)$ while $E^*= L^\infty(G)$. 
\item $\tilde{C(G)}=L^1(G)$ while $C(G)^*$ is the space of Radon measures on 
$G$. 
\end{enumerate}
This shows that a representation $(\pi, E)$ can be reflexive while the Banach 
space $E$ is not reflexive. 
\end{ex}

\subsection{Induced Sobolev norms}

Our definition of smooth vectors for a representation $(\pi, E)$ 
was a geometric one: we said $v\in E$ is smooth if the orbit map 
$\gamma_v: G\to E$ was smooth.  Further we put a Fr\'echet topology 
on $E^\infty$ via the Sobolev norms $p_k$ which turned $(\pi, E^\infty)$
into an $SF$-module for $G$.

\par In this paragraph  we will introduce a new class of Sobolev norms 
on $E^\infty$ which are more quantitative and easier to work with.  

\par To begin with we associate to  $\lambda\in E^*$ and $v\in E$ 
the matrix coefficient
$$m_{\lambda, v}(g):= \lambda(\pi(g)v) \qquad (g\in G)$$
which is a continuous function on $G$ and smooth provided 
$v$ is smooth. 
We fix a relatively compact neighborhood $B$ of $\1$ in $G$
and a test function $\phi$ on $G$ which is supported in $B$ and 
positive near $\1$. 
 
\par We denote by $\F$ the space of test function on $G$ which are 
supported in $B$. For a continuous semi-norm $q$ on $\F$ 
we define a semi-norm $p_q$ on $E^\infty$, the {\it semi-norm induced by $q$}, by 

$$p_q(v):=\sup_{p^*(\lambda)\leq 1}  q(\phi \cdot m_{\lambda, v}) \qquad
(v\in E^\infty)\,.  $$
Note that the choices of both $B$ and $\phi$ for the
  definition of $Sp_s$ are irrelevant; other choices yield equivalent norms. 
\par Typical examples we have in mind 
for $q$ are $L^p$-Sobolev norms or semi-norms of the form 
$q(f)=\sup_{g\in C_B} |Df(g)|$ where $D$ is a differential operator and $C_B$ a 
compact subset of $B$.  
Since $m_{\lambda, v}(g)=m_{\lambda, \pi(g)v}(\1)=m_{\tilde \pi(g^{-1})\lambda, v}(\1)$ we conclude that 
the semi-norms $p_q$ are $G$-continuous. 
In the special case where $q$ is the $L^2$-Sobolev norm on $\F$ for order $s\in \R$
we write $Sp_s$ instead of $p_q$.

\begin{lem}\label{basicsob} Let $(\pi, E)$ be a Banach representation of $G$. 
Then the following assertions hold: 
\begin{enumerate} 
\item For all $k\in \N_0$ there exists a constant $C_k>0$
such that 
$$Sp_k(v)\leq C_k \cdot p_k (v) \qquad (v\in E^\infty)\, .$$
\item  For all $k\in \N_0$ and $s> k+ \dim G/ 2$ 
there exists a constant $c_s>0$
such that 
$$p_k(v) \leq c_s \cdot Sp_s(v) \qquad (v\in E^\infty)\, .$$
\end{enumerate}
\end{lem}

\begin{proof} The first assertion is obvious and the second is 
the standard Sobolev Lemma. 
\end{proof}

\subsubsection{Laplace Sobolev norms}

For our fixed basis $X_1, \ldots, X_n $ of $\gf$ 
we  define a Laplace-element
$$\Delta:=X_1^2 + \ldots + X_n^2 \in \U(\gf)\, .$$ 
If $p$ is a  defining norm of $E$, then we set 

$$^{\Delta} p_{2k}(v):=\Big( \sum_{j=0}^k p( d\pi(\Delta^j) v)^2\Big)^{1\over 2}  \qquad (v\in E^\infty)\, .$$
We will refer to ${}^\Delta p_{2k}$ as the 2k$th$ {\it Laplace-Sobolev norm}  
of $p$. Note that ${}^\Delta p_{2k}$ is equivalent  to a norm  induced from $q$ where 
$$q(f)^2  =\sum_{j=0}^k |\Delta^j f(\1)|^2\, .$$

The next result is an immediate consequence of Lemma \ref{basicsob}
and was motivated by \cite{GKL}, Rem. 5.6 (b). 

\begin{prop}\label{l=gkl} Let $G$ be Lie group and $(\pi, E)$ be a Banach 
representation of $G$.  
Then for all $k\in \N$ there exists 
a $C_k>0$ such that 
$$p_{2k} (v) \leq C_k \cdot {}^\Delta p_{2k + \dim G} (v) \qquad 
(v\in E^\infty)\, .$$
In particular, the topology 
of $E^\infty$ is defined by the family of Sobolev norms 
$({}^\Delta p_{2k})_{k\in \N}$. 
\end{prop}

We can put this into a more general context: For any $s\in \R$ we let 
$E_s$ be the completion of $E^\infty$ with respect to $Sp_s$. 

\begin{lem} For a Banach representation $(\pi, E)$ the following assertions
hold true: 
\begin{enumerate} 
\item $E^\infty=\bigcap_{s>0} E_s$. 
\item For all $s\in \R$ the map 
$$d\pi(\Delta): E_s \to E_{s-2}$$
has closed range. If it is injective, then it is an isomorphism of Banach spaces. 
\end{enumerate}
\end{lem}

\subsection{Action of distributions}

In this paragraph e recall a few facts and notions about compactly supported distributions 
on $G$ and their action on smooth vectors.  

\par Recall that $C^\infty(G)$ carries a natural 
Fr\'echet topology and that its dual, in symbols $\D_c'(G)$, are the 
distributions with compact support.

\par We let $G$ act on  $C^\infty(G)$ by left translation 
in the argument.  
This induces an action of $\U(\gf)$ on $C^\infty(G)$. 
If $u\mapsto u^t$ 
is the canonical anti-automorphism of $\U(\gf)$, we then obtain action 
of $\U(\gf)$ on distributions $\D_c'(G)$ by 

$$(u*T)(f):= T(u^tf) \qquad (u\in \U(\gf), T\in \D'(G), f\in C_c^\infty(G))\, .$$
Note that $\D_c'(G)$ is an algebra under convolution:
for $S, T\in \D_c'(G)$ and $\phi\in C^\infty(G)$ define 
$$(S*T)(\phi):= S_x(T_y(\phi(xy)))\, .$$
By the fundamental theorem of distribution theory every 
$T\in \D_c'(G)$ can be expressed as $T= u* f$ for some $u\in \U(\gf)$ and 
$f\in C_c(G)$. One checks that 
$$\Pi(f)v:= \int_G f(g) \pi(g) d\pi(u)v \ dg \qquad (v\in E^\infty)$$
does not depend on the representation $T= u*f$ and defines
an algebra action of $\D_c'(G)$ on $E^\infty$.

\begin{lem}\label{ereg} {\rm (Elliptic regularity)} Let $B$ be neighborhood 
of $\1$ in $G$. Let $m\in \N$ be such that $2m> \dim G$.
Then 
$$\delta_{\bf 1}= \Delta^m * f_1 + f_2$$
for a $C^{2m -\dim G - 1}$-function $f_1$ and a smooth function $f_2$, both 
supported in $B$. 
\end{lem} 

\begin{proof} 
By local solvability 
and regularity of elliptic PDE (see \cite{H}, Th. 7.3.1 and Cor. 7.3.1) there 
exists a $C^{2m -\dim G -1}$-function $f$ on $G$, smooth on $G\bs\{\1\}$,  
such that $\Delta^m* f=\delta_{\bf 1}$ holds in a small neighborhood 
$V\subset B$  of $\1$ in $G$. 
\par Let $U$ be a relatively compact open neighborhood of $\1$ with
$\oline U\subset V$ and let  $\psi$  be a test function 
with $\psi\mid_U=1$ and $\supp \psi\subset V$. 
Set $f_1:=\psi f$ and $f_2:=\Delta^m(\psi f) - \psi \Delta^m (f) $ and note that $f_1$ and $f_2$ are both supported in $B$ with $f_2$ smooth. 
\end{proof}

\begin{cor}\label{cor=factor} Let $(\pi, E)$ is a Banach representation of $G$. Then
for $u\in \U(\gf)$ and $m\in \N$ as in Lemma \ref{ereg}:
\begin{equation} \label{eeq} v = \Pi(f_1) d\pi(\Delta^m) v + 
\Pi(f_2)v \qquad (v\in E^\infty) \end{equation}
\end{cor}

\subsection{Banach representations for a reductive group}\label{KSOB}
In this paragraph we assume that $G$ is a real reductive group. 
We fix a maximal compact subgroup $K<G$.

\par Recall the notion of Laplace-Sobolev norm ${}^\Delta p_{2k}$  for a
Banach representation $(\pi, E)$. 
For that we fixed a basis $X_1, \ldots, X_n$ to define the Laplacian element $\Delta=\sum_{j=1}^n X_j^2$. 
The choice of the basis is in fact irrelevant and henceforth we will use a specific basis 
which is suitable for us. Such a basis is constructed as follows. Let $\kf$ denote the Lie algebra of $K$ 
and let $\gf=\kf+\pf$ be the associated Cartan decomposition of $\gf$. 
We fix a non-degenerate 
invariant bilinear form $B(X,Y)$ on $\gf$ such that $B$ is negative definite on $\kf$ and positive definite on $\pf$. 
If $\theta:\gf\to\gf$ is the Cartan-involution associated to $\gf=\kf+\pf$, then $\la X,Y\ra=-B(\theta(X), Y)$ is an inner product. 
Our choice of  $X_1, \ldots, X_n$ will be such that $X_1, \ldots X_m$ forms  an orthonormal basis of $\kf$ and 
$X_{m+1}, \ldots, X_n$ is  an orthonormal basis of $\pf$. 

\par With regard to our choice of basis we obtain a central element (Casimir element) in $\cZ(\gf)$ by setting 

$$\cC= - \sum_{j=1}^m X_j^2 +\sum_{j=m+1}^n X_j^2$$
and with $\Delta_\kf:=\sum_{j=1}^m X_j^2$ we arrive at the familiar relation 
\begin{equation} \label{famrel} \cC=\Delta -2\Delta_\kf\, . \end{equation}

\par Let $s\in\N_0$ and define 
the {\it $2s$-th $K$-Sobolev norm} of $p$ by 
$$p_{2s, K}(v)^2:= \sum_{j=0}^s p( \Delta_\kf^s v)^2  \qquad (v\in E^\infty)\, .$$

We claim that $K$-Sobolev semi-norms $p_{s, K}$ can be naturally defined for every 
$s\in \R$.  Heuristically this can be seen as follows: Let $\tf<\kf$ be a
Cartan subalgebra. We fix a notion of positivity on $\tf$ and identify
irreducible representations $\tau$ of $K$  with its highest weight in
$i\tf^*$. For an element $\lambda \in i\tf^*$ we write $|\lambda|$ for its 
Cartan-Killing norm. 
 Now on each $K$-type $E[\tau]$ of $E$ one has 
$$-\Delta_\kf v = (\underbrace{|\tau+\rho_\kf|^2 -|\rho_\kf|^2}_{=:\|\tau\|})v \qquad (v\in E[\tau])$$  
and it is clear how  $\Delta_\kf^s $ should be defined as an operator by breaking a vector $v\in E$ into its 
$K$-Fourier series.  On a more formal level we note that action of $\Delta_\kf^s$ on $E$ is realized by left convolution 
with a distribution $\Theta_s\in C^{-\infty}(K)$ on $K$. Hence the fact that 
$C^{-\infty}(K)$ acts continuously on every SF-module yields our claim.

\begin{prop}\label{lem=K}  Let $(\pi, E)$ be a Banach representation of a real
reductive group $G$. Suppose that one of the following conditions
is satisfied:
\begin{enumerate}
\item The linear map $d\pi(\cC): E^\infty\to E^\infty$ extends to 
a morphism on $E$. 
\item There exists a polynomial $P$ such that 
$P(d\pi(\cC))|_{E^\infty}\equiv 0$. 
\end{enumerate}
Then the topology on the SF-module $E^\infty$ is induced by the 
$K$-Sobolev norms norms $(p_{2n, K})_{n\in \N_0}$. 
\end{prop}

\begin{proof} Assume first that $d\pi(\cC)$ extends to a continuous 
linear operator on $E$. Let $v\in E^\infty$ and note 
that: 

$$p(d\pi(\Delta)v)= p(d\pi(C +2\Delta_{\kf})v)\leq C p(v) + 2 
p(d\pi(\Delta_\kf)v)\, . $$ 
The assertion follows with Proposition \ref{l=gkl}.

\par Assume now that the second condition is staisfied 
and let $d$ be the degree of the polynomial $P$. 
As $\Delta=\cC + 2\Delta_\kf$ we get for $k\in \N$, $k>d$,  that 
$$\Delta^k = \sum_{j=0}^m Q_{j,k}(\cC) \Delta_\kf^j$$
with $Q_{j,k}$ polynomials of degree smaller than $d$. 
For $m$ sufficiently large, Lemma \ref{ereg} yields
$ \delta_{\bf 1}  = \Delta^m * f_1 + f_2$
with $f_2$ smooth and $f_1$ in $C^{2m- \dim G- 1}(G)$ and both compactly
supported. In particular we get for $l\in \N_0$

\begin{align*} \Delta^l &= \Delta^{m+l} *f_1 + \Delta ^l*f_2\\
&= \sum_{j=0}^{m+l} \Delta_\kf^j * Q_{j,m+l} (\cC) * f_1  +\Delta^l*f_2
\end{align*}
With $F_j:= Q_{j, m+l}(\cC)*f_1$ and $F=\Delta^l*f_2$ we thus get 
$$ \Delta^l = \sum_{j=0}^{m+l} \Delta_\kf^j * F_j +F\, .$$
If $m$ is large compared to $l$, the $F_j$ are continuous and $F$ is smooth. 
The assertion follows if we apply this identity to a smooth vector. 
 
\end{proof}

For a representation $(\pi, E)$ of $G$ let us write $E_K^\infty$ for the space of smooth vectors for the $K$-representation $\pi\mid_K$. 

\begin{cor}\label{lem=K2} Let $(\pi, E)$ be a Banach representation of a real
reductive group $G$. Suppose that one of the conditions in Proposition
\ref{lem=K}
holds. Then $E^\infty = E_K^\infty$.
\end{cor}

\begin{proof} Set $V:=E^\infty$. 
We claim that $V$ is dense in the 
Fr\'echet space $E_K^\infty$. 
\par In order to prove the claim we first note that 
the Garding subspace
$\Pi_K(C^\infty(K)\cdot dk ) E$ is dense in $E_K^\infty$. Let $\mu \in C^\infty(K)
\cdot dk$ be a smooth measure and $v=\Pi_K(\mu)u$ for some $u\in E$. 
Let $(\nu_k)_{k\in \N} \subset C_c^\infty(G)\cdot dg$ be a Dirac sequence so
that  $u_k:=\Pi(\nu_k)u \to u$ in $E$. Then $v_k:=\Pi_K(\mu)u_k$ is in
$E^\infty$ and converges to $v$ in $E_K^\infty$, establishing our claim.

\par The claim implies that $E_K^\infty$ is the completion of $V$ 
with respect to $(p_{2n, K})_{n\in \N_0}$ and the conclusion follows with 
Proposition \ref{lem=K}.
\end{proof}

\section{Harish-Chandra modules I: algebraic facts}

In this section we will review some central algebraic facts about Harish-Chandra modules.
This is followed by a discussion of basic topological properties of their globalizations
in the following section.  
\par From now on we assume that $G$ is a linear reductive 
group. Let us fix  a maximal compact subgroup $K$ of $G$. 
   
\medskip  By a $K$-module $V$ we shall understand a complex vector space endowed with 
linear algebraic action of $K$, that is $V$ is a union of 
finite dimensional algebraic $K$-representations.

\par If  $(\pi, E)$ is a representation of $K$, then we denote by $E^{K-{\rm fin}}$
the $K$-module consisting of $K$-finite vectors. 
\par We call a $K$-module $E$ {\it weakly admissible}
provided for all finite dimensional representations $(\tau, W)$ of $K$ the 
multiplicity space $\Hom_K(W, E)$ is finite dimensional. 
\par We call a representation $(\pi, E)$ of $G$ {\it weakly admissible} provided  
$E^{K-{\rm fin}}$ is a weakly admissible $K$-module.

\par By a $(\gf, K)$-module $V$ we understand a module for $\gf$ and 
$K$ such that: 
\begin{itemize} 
\item The derived action of $K$ coincides with the 
action of $\gf$ restricted to $\kf:={\rm Lie} K$. 
\item The actions are compatible, i.e.
$$k \cdot X\cdot  v = \Ad(k)X \cdot k \cdot v$$
for all $k\in K$, $X\in \gf$ and $v\in V$. 
\end{itemize}

\begin{rem} (a) If  $(\pi, E)$ is a weakly admissible Banach representation of $G$, then 
$E^{K-{\rm fin}}$ consists of 
smooth vectors and is stable under $\gf$ --  in other words $E^{K-{\rm fin}}$ is a
weakly admissible $(\gf, K)$-module.
\par\noindent (b) Let us emphasize that a weakly admissible $(\gf,K)$-module
is not necessary finitely generated as a $\gf$-module. For example 
the tensor product of two representations of the holomorphic discrete series 
for $(\gf, K)$=$(\sl(2,\R), \SO(2, \R))$ is admissible but not finitely generated as a 
$\gf$-module. 
\end{rem} 

Let us denote by $\cZ(\gf)$ the center of $\U(\gf)$ and by 
$\spec(\cZ(\gf))$ its spectrum, i.e. the set of all algebra characters
$\cZ(\gf)\to \C$.  A $\U(\gf)$-module $V$ will be called $\cZ(\gf)$-finite provided 
there exists an ideal $I\triangleleft \cZ(\gf)$ of finite codimension 
which annihilates $V$, or, equivalently, provided there exists  
$\chi_1, \ldots, \chi_n\in \spec (\cZ(\gf))$ and $d\in \N$ such that 
$\prod_{j=1}^n (\chi_j(z) -z)^d v =0$ for all $v\in V$ and $z\in \cZ(\gf)$.

\par We denote by  $\Irr (\gf, K)$ be the set of equivalence classes of 
irreducible $(\gf, K)$-modules. As customary we will not distinguish between 
equivalence classes and their representatives. As every $V\in \Irr(\gf, K)$
is of countable dimension, Dixmier's version of Schur's Lemma is applicable 
(see \cite{W}, 0.5.1 and 0.5.2) and associates to $V$ an {\it infinitesimal 
character} $\chi_V \in \spec(\cZ(\gf))$. 
\par Harish-Chandra study of distributional characters of irreducible admissible representations
led him to the following fundamental result:

\begin{thm}\label{HC11}  {\rm(Harish-Chandra)} 
\begin{enumerate}
\item Every $V\in \Irr(\gf, K)$ is weakly admissible. 
\item The map 
$$\Irr(\gf, K)\to \spec(\cZ(\gf)), \ \ V\mapsto \chi_V$$
has finite fibers. 
\end{enumerate}
\end{thm}
Harish-Chandra's theorem allows us to characterize finitely 
generated weakly admissible modules in various useful ways: 

\begin{thm}\label{th=HC} For a weakly admissible  $(\gf,K)$-module $V$ the following assertions
are equivalent:
\begin{enumerate}
\item $V$ is finitely generated as a $\gf$-module. 
\item $V$ is $\cZ(\gf)$-finite. 
\item $V$ is finitely generated as an $\nf$-module,  where 
$\nf$ is a maximal unipotent subalgebra of $\gf$. 
\end{enumerate}
\end{thm}
\begin{proof}(i)$\Rightarrow$(ii) follows from the fact that we can take 
generators of $V$ belonging to $K$-types and the fact that $\cZ(\gf)$ preserves
$K$-types. Harish-Chandra's Theorem \ref{HC11} implies  (ii)$\Rightarrow$(i). 
\par  The implication (iii)$\Rightarrow$(i) is clear. Finally, a result of Osborne (\cite{W}, Prop. 3.7) asserts that 
$$\U(\gf)= \U(\nf) F \cZ(\gf) \U(\kf)$$
for a finite dimensional subspace $F\subset \U(\gf)$. 
Thus (i) and (ii) together imply (iii). 
\end{proof}

A $(\gf,K)$-module $V$ will be called 
{\it Harish-Chandra module} or {\it admissible $(\gf, K)$-module}
if the conditions in the theorem above are satisfied. 
Likewise, we will call a smooth Fr\'echet representation $(\pi, E)$ {\it admissible} provided the 
underlying $(\gf, K)$-module $E^{K-{\rm fin}}$ is admissible.  

\par Harish-Chandra modules form a category $\HC$ with morphisms the linear $(\gf, K)$-maps. 
This category has a natural duality structure which we are going to describe now. 
If $V$ is a Harish-Chandra module, then we denote by $V^*$ its 
algebraic dual and by $\tilde V\subset V^*$ the $K$-finite vectors 
in $V^*$. Note that $\tilde V$ is a $\gf$-submodule of $V^*$. 
As $V$ is weakly admissible the same holds for $\tilde V$ and we  readily obtain that 
$$\tilde {\tilde V}=V\, .$$
As $V$ is $\cZ(\gf)$-finite, the same holds for $\tilde V$ and thus 
$\tilde V$ is again a Harish-Chandra 
module by Theorem \ref{th=HC} above. We refer to $\tilde V$ as the {\it Harish-Chandra 
module dual to $V$}. 

\par Another basic feature of Harish-Chandra modules is the Casselman embedding theorem 
which asserts that every Harish-Chandra module can be embedded into the $K$-finite vectors 
of some minimal principal series representation (\cite{Cas}). To be more specific, let 
us fix an Iwasawa decomposition $G=NAK$ and write 
$P_{\rm min}=MAN$ for the associated minimal parabolic 
subgroup. Here $M:=Z_K(A)$ is the centralizer of $A$ in $K$.
For a finite dimensional $P_{\rm min}$-module $W$ we denote by 
$$I^\infty(W):=C^\infty(W\times_{P_{\rm min}} G)$$
the smooth sections of the $G$-equivariant 
vector-bundle $W\times_{P_{\rm min}} G \to P_{\rm min}\bs G$, that is 
the smooth functions $f: G\to W$ which satisfy $f(pg)=p\cdot f(g)$ for all $p\in P_{\rm min}$ 
and $g\in G$. We topologize $I^\infty(W)$ with the natural Fr\'echet topology 
of compact convergence of all derivatives. 
Note that $I^\infty(W)$ becomes an admissible SF-module for $G$ under right-displacements in 
the arguments, i.e.
the prescription 
$$R(g)f(x):=f(xg) \qquad (g, x\in G, f\in I^\infty(W))$$
gives rise to an  admissible SF-representation $(R, I^\infty(W))$. 
The corresponding $(\gf, K)$-module 
$$I(W):=I^\infty(W)^{K-{\rm fin}}$$
is a Harish-Chandra module. 

\begin{thm}\label{th=Cas} {\rm (Casselman)} For every Harish-Chandra module $V$ there exists a
finite dimensional $P_{\rm min}$-module $W$ and a $(\gf, K)$-embedding 
$V\to I(W)$. 
\end{thm}

\begin{proof} \cite{W}, Cor. 4.2.4. \end{proof}

Let us remark that the restriction 
morphism 
$$\Res_K:  I^\infty(W) \to C^\infty( W\times_M K), \ \ f\mapsto f|_K$$
is a $K$-equivariant isomorphism of Fr\'echet spaces. The image  $C^\infty( W\times_M K)$ 
is often more convenient to work with as $K$ is a compact manifold. 
It is important to note that the corresponding $G$-representation $(\pi, C^\infty(W\times_M K))$
defined by $\pi(g):= \Res_K \circ R(g) \circ (\Res_K)^{-1}$ completes 
to a Hilbert representation $(\pi, L^2(W\times_M K))$.

\par As a $K$-module $I(W)$ is isomorphic to $\C[W\times_M K]$ and 
this allows us to give a polynomial bound on the $K$-multiplicities
of a Harish-Chandra module. More specifically let us denote by $\hat K$  the set of equivalence classes of 
irreducible unitary representations of $K$. We will identify
an equivalence class $[\tau]\in \hat K$ with a representative $\tau$. 
If $V$ is a $K$-module, then we denote  by $V[\tau]$ its $\tau$-isotypical 
part. Similarly we denote for $v\in V$ by $v_\tau$ 
its $\tau$-isotypical component.

\par Let $\tf$ be the Lie algebra of a maximal torus of $K$. We often identify 
$\tau$ with its highest weight in $i\tf^*$ (with respect to a fixed 
positive system). In particular, $|\tau|\geq 0$ will refer to the Cartan-Killing norm 
of the highest weight $\tau$. 
As a consequence of Theorem \ref{th=Cas} we obtain 
Harish-Chandra's multiplicity bound.

\begin{thm} {\rm (Harish-Chandra)} \label{th=mb} Let $V$ be a Harish-Chandra module. 
Then there exists a $C>0$  such that 
$$\dim V[\tau]\leq C (1+ |\tau|)^{\dim K - \dim M} \qquad (\tau\in \hat K)\, .$$ 
\end{thm}

\section {Harish-Chandra modules II: topological properties}
This section is devoted to topological properties of representations
$(\pi, E)$ whose $K$-finite vectors form a Harish-Chandra module.

\subsection{Definition and existence of globalizations}

Given a Harish-Chandra module $V$ we say 
that a representation $(\pi, E)$ of $G$ is a {\it globalization}
of $V$ provided the $K$-finite vectors $E^{K-{\rm fin}}$ of $E$ are smooth and 
isomorphic to $V$ as a $(\gf, K)$-module.

\par Every  Harish-Chandra module $V$ admits 
a Hilbert globalization $\H$ (and hence an SF-globalization
by taking the smooth vectors in $\H$). In fact, by Theorem \ref{th=Cas},  
we can embed $V$ into some minimal principal series $I(W)$ and the closure of 
$V$ in $L^2(W\times_M K)$ defines a Hilbert globalization 
$\H$ of $V$. Note that $\H^\infty$ coincide with the closure of $V$ in $I^\infty(W)$.

\begin{rem} We caution the reader that there exist 
irreducible Banach representation 
$(\pi, E)$ of $G$ which are not admissible \cite{S}, i.e. they are not
globalizations
of Harish-Chandra modules. However, if $(\pi, \H)$ happens to be 
a unitary irreducible representation, then Harish-Chandra has shown  that 
$\pi$ is admissible.  
\end{rem}

\begin{rem} Let $V$ be a Harish-Chandra module and $(\pi, E)$ 
a Banach globalization. Then 
$$\Pi(\cR(G))V =\Pi(\S(G))V\, .$$
Indeed, by a basic result of Harish-Chandra 
there exists for each $v\in V$ a $K\times K$-finite $h\in 
C_c^\infty(G)$ such that $\Pi(h) v=v$. As $\cR(G)*C_c^\infty(G)\subset 
\S(G)$ the asserted equality is established. 
\end{rem} 

\subsection{The contragredient of a Banach-globalization}

In the introductory section we saw that the dual of a Banach representation
$(\pi, E)$ might not be continuous which brought us to the notion 
of contragredient representation $(\tilde \pi, \tilde E)$. 
Recall that the continuous dual $\tilde E$ was the largest closed 
subspace of $E^*$ on which the dual action is continuous. If $E$ happens to be reflexive
then $\tilde E =E^*$

\par The case where $(\pi, E)$ is a Banach globalization 
of a Harish-Chandra module $V$ is of particular interest to us. 
Here the situation is as follows: 

\begin{lem}\label{cdual} Let $V$ be a Harish-Chandra module and $\tilde V$ its dual. If
$(\pi,E)$ is a Banach globalization of $V$, then $\tilde V\subset \tilde E$. In
particular, $(\tilde \pi, \tilde E)$ is a Banach globalization of $\tilde V$. 
\end{lem}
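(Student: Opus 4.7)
The plan is to proceed in three steps: extend each $\tilde v \in \tilde V$ to a continuous functional on $E$, show the extension lies in $E_c^*$, and then identify $(E_c^*)_K$.

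Decomposing $\tilde V$ into its $K$-isotypic components, it suffices to treat $\tilde v \in \tilde V(\tilde\tau)$ for a single $K$-type $\tau$. The continuous $K$-projection $P_\tau := d_\tau \int_K \overline{\chi_\tau(k)}\,\pi(k)\,dk$ on $E$ has image consisting of $K$-finite vectors, hence equal to the finite-dimensional space $V(\tau) \subset V = E_K$; in particular $E$ is weakly admissible. Since the natural pairing $\tilde V(\tilde\tau) \times V(\sigma) \to \C$ is $K$-invariant and vanishes for $\sigma \neq \tau$, we have $\tilde v(v) = \tilde v(P_\tau v)$ for $v \in V$, so $\lambda := \tilde v \circ P_\tau$ is a continuous extension of $\tilde v$ from $V$ to $E$. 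This yields a $(\gf,K)$-equivariant injection $\tilde V \hookrightarrow E^*$, and it remains to show the image lies in $E_c^*$.

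For the main step I combine two ingredients with a weak-$*$ approximation. \emph{First}, for $f \in C_c^\infty(G)$ and any $\mu \in E^*$ the functional $\Pi^*(f)\mu$ lies in $E_c^*$: the identity $\pi^*(h)\Pi^*(f)\mu = \Pi^*(L_h f)\mu$ (with $L_hf(g) = f(h^{-1}g)$) reduces this to the continuity of $h\mapsto L_hf$ in $L^1(G)$ on a fixed compact support, together with boundedness of $\Pi^*$ there. \emph{Second}, the adjoint $P_\tau^*\colon E^*\to E^*$, $\mu \mapsto \mu \circ P_\tau$, has image of dimension $\dim V(\tau) < \infty$ and, by a direct Haar-measure computation on $K$, coincides with the $\tilde\tau$-isotypic projector on $E^*$ for the $K$-action $\pi^*$. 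Pick an approximate identity $f_n \in C_c^\infty(G)$ at $\mathbf 1$. Joint continuity of $\pi$ gives $\Pi^*(f_n)\lambda \to \lambda$ pointwise on $E$, i.e.\ in the weak-$*$ topology. A direct computation produces $P_\tau^*\Pi^*(f_n)\lambda = \Pi^*(h_n)\lambda$ with $h_n(g) := d_\tau\int_K \chi_\tau(k)\,f_n(k^{-1}g)\,dk \in C_c^\infty(G)$, so the first ingredient gives $P_\tau^*\Pi^*(f_n)\lambda \in E_c^*$ for each $n$. Since $P_\tau^*$ is weak-$*$ continuous and $P_\tau^*\lambda = \lambda$, we get $P_\tau^*\Pi^*(f_n)\lambda \to \lambda$ in weak-$*$; but both sides lie in the finite-dimensional subspace $P_\tau^*(E^*)$, on which weak-$*$ and norm topologies coincide, so the convergence holds in the $E^*$-norm. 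As $E_c^*$ is norm-closed in $E^*$, this gives $\lambda \in E_c^*$.

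The ``in particular'' assertion now follows formally. $E_c^*$ is a closed subspace of the Banach space $E^*$, so it is Banach, and $(\pi^*,E_c^*)$ is a Banach representation by the defining property of $E_c^*$. The inclusion $\tilde V \subset (E_c^*)_K$ has just been established; conversely, restriction $(E_c^*)_K \to V^*$, $\mu \mapsto \mu|_V$, is $(\gf,K)$-equivariant with image in the $K$-finite part $\tilde V$, and it is injective because $V = E_K$ is dense in $E$ (Peter--Weyl applied to the $K$-action on $E$). The main obstacle is the middle step: a naive operator-norm estimate $\|P_\tau \pi(g^{-1}) - P_\tau\|_{B(E,V(\tau))}\to 0$ is circular, reducing to the same statement for the dual basis functionals of $V(\tau)^*$; the device that bypasses this is exactly the finite-dimensionality of $P_\tau^*(E^*)$, which upgrades weak-$*$ convergence to norm convergence.
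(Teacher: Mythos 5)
Your proof is correct and follows the same overall scheme as the paper's: extend $\tilde v$ across $E$ by precomposing with the $K$-projection $\pr_\tau$, then identify the $K$-finite vectors of $E_c^*$ by density of $V=E_K$ in $E$. The paper compresses the crucial orbit-continuity step into the single phrase ``As $\pr_\tau$ is continuous, the first assertion follows,'' whereas your argument --- smoothing $\lambda$ by an approximate identity, passing to the finite-dimensional image of $\pr_\tau^*$ where weak-$*$ and norm convergence coincide, and using that $E_c^*$ is norm-closed --- carefully supplies the justification the paper omits, and you rightly observe that a direct operator-norm bound $\|\pr_\tau\pi(g^{-1})-\pr_\tau\|\to 0$ would reduce circularly to the same statement for the dual basis functionals of $V[\tau]^*$.
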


\begin{proof} Let $(\pi, E)$ be a Banach globalization of $V$. 
For a $K$-type $\tau\in \hat K$ let us consider the projection

$$\pr_\tau: E \to E[\tau]=V[\tau], 
\ \ v\mapsto \dim (\tau) \int_K \oline{\chi_\tau(k)} \pi(k)v \ dk$$
on the $\tau$-isotypical component. Here $\chi_\tau$ refers to the character
of $\tau$. As $\pr_\tau$ is continuous, the first 
assertion $\tilde V\subset \tilde E$ follows. Further,   a $K$-type 
of $\tilde E$ does not vanish on $V$ as $V$ is dense in $E$. 
Thus the $K$-finite vectors of $\tilde E$ are contained in $\tilde V$. 
\end{proof}

\subsection{Nuclear structures on smooth vectors}
The goal of this section is to prove that the smooth vectors of a Banach-globalization 
carry the structure of a nuclear Fr\'echet space. 

\par It is convenient to introduce some useful notation in this regard. 
Let $V$ be a Harish-Chandra module and $p$ be a norm on $V$. Then we say 
that $p$ is a $G$-continuous norm on $V$ provided the completion of 
$V$ with respect to $p$ gives rise to a Banach representation of $G$. 

\par We introduce a preorder on the set of $G$-continuous 
norms on a Harish-Chandra module $V$: in symbols

$$p\prec q :\iff (\exists k\in \N_0, C>0)\   p(v) \leq C q_k(v) \qquad (v\in V)\, .$$
We say that $p$ and $q$ are {\it Sobolev-equivalent}, in symbols $p\asymp q$,  provided $p\prec q$ and $q\prec p$.

To begin with we recall a familiar result from convex analysis
(John's Theorem, see \cite{MS}, Th. 3.3).

\begin{lem}\label{l=finite} Let $(V, \|\cdot\|)$ be a finite dimensional normed vector space 
of dimension $n$. Then there exists $\lambda_1, \ldots, \lambda_n \in V^*$ 
with $\|\lambda_i\|=1$, $1\leq i\leq n$, such that the associated 
Hermitian  form 
$$Q(x):=\sum_{j=1}^n |\lambda_j(x)|^2$$
satisfies 
$$\|\cdot\|^2 \leq Q \leq 2n \|\cdot\|^2\, .$$ 
\end{lem}

\begin{thm} \label{th=nuc} Let $V$ be a Harish-Chandra module and $p$ be a $G$-continuous norm. 
Then the following assertions hold: 
\begin{enumerate}
\item There exists a $G$-continuous Hilbert-norm $q$ such that $p$ is Sobolev equivalent to  $q$. 
\item There exits a $k\in \N$ such that inclusions $(V, p_k) \to (V, q)$ and 
$(V,q)\to (V, p_k)$ are nuclear. 
\end{enumerate}
\end{thm} 

\begin{proof} Let $p$ be a $G$-continuous norm of $E$. 
We recall from Theorem \ref{th=mb} that there is an integer $N>0$ such that 

\begin{equation} \label{mbo} m(\tau):=\dim V[\tau]\leq (1+|\tau|)^N  \qquad (\tau\in \hat K)\, . \end{equation}
According to Lemma \ref{l=finite} 
we find for each $\tau\in \hat K$  a basis $\lambda_1^\tau, \ldots, \lambda_{m(\tau)}^\tau$ 
of $V[\tau]^*$ such that 
the Hermitian form 
$$Q_\tau(v):=\sum_{j=1}^{m(\tau)} |\lambda_j (v)| \qquad (v\in V[\tau])$$
satisfies 
\begin{equation}\label{squeeze}  p^2(v)\leq Q_\tau(v) \leq 2 m(\tau) p^2(v) \qquad (v\in V[\tau])\, .\end{equation}
Define a Hermitian form $Q$ on $V$ by 
$$Q(v):=\sum_{\tau\in \hat K} Q_\tau(v_\tau)\, $$ 
and let $\tilde q(v)$ be the associated norm. 
Let now $k\in 2\N$ and $\tilde q_{k,K}$ be the $k$-th 
$K$-Sobolev norm of $q$. 
For $\tau\in \hat K$ we set 
$$\|\tau\|_k:=\Big(\sum_{j=0}^{k/2} \|\tau\|^{2j}\Big)^{1\over 2}$$
and note that $\|\tau\|_k\asymp \|\tau\|^k$. 
{}For all $v\in V[\tau]$ we have 

$$ \tilde q_{k,K}(v) = \|\tau\|_k \tilde q(v)\, .$$
Since $p(v)\leq \sum_{\tau\in \hat K} p(v_\tau)$,  we obtain for sufficiently 
large $k$  that 

\begin{align*} p(v) & \leq \sum_{\tau\in \hat K} \tilde q(v_\tau) = \sum_{\tau\in \hat K} 
{1\over \|\tau\|_k} \|\tau\|_k\ \tilde q (v_\tau) \\ 
& \leq \Big[\sum_{\tau\in \hat K}  {1\over \|\tau\|_k^{2}} \Big]^{1\over 2} 
\cdot\Big[\sum_{\tau\in \hat K} \|\tau\|_k^{2} Q_\tau (v_\tau) \Big]^{1\over 2} \\
& \leq C \tilde q_{k,K}(v)\end{align*}

The second inequality in (\ref{squeeze}) combined with the multiplicity bound (\ref{mbo})
yields a constant $C>0$ such that 
$$\tilde q(v)\leq  C p_{k,K}(v)  \qquad (v\in V)\, $$
provided $k\in \N$ is taken large enough. 

\par As $\tilde q$ might not be $G$-continuous, we have to address this issue. 
First note that $p_{K,k}\leq C p_k$ and that $p_k$ is $G$-continuous. Thus 
for sufficiently large $c>0$ the prescription
$$q(v):=\left( \int_G \tilde q(\pi(g)v)^2 e^{-c d(g)} \ d g \right)^{1\over 2}$$
defines a $G$-continuous Hilbert norm with $q\leq C p_k$. Local Sobolev 
on the other hand readily yields that $\tilde q\leq C q_k$ for $k\in \N$ sufficiently big. 
This completes the proof of the theorem.
\end{proof}

\begin{cor}\label{Cor=nuc} Let $(\pi, E)$ be an SF-globalization of a Harish-Chandra module
  $V$. 
Then: 
\begin{enumerate} \item 
$E$ is a nuclear Fr\'echet space.
\item The topology on $E$ is determined by a countable family of $G$-continuous $K$-invariant Hilbert 
semi-norms. 
\end{enumerate}
\end{cor}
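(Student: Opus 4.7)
Proof plan.

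For part (i), the idea is to realize $E$ as a topological subspace of a countable product of nuclear Fr\'echet spaces of the form treated by the preceding Proposition. Fix a defining family $(\rho^n)_{n\in\N}$ of $G$-continuous semi-norms for $E$, and for each $n$ let $E_n$ denote the Banach completion of $E/\ker\rho^n$ with its induced Banach $G$-action. The image of $V$ in $E_n$ is dense; since each $K$-isotypical projector is continuous on a Banach representation and $V[\tau]$ is finite-dimensional, the image of $V[\tau]$ is closed and equals $E_n[\tau]$. Hence $E_n$ is weakly admissible, and $V_n:=(E_n)_K$ (a quotient of $V$) is a Harish-Chandra module of which $E_n$ is a Banach globalization. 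By the preceding Proposition, $E_n^\infty$ is nuclear Fr\'echet. The natural map $E\to E_n$ lands in $E_n^\infty$ (since $E$ is smooth) and is continuous into the smooth topology, because the Sobolev semi-norms $(\rho^n)_{2k}$ that define the topology on $E_n^\infty$ are continuous on the smooth space $E$. Assembling these gives a continuous injection $\Phi:E\to\prod_n E_n^\infty$ whose pullback topology, generated by $\{(\rho^n)_{2k}\}_{n,k}$, coincides with the topology of $E$ by Lemma~\ref{l=gkl}. Thus $E$ embeds topologically into a countable product of nuclear Fr\'echet spaces, which is itself nuclear, and subspaces of nuclear spaces are nuclear.

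For part (ii), the plan is to combine nuclearity from (i) with a $G$-averaging construction. By (i), $E$ is nuclear Fr\'echet, so it admits a defining family of Hilbert semi-norms; consequently for each $n$ there is a continuous Hilbert semi-norm $q^n$ on $E$ with $q^n\ge\rho^n$. The semi-norm $q^n$ need not be $G$-continuous, so for $N_n>0$ sufficiently large (chosen to absorb the moderate-growth rate of $q^n$ under $\pi$, which is automatic since $q^n$ is continuous on an F-representation, and to make $e^{-N_n d(g)}$ integrable), define
$$\tilde q^n(v)^2 := \int_G q^n(\pi(g)v)^2\, e^{-N_n d(g)}\,dg\qquad (v\in E).$$
Then $\tilde q^n$ is Hilbert, coming from the Hermitian form $\la v,w\ra := \int_G \la\pi(g)v,\pi(g)w\ra_{q^n}\, e^{-N_n d(g)}\,dg$; using unimodularity of the reductive group $G$ and $d(gh^{-1})\ge d(g)-d(h)$, a change of variables gives $\tilde q^n(\pi(h)v)\le e^{N_n d(h)/2}\tilde q^n(v)$, yielding local equicontinuity. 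Combined with continuity of orbit maps for $v$ in the dense subspace $V\subset E$ (which is smooth), Lemma~\ref{repcrit} yields $G$-continuity of $\tilde q^n$. The $G$-continuity of $\rho^n$ provides the two-sided lower bound $\rho^n(\pi(g)v)\ge C^{-1}e^{-Md(g)}\rho^n(v)$, which when integrated against $e^{-N_n d(g)}\,dg$ gives $\tilde q^n\ge c_n \rho^n$. Since $\tilde q^n$ is also continuous on $E$ (majorized by a defining semi-norm via the moderate-growth estimate), the family $(\tilde q^n)_{n\in\N}$ defines the same topology as $(\rho^n)$ and consists of $G$-continuous Hilbert semi-norms.

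The main obstacle is part (ii): the nuclear-space Hilbert semi-norms are not $G$-continuous, and naive averaging of a non-Hilbert semi-norm over $G$ fails to produce a Hilbert one. The trick above sidesteps this by averaging the underlying sesquilinear form instead of the semi-norm, preserving the Hilbert property; one then has to verify that this averaging retains domination over $\rho^n$, which is precisely where the two-sided moderate-growth bound stemming from the $G$-continuity of $\rho^n$ enters.
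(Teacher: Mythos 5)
Your proof is correct and follows essentially the same route as the paper: part (i) embeds $E$ into a countable product of the nuclear Fr\'echet spaces $E_n^\infty$ arising from Banach completions (the paper states this step tersely as ``the proposition above implies that $E$ is nuclear''), and part (ii) uses the resulting Hilbert semi-norm basis and averages the Hermitian forms against $e^{-c\,d(g)}\,dg$ to make them $G$-continuous, exactly as in the paper's one-line formula for $p_n$. You supply more of the bookkeeping (that $E_n$ is a Banach globalization of a Harish-Chandra quotient of $V$, that the pullback topology agrees via Lemma~\ref{l=gkl}, and the two-sided bound $\tilde q^n\ge c_n\rho^n$ via the $G$-continuity of $\rho^n$ after arranging $q^n\ge\rho^n$), but the substance is the same.
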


In view of Theorem \ref{th=nuc} it is no loss of generality to assume that 
a $G$-continuous norm on a Harish-Chandra module to be Hermitian. In addition we will request 
that all norms are $K$-invariant.

\subsubsection{Weighted function spaces}\label{ss1}
This subsection is about natural realizations 
of Harish-Chandra modules in weighted function spaces on $G$.  

\bigskip For $m\geq 0 $ we define the weighted 
Banach-space
$$C(G)_m:=\Big \{ f\in C(G)\mid p^m(f):=\sup_{g\in G} {|f(g)|\over  \|g\|^m}<\infty \Big \}\, .$$
We view $C(G)_m$ as a module for $G$ under the right regular action $R$.
Note that this action might not be continuous in general (take $m=0$ and $G$
not compact). 
From the properties of the norm one readily shows that 

$$p^m(R(g)v)\leq \|g\|^{m} \cdot p^m(v)$$
for all $g\in G$. Thus the action is locally equicontinuous. 
It follows that the smooth vectors 
for this action $C(G)_m^\infty$ define an $SF$-module  for $G$.
Note that  $C(G)_m^\infty\subset C^\infty(G)$ as a consequence of the local 
Sobolev-Lemma.

\par Likewise we associate to $m\geq 0$ the weighted Hilbert-space
$$L^2(G)_m:=L^2(G, \|g\|^{-m} dg)\, .$$ 
Note that the right regular action of $G$ on $L^2(G)_m$ 
defines a Hilbert representation of $G$. Let us 
denote by $h^m$ the corresponding Hilbert norm.

\par Let $k_0>0$ be such that $\int_G \|g\|^{-k_0} \ dg<\infty$. 
Then for all $m\in\R $ one obtains a continuous embedding

\begin{equation}\label{incl} C(G)_{(m-k_0)/2} \to L^2(G)_m \, \end{equation}
or to phrase it equivalently that there exists a constant $C>0$ such that 
 
\begin{equation}\label{hp1} h^m \leq C \cdot p^{(m-k_0)/2}\, .\end{equation}

To obtain inequalities of the reverse kind we shall employ the Sobolev 
Lemma on $G$. It is not hard to show that the derivatives of the norm 
function $\|\cdot \|$ are bounded by a multiple of $\|\cdot \|$.
Hence we obtain constants $C>0$ and $l_0\in \N$ with $l_0$ independent from
$m$ such that 
\begin{equation}\label{hp2} p^m \leq C\cdot h^{2m}_{l_0}\end{equation}
holds on $L^2(G)_m^\infty$.

\par If $V$ is a Harish-Chandra module, then we denote by 
$\Xi\subset V$ a $\cZ(\gf)$-invariant set of generators of minimal 
dimension, say $k$.

\par Let $V$ be a Harish-Chandra module $V$ and  $\Xi \subset \tilde V$ a 
$\cZ(\gf)$-stable set of generators as above.  We fix an inner product on $\Xi$ and let $\xi_1, \ldots\xi_k$ 
be an orthonormal basis of $\Xi$. 
The inner product on $\Xi$ yields an inner product on the dual space $\Xi^*$.
Attached to $\Xi$ we consider the $G$-equivariant embedding 

$$\phi_\Xi: V^\infty\to C^\infty(G) \otimes \Xi^* =C^\infty(G, \Xi^*); \phi_\Xi(v)(g)(\xi):=  m_{\xi, v}(g)$$ 
with $\xi\in \Xi$ and $m_{\xi, v}(g)= \xi(\pi(g)v)$ the corresponding matrix coefficient. 

\par We claim that $\im \phi_\Xi$ lies in some 
 $C_m(G)^\infty \otimes \Xi^*$ for $m$ suitably large. In fact choose a
Banach globalization $(\pi, E)$ of $V$ with norm $q$. Then 

$$\max_{1\leq j\leq k}|m_{\xi_j,v}(g)|\leq C \|g\|^N q(v)\qquad (v\in V^\infty) $$
for suitable constants $N$ and $C>0$. 
Hence $\im \phi_\Xi \subset C_N(G)^\infty \otimes \Xi^*$. 

Let now $E_N$  be the closure of $\phi_\Xi(V^\infty)$ in  $C_N(G) \otimes \Xi^*$. 

\begin{lem} \label{l=app} With the notation from above, $E_N$ defines 
a Banach globalization of $V$. 
\end{lem}
\begin{proof} It is clear that $E_N$ is a Banach space.
With regard to the norm on $E_N$ the operators 
$\pi(g)$ are bounded by $\|g\|^N$. Hence the action is locally
equicontinuous. 
 Further, as $p_N$ is dominated 
by $q$ on $V^\infty$ we conclude that all orbit maps
$\gamma_v:G\to E_N$ are continuous. Thus $G\times E_N\to E_N$ is 
a representation by Lemma \ref{repcrit}.
\end{proof}

From our construction it is clear that the smooth vectors 
$E_N^\infty$ for $E_N$ coincide with the SF-closure 
$\phi_\Xi (V^\infty)$ in  $C_N(G)^\infty \otimes \Xi^*$. 
Let us denote the restriction of $p^N$ to $E_N$
by the same symbol.

\par Set $N':=2N+k_0$. Then there is a natural 
$G$-equivariant embedding 

$$\psi_\Xi: V^\infty\to L^2(G)_{N'}\otimes \Xi^*;\ \psi_\Xi (v)(g)(\xi)= m_{\xi,v}(g)$$ 
and the closure of the image defines a Hilbert globalization 
$\H_{N'}$ of $V$.

\subsubsection{The dual of an SF-globalization}
The material in this subsection is not needed in the sequel of this article. However 
it contains a fact worthwhile which is worthwhile to mention and thematically fits 
in our discussion. 
\par Let $(\pi, E)$ be an SF-globalization of a Harish-Chandra module $V$. 
In Corollary \ref{Cor=nuc} we have shown that 
$E$ is a nuclear Fr\'echet space. 
As nuclear Fr\'echet spaces are reflexive, it follows that 
the dual representation $(\pi^*, E^*)$ of $(\pi, E)$ exists
and that the bi-dual representation $(\pi^{**}, E^{**})$ 
is naturally isomorphic to $(\pi, E)$.
 
\par For any SF-representation $(\pi, E)$
we recall that the natural action of $\S(G)$, 

$$\S(G)\times E\to E, \ \ (f, v)\mapsto \Pi(f)v$$
is continuous, i.e. a continuous bilinear map (Proposition \ref{p=eq}). 

\par On the dual side we obtain a dual action 
$$\S(G)\times E^*\to E^*, \ \ (f, \lambda)\mapsto \Pi^*(f)\lambda; 
\ \Pi^*(f)\lambda=\lambda\circ \Pi(f^*)$$
where $f^*(g)=f(g^{-1})$. 
For a general SF-representation on a reflexive Fr\'echet space 
the dual dual action $\Pi^*$ of $\S(G)$ might not be continuous.
For globalizations however matters behave well and we record:   

\begin{lem} Let $(\pi, E)$ be an SF-globalization of a Harish-Chandra 
module $V$. Then the dual algebra action $\S(G)\times E^*\to E^*$ 
is continuous. 
\end{lem}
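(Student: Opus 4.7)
The plan is to verify the two continuity statements that together comprise separate continuity, using only Proposition~\ref{p=eq} and standard properties of the strong dual topology.

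\smallskip\noindent\textbf{Step 1 (continuity in $\lambda$ for fixed $f$).} Fix $f\in\S(G)$. By Proposition~\ref{p=eq}, $\Pi(f^*): E\to E$ is a continuous linear operator, so in particular it carries bounded subsets of $E$ to bounded subsets. For any bounded $B\subset E$ and any $\lambda\in E^*$,
\[
\sup_{v\in B}|(\Pi^*(f)\lambda)(v)|=\sup_{v\in B}|\lambda(\Pi(f^*)v)|=\sup_{w\in\Pi(f^*)B}|\lambda(w)|,
\]
and the right-hand side is a continuous seminorm of $\lambda$ for the strong topology on $E^*$ since $\Pi(f^*)B$ is bounded. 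Hence $\Pi^*(f):E^*\to E^*$ is continuous.

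\smallskip\noindent\textbf{Step 2 (continuity in $f$ for fixed $\lambda$).} Fix $\lambda\in E^*$. The involution $f\mapsto f^*$ is continuous on $\S(G)$, and $\Pi:\S(G)\times E\to E$ is jointly continuous by Proposition~\ref{p=eq}; composing with $\lambda$ gives a continuous bilinear form
\[
B_\lambda:\S(G)\times E\to\C,\qquad B_\lambda(f,v):=\lambda(\Pi(f^*)v).
\]
Joint continuity between Fr\'echet spaces yields continuous seminorms $p$ on $\S(G)$ and $q$ on $E$ and a constant $C>0$ such that $|B_\lambda(f,v)|\le C\,p(f)\,q(v)$. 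Now for any bounded subset $B\subset E$ set $M_B:=\sup_{v\in B}q(v)<\infty$; then
\[
\sup_{v\in B}|(\Pi^*(f)\lambda)(v)|=\sup_{v\in B}|B_\lambda(f,v)|\le C\,M_B\,p(f).
\]
Since $B$ was an arbitrary bounded set, this shows that $f\mapsto\Pi^*(f)\lambda$ is continuous from $\S(G)$ into $E^*$ endowed with the topology of bounded convergence (i.e.\ the strong topology).

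\smallskip\noindent\textbf{Conclusion.} Combining Steps 1 and 2 gives separate continuity of the action $\S(G)\times E^*\to E^*$. There is no serious obstacle; the only place where some attention is required is Step 2, where one has to pass from the joint-continuity estimate for $B_\lambda$ (which gives a product bound $p(f)q(v)$) to the strong seminorms on $E^*$, which is exactly where boundedness of $B\subset E$ together with the seminorm $q$ produces the factor $M_B$. Joint versus merely separate continuity of $\Pi$ is essential here; fortunately Proposition~\ref{p=eq} supplies the former.
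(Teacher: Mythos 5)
Your proof is correct. Step~1 is essentially the same short argument as the paper's (continuity of $\Pi(f^*)$ maps bounded sets to bounded sets, hence $\Pi^*(f)$ is continuous for the strong topology). In Step~2, however, you take a genuinely different route from the paper. The paper appeals to Corollary~\ref{Cor=nuc} to conclude that $E$ is nuclear Fr\'echet, hence that every closed bounded $A\subset E$ is compact, and then gets the required neighborhood $U\subset\S(G)$ by a compactness argument applied to the jointly continuous map $(f,v)\mapsto\lambda(\Pi(f^*)v)$ on $U\times A$. You instead use the standard fact that a jointly continuous bilinear form between Fr\'echet spaces admits a product-of-seminorms bound $|B_\lambda(f,v)|\le C\,p(f)\,q(v)$, after which mere boundedness of $A$ (rather than compactness) already gives the uniform estimate over $v\in A$. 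Your argument is somewhat more elementary in that it never invokes the nuclearity result, and it isolates exactly which hypothesis is doing the work (joint, not just separate, continuity of $\Pi$, furnished by Proposition~\ref{p=eq}); the paper's argument, by contrast, leans on a structural result about $E$ that is available in this setting but not needed for the present lemma. Both proofs are valid.
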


\begin{proof}  We recall from Corollary \ref{Cor=nuc} that 
$(\pi, E)=\displaystyle \quad \varprojlim (\pi_n, E_n)$ is a projective limit of Hilbert 
representations $(\pi_n, E_n)$. Thus $(\pi^*, E^*)={\mathrm{\varinjlim}} (\pi_n^*, E_n^*)$
is a direct limit of Hilbert representation. As for all $n\in\N$ the dual action 
$\S(G) \times E_n^* \to E_n^*$ is continuous, the assertion follows.
\end{proof}

\section{Minimal and maximal SF-globalizations of Harish-Chandra modules}

\par Let us introduce a preliminary notion and call a Harish-Chandra module $V$ {\it good}
if it admits a unique SF-globalization. 
Eventually it will turn out that all Harish-Chandra modules are good 
(Casselman-Wallach). 

\par As we will see below there are  two natural 
extremal  SF-globalizations
of a Harish-Chandra module,  namely {\it minimal} and {\it maximal} 
SF-glo\-ba\-lizations. Eventually they will coincide but they are useful 
objects towards a proof of the Casselman-Wallach theorem. 

\subsection{Minimal globalizations}

\par An SF-globalization, say $V^\infty$, of an Harish-Chandra module $V$
will be called {\it minimal} if the following universal property holds: if $(\pi,E)$ is an SF-globalization of $V$, then there 
exists a continuous $G$-equivariant map $V^\infty\to E$ which extends 
the identity morphism $V\to V$. 
\par It is clear that minimal globalizations are unique. 
Let us show that they actually exist. We need to collect some facts 
about matrix coefficients. In this context we record the following result (see \cite{Cas}):  

\begin{lem} \label{Cas} Let $(\pi, E)$ be a Banach globalization of a Harish-Chandra 
module $V$. Then for all $\xi\in \tilde V\subset E^*$ and $v\in V$, 
the matrix coefficient 
$$m_{\xi, v}(g)=\xi(\pi(g)v) \qquad (g\in G)$$
is an analytic function on $G$. In particular $m_{\xi, v}$ is independent 
of the particular Banach globalization $(\pi, E)$ of $V$. 
\end{lem}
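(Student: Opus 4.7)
The plan is to show that $m_{\xi,v}$ is annihilated by a left-invariant elliptic operator with constant term (i.e. by a scalar-valued shift of the Laplacian), and then invoke analytic elliptic regularity. The global independence from the choice of Banach globalization will then follow because the Taylor expansion at the identity depends only on the $(\gf,K)$-module $V$.

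First I would use the two finiteness properties built into a Harish-Chandra module. Since $v\in V$ is $K$-finite, the $K$-module $Kv$ spans a finite-dimensional subspace $W_0\subset V$ which is stable under the elements $X_1,\ldots,X_m$ of $\kf$ (in the basis chosen before Lemma~\ref{lem=K}), in particular under $\Delta_\kf=\sum_{j=1}^m X_j^2$. Next, since $V$ is $\cZ(\gf)$-finite, the Casimir $\cC=\Delta-2\Delta_\kf\in\cZ(\gf)$ acts on $V$ through a finite-dimensional quotient, so the $\cZ(\gf)$-module generated by $W_0$ is still finite-dimensional. Re-closing under $K$ (which preserves $\cZ(\gf)$-finiteness since $K$ normalises $\cZ(\gf)$) gives a finite-dimensional subspace $W\subset V$ containing $v$ that is stable under both $\Delta_\kf$ and $\cC$, hence under $\Delta=\cC+2\Delta_\kf$.

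Pick a basis $w_1=v,w_2,\ldots,w_N$ of $W$ and let $A\in\mathrm{End}(W)$ be the matrix of $\Delta$ in this basis. The vector-valued function
\[
F:G\to\C^N,\qquad F(g):=\bigl(m_{\xi,w_1}(g),\ldots,m_{\xi,w_N}(g)\bigr)
\]
is smooth (since each $w_i\in V\subset E^\infty$) and satisfies $R_\Delta F=A\cdot F$, because $R_\Delta m_{\xi,w_i}(g)=\xi(\pi(g)\Delta w_i)$. Now $R_\Delta$ is a left-invariant second-order differential operator on $G$ whose principal symbol at the identity equals $\sum_{j=1}^n\xi_j^2$; by left-invariance of the symbol under the left-invariant Riemannian metric, $R_\Delta$ is elliptic on all of $G$. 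Since $G$ is a real-analytic manifold and the coefficients of $R_\Delta$ (in any analytic coordinate chart) are real-analytic, the classical analytic elliptic regularity theorem (Morrey--Nirenberg) applied to the system $(R_\Delta-A)F=0$ yields that $F$, and in particular $m_{\xi,v}$, is real-analytic on $G$.

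For the independence statement, observe that for any $u\in\U(\gf)$ one has
\[
(R_u m_{\xi,v})(\1)=\xi(d\pi(u)v),
\]
and the right-hand side only involves the $(\gf,K)$-action of $u$ on $v$ together with the $K$-finite functional $\xi\in\tilde V$; none of these data depend on the choice of Banach globalization. Hence all partial derivatives of $m_{\xi,v}$ at $\1$ coincide for any two Banach globalizations of $V$, and by analyticity the two matrix coefficients agree on the connected component of $\1$. For a point $g_0$ in another component of $G$, write $g_0\in k\cdot G^0$ with $k\in K$ (using that in a linear reductive group every component meets $K$) and apply the same Taylor-series argument to the analytic function $g\mapsto m_{\xi,v}(kg)=m_{k^{-1}\cdot\xi,v}(g)$, which is again determined by the $(\gf,K)$-structure.

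The delicate step is the second one: the specific choice of basis making $\Delta$ elliptic (rather than the non-elliptic Casimir) is essential, together with the fact that $W$ can be arranged to be $\Delta$-stable despite $\cZ(\gf)$-finiteness giving us stability only under $\cC$. Once that finite-dimensional $\Delta$-stable subspace is in hand, everything reduces to a standard application of elliptic regularity with real-analytic coefficients, which is the conceptual core of the statement.
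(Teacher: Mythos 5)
The paper does not prove this lemma; it is stated as a recorded fact with a citation to Casselman. So the only thing to assess is whether your argument is correct, and it is. Your route — passing from $K$-finiteness and $\cZ(\gf)$-finiteness to a finite-dimensional subspace $W\ni v$ stable under $\Delta=\cC+2\Delta_\kf$, then viewing $F=(m_{\xi,w_i})_i$ as a solution of the first-order-in-matrix-coefficients elliptic system $(R_\Delta - A)F=0$ and invoking Morrey--Nirenberg — is precisely the classical Harish-Chandra/Casselman argument for analyticity of $K$-finite matrix coefficients, so you have reconstructed the standard proof rather than found a new one.

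A few small remarks. The extra step of re-closing $\cZ(\gf)W_0$ under $K$ is unnecessary: since $\cZ(\gf)$ commutes with $\kf$, the space $\cZ(\gf)W_0$ is already $\kf$-stable, hence $\Delta_\kf$-stable. Your treatment of the independence statement is careful in exactly the places where care is needed: you correctly reduce the derivatives at $\1$ to the $(\gf,K)$-data $\xi(d\pi(u)v)$, and you correctly handle the non-identity components by translating by $k\in K$ and using that the resulting functional $\pi^*(k^{-1})\xi$ lies in $\tilde V$ and is again intrinsic to the $(\gf,K)$-module; this uses that every connected component of a linear reductive $G$ meets $K$, which holds here. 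The emphasis you place on needing the elliptic $\Delta$ rather than the hyperbolic Casimir $\cC$, and on the interplay between $K$-finiteness (for $\Delta_\kf$) and $\cZ(\gf)$-finiteness (for $\cC$), is exactly the conceptual crux.
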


We will now give the construction of the minimal globalization 
of a Harish-Chandra module $V$. For that let us fix a Banach globalization $(\pi,E)$ of $V$. 
Let ${\bf v}  =\{v_1, \ldots, v_k\}$ be a set of generators of $V$ 
and consider the map 

$$\S(G)^k\to E, \ \ {\bf f}=(f_1, \ldots, f_k)\mapsto \sum_{j=1}^k \Pi(f_j)v_j\, .$$
This map is linear,  continuous and $G$-equivariant 
(with $\S(G)^k$ considered as a module for $G$ under the left regular representation). 
Let us write 
$$\S(G)_{\bf v }:=\{{\bf f}\in \S(G)^k \mid \sum_{j=1}^k \Pi(f_j)v_j=0\}$$
for the kernel of this linear map. Note that   $\S(G)_{\bf v}$ is 
a closed $G$-submodule of $\S(G)^k$. 
We claim that $\S(G)_{\bf v}$ is independent of the choice 
of the particular globalization $(\pi, E)$ of $V$: In fact, for 
$v\in V$ and $f\in \S(G)$ we have $\Pi(f)v=0$ if and only if 
$\xi(\Pi(f)v)=0$ for all $\xi\in \tilde V$. 
As $g\mapsto m_{\xi, v}(g)=\xi(\pi(g)v)$ is analytic 
and hence independent of $\pi$ (Lemma \ref{Cas}), the claim follows. 
\par Lemma \ref{lem=222} shows that $\S(G)^k/\S(G)_{\bf v}$ is an SF-module for $G$.
Since $\Pi(\S(G)^{K\times K})V=V$ for $\S(G)^{K\times K}$ the $K\times
K$-finite functions of $\S(G)$, it follows that $\S(G)^k/\S(G)_{\bf v}$ is 
an SF-globalization of $V$.  
By construction 
$\S(G)/\S(G)_{\bf v}$ is the minimal globalization $V^\infty$.

We record the following general Lemma on quotients of Harish-Chandra 
modules in relation to minimal globalizations.

\begin{lem} \label{ll1} Let $V$ be a Harish-Chandra module and $V^\infty$ its 
unique minimal SF-globalization. Let $W\subset V$ be a submodule and 
$U:=V/W$. Let $\oline W$ be the closure of $W$ in $V^\infty$. 
Then $U^\infty= V^\infty/\oline W$. 
\end{lem}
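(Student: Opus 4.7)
The plan is to prove the lemma by checking that $V^\infty/\bar W$ is an SF-globalization of $U=V/W$ and then verifying the universal property that defines a minimal globalization.

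First I would verify that $V^\infty/\bar W$ is an SF-representation: since $\bar W$ is by definition a closed, $G$-invariant subspace of $V^\infty$ (invariance follows from continuity of the $G$-action and $G$-invariance of $W$), Lemma \ref{lem=222} immediately gives that $V^\infty/\bar W$ is a smooth Fréchet representation, hence an SF-representation.

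Next I would show that $V^\infty/\bar W$ is a globalization of $U$, i.e.\ that $(V^\infty/\bar W)_K \cong V/W$ as $(\gf,K)$-modules. The key intermediate step is the identity $V\cap \bar W = W$. The inclusion $W\subset V\cap \bar W$ is obvious. For the reverse, recall that for each $\tau\in\hat K$ the isotypic projector $\pr_\tau:V^\infty\to V^\infty[\tau]=V[\tau]$ is continuous (Lemma \ref{cdual} argument) and that $V[\tau]$ is finite-dimensional, hence closed. If $v\in V\cap\bar W$ lies in $V[\tau]$, then $v=\pr_\tau(v)$ lies in $\pr_\tau(\bar W)\subset\overline{\pr_\tau(W)}=\overline{W[\tau]}=W[\tau]$, since $W[\tau]\subset V[\tau]$ is finite-dimensional and thus closed. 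As $V$ is the sum of its $K$-isotypic components, this gives $V\cap\bar W=W$. Consequently the natural map $V/W\to V^\infty/\bar W$ is an injection into $(V^\infty/\bar W)_K$. To see it is surjective onto the $K$-finite vectors, note that $V/W$ is dense in $V^\infty/\bar W$ (the quotient map is continuous and $V$ is dense in $V^\infty$), while both $(V/W)[\tau]$ and $(V^\infty/\bar W)[\tau]$ are finite-dimensional (by admissibility of $V/W$ and of $V^\infty$). A dense subspace of a finite-dimensional space equals it, so equality holds on each $K$-type, hence on $(V^\infty/\bar W)_K$.

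Finally I would establish the universal property. Let $(\sigma,F)$ be an arbitrary SF-globalization of $U$. The $(\gf,K)$-map $V\twoheadrightarrow U\hookrightarrow F_K$ is an algebraic morphism from $V$ into the smooth globalization $F$; by the universal property of the minimal globalization $V^\infty$ it extends to a continuous $G$-equivariant map $\phi:V^\infty\to F$. Since $\phi$ vanishes on $W$ by construction and is continuous, it vanishes on $\bar W$, so it factors through a continuous $G$-equivariant map $V^\infty/\bar W\to F$ extending $\mathrm{id}_U$. Thus $V^\infty/\bar W$ satisfies the universal property defining the minimal globalization of $U$, and by uniqueness we conclude $U^\infty=V^\infty/\bar W$.

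The only mildly subtle point is the identity $V\cap\bar W=W$, which is the one place where one must use admissibility (finiteness of $K$-multiplicities) together with the continuity of the isotypic projectors; everything else is a formal consequence of minimality, Lemma \ref{lem=222}, and density of $V$ in $V^\infty$.
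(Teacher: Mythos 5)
Your overall strategy (exhibit $V^\infty/\oline W$ as an SF-globalization of $U$ and then verify the universal property) is a sound alternative to the paper's proof, which is far shorter: the paper simply observes that $\Pi(\S(G))V=V^\infty$ (true by construction of the minimal globalization as $\S(G)^k/\S(G)_{\mathbf v}$) passes to the quotient, giving $\Pi_U(\S(G))U=V^\infty/\oline W$, and this property characterizes $U^\infty$. Your first two steps are correct, and the verification that $V\cap\oline W=W$ via the isotypic projectors is a genuinely useful detail that the paper's terse proof leaves implicit. That part of your argument is a small improvement in exposition.

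There is, however, a real gap in your third step. You write that ``by the universal property of the minimal globalization $V^\infty$'' the $(\gf,K)$-map $V\twoheadrightarrow U\hookrightarrow F_K$ extends to a continuous $G$-map $V^\infty\to F$. But the universal property, as defined in this section of the paper, says only: \emph{for any SF-globalization $E$ of $V$ itself}, the identity $V\to V$ extends to a continuous map $V^\infty\to E$. Your $F$ is an SF-globalization of $U=V/W$, not of $V$, and your map $V\to F_K$ is not the identity. What you are really invoking is the stronger lifting statement of Proposition \ref{p=ext} -- that every $(\gf,K)$-morphism $V\to F_K$ into an $\S(G)$-representation lifts to $\Hom_G(V^\infty,F)$ -- and that proposition appears only in Section~9, after the Casselman--Wallach theorem has been established. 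Quoting it here would be circular, or at least would leave the reader unsure that no circularity is involved.

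The gap is fillable, and in a way that reconnects to the paper's own proof. From the construction $V^\infty=\S(G)^k/\S(G)_{\mathbf v}$ and $U^\infty=\S(G)^k/\S(G)_{\bar{\mathbf v}}$ (with $\bar v_j=v_j+W$), one has $\S(G)_{\mathbf v}\subset\S(G)_{\bar{\mathbf v}}$ because membership in these kernels is detected by pairing against $\tilde V$, respectively $\tilde U\subset\tilde V$, using Lemma \ref{Cas}. Hence there is a canonical continuous $G$-surjection $V^\infty\to U^\infty$ extending $V\to U$; composing with the genuine universal property of $U^\infty$ applied to $F$ gives the map you wanted, and the rest of your step~3 (factoring through $\oline W$) goes through unchanged. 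Alternatively, simply adopt the paper's one-line argument. Either way, step~3 as written needs this justification spelled out; as it stands it silently appeals to a result proved later in the paper.
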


\begin{proof} Let us write $(\pi_U, V^\infty/ \oline W)$ for the 
quotient representation obtained from $(\pi, V^\infty)$. Then 
$\Pi(\S(G))V=V^\infty$ implies that $\Pi_U(\S(G))U=V^\infty/\oline W$ and 
hence the assertion. 
\end{proof}

\begin{rem} Suppose that a Harish-Chandra module $V$ admits a maximal 
$G$-continuous norm $p$ with respect to our Sobolev ordering 
$\prec$. Then $V^\infty$ coincides with the smooth vectors of the 
Banach completion of $(V,p)$. 
However, we want to emphasize that the existence of a minimal 
globalization does not automatically imply the existence of a maximal 
norm. 
\end{rem}

\subsection{Dual norms} \label{DNorms}

Let $q$ be a $G$-continuous Hilbert norm on a Harish-Chandra module $V$. The associated 
Sobolev norms $(q_n)_{n\in \N}$ induce an SF-structure on $V$.

\par Recall the notion $q^*$ of dual norm. Our discussion from 
Subsection \ref{KSOB}, then readily yields:

\begin{lem}\label{l=asymp}  Let $q$ and $p$ be $G$-continuous Hermitian  norms on a Harish-Chandra module $V$. 
Then $q\asymp p $ if and only if $q^*\asymp p^*$. 
\end{lem}

\subsection{Maximal Globalizations}

Let us call  an SF-globalization of $V$, say $V_{\rm max}^\infty$,  {\it maximal}
if for any SF-globalization $(\pi, E)$ of $V$ there exists 
a continuous linear $G$-map $E\to V_{\rm max}^\infty$ sitting above 
the identity morphism $V\to V$. 

\par It is clear that maximal globalizations are unique 
provided that they exist. Moreover, in case a maximal globalizations 
of a Harish-Chandra module $V$ exists, then 
$V$ is good if and only if $V^\infty=V_{\rm max}^\infty$. 

\par Let us emphasize that a maximal globalization of $V$ exists 
if and only if there exists a $G$-continuous Hilbert norm $q$ such that 
$q\prec p$ for all $G$-continuous norms $p$ on $V$. 
Since a module $V$ is good if and only if $p\asymp q$ for all 
$G$-continuous norms $p$ and $q$ on $V$, we obtain 
from Lemma \ref{l=asymp} that: 

\begin{lem} A Harish-Chandra module $V$ is good if and only if its
dual $\tilde V$ is good. 
\end{lem}

With the elementary tools developed so far we cannot give a construction of 
maximal globalizations, but we would like to emphasize that the existence of 
maximal globalization would be implied by the matrix coefficient
bounds established in \cite{W}, Sect 4.
For us the following criterion will be sufficient:

\begin{lem}\label{ll2} Let $U$ be a good Harish-Chandra module and 
$U^\infty$ its 
unique SF-globalization. Let $V\subset U$ be a submodule and 
let $\oline V$ be the closure of $V$ in $U^\infty$. Then 
$V_{\rm max}^\infty=\oline V$. 
\end{lem}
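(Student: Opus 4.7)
\emph{Setup.} By the construction of the maximal globalization, $V_{\rm max}^\infty$ is the Fr\'echet completion of $V$ with respect to the Sobolev-dual Hilbert seminorms $\big(((q^n)^*)_{2k}\big)_{n,k\in\N}$, where $(q^n)_{n\in\N}$ is a defining family of $G$-continuous Hilbert seminorms on the minimal globalization $\tilde V^\infty$. Since $U$ is good, $U^\infty$ is simultaneously minimal and maximal, hence the completion of $U$ under some family $\big(((r^n)^*)_{2k}\big)_{n,k\in\N}$ coming from defining $G$-continuous Hilbert seminorms $(r^n)_{n\in\N}$ on $\tilde U^\infty$. Consequently the closure $\oline V\subset U^\infty$ is the completion of $V$ under the restrictions of the seminorms $((r^n)^*)_{2k}$ to $V$. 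The plan is to choose $(q^n)$ and $(r^n)$ compatibly so that these two families of seminorms agree on $V$.

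\emph{Reduction to duality, via Lemma \ref{ll1}.} By the Proposition preceding this lemma, the hypothesis that $U$ is good forces $\tilde U$ to be good, so $\tilde U^\infty$ is its unique SF-globalization. The inclusion $V\hookrightarrow U$ of Harish-Chandra modules dualizes to a surjection $\tilde U\twoheadrightarrow\tilde V$ with kernel $\tilde W=\{\xi\in\tilde U:\xi|_V=0\}$. Lemma \ref{ll1} applied to this quotient identifies $\tilde V^\infty=\tilde U^\infty/\oline{\tilde W}$. I then take each $q^n$ to be the quotient seminorm of $r^n$ through this map; quotients of $G$-continuous Hilbert seminorms remain Hilbertian and $G$-continuous, and the resulting family $(q^n)_{n\in\N}$ still defines the topology of $\tilde V^\infty$.

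\emph{Comparison of dual seminorms on $V$.} Fix $v\in V$. For any $\eta\in\tilde V$ and any lift $\xi\in\tilde U$ one has $\xi(v)=\eta(v)$, since $v$ annihilates $\tilde W$. Combined with the density of $\tilde W$ in $\oline{\tilde W}$ (which lets one realize $q^n(\eta)=\inf\{r^n(\xi):\xi\in\tilde U,\ \xi\mapsto\eta\}$), an elementary two-sided comparison of suprema gives
\[ (q^n)^*(v)=(r^n)^*(v). \]
Because $V$ is a $\U(\gf)$-module, $(k\1-\Delta)^k v$ again lies in $V$, so the identity propagates to all Sobolev versions: $\big((q^n)^*\big)_{2k}(v)=\big((r^n)^*\big)_{2k}(v)$ for every $n,k$ and $v\in V$. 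Hence the Fr\'echet structure on $V$ inherited from $V_{\rm max}^\infty$ coincides with the one inherited as a subspace of $U^\infty$, and taking completions gives $\oline V=V_{\rm max}^\infty$ canonically as Fr\'echet $G$-modules.

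\emph{Main obstacle.} The only substantive point is the quotient-seminorm duality $(q^n)^*(v)=(r^n)^*(v)$ on $V$, a Hahn-Banach-flavored calculation equating a supremum over $\tilde V$ with one over $\tilde U$. It succeeds precisely because every $v\in V$, viewed as a functional on $\tilde U$, factors through $\tilde U\twoheadrightarrow\tilde V$; once this identity is secured, the remainder of the argument is purely formal.
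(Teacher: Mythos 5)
Your proposal is correct and follows essentially the route the paper intends: the paper dispatches this lemma with "the construction of $V_{\rm max}^\infty$ implies immediately," and you have filled in exactly the implicit steps — passing to the good dual $\tilde U$, using Lemma \ref{ll1} to realize $\tilde V^\infty$ as $\tilde U^\infty/\oline{\tilde W}$, and then using the standard fact that for a quotient map the dual seminorm of the quotient seminorm agrees with the restriction of the ambient dual seminorm on the annihilator $\oline{\tilde W}^\perp$, where every $v\in V$ lives. The Sobolev propagation via $\U(\gf)$-stability of $V$ is also the right observation; one might remark that the paper's technical requirement that the defining family be closed under Sobolev semi-norms is only a bookkeeping convenience and does not affect the resulting completion, so your choice of $(q^n)$ as quotients of $(r^n)$ is legitimate.
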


\begin{proof} Let $p$ be a $G$-continuous Hilbert norm on $V$.
Further let $\tilde q$ be a $G$-continuous Hilbert  norm on $U$ and 
$q:=\tilde q|_V$. We have to show that $q\prec p$. 
Let $\pi: \tilde U \to \tilde V$ be the map dual to the inclusion 
$V\to U$. As $\tilde U$ is good we get that $p^* \prec \tilde q^* \circ \pi$. 
It follows that $q\prec p$.\end{proof}

\par We conclude this paragraph with an observation which will be frequently 
used later on. 

\begin{lem}\label{ll3} Let $V_1\subset V_2\subset V_3$ be an inclusion 
chain of Harish-Chandra modules. Suppose that $V_2$ and     
and $V_3/V_1$ are good. Then $V_2/V_1$ is good. 
\end{lem}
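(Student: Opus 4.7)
The plan is to show that the canonical continuous $G$-equivariant map $\alpha: (V_2/V_1)^\infty \to (V_2/V_1)^\infty_{\max}$ (from the minimal to the maximal SF-globalization of $V_2/V_1$) is a topological isomorphism. First I identify both sides concretely. Since $V_2$ is good, Lemma~\ref{ll1} applied to $V_1 \subset V_2$ gives $(V_2/V_1)^\infty = V_2^\infty/\overline{V_1}$, the closure being taken inside the unique SF-globalization $V_2^\infty$. Since $V_3/V_1$ is good, Lemma~\ref{ll2} applied to the submodule $V_2/V_1 \subset V_3/V_1$ gives $(V_2/V_1)^\infty_{\max} = \overline{V_2/V_1}$, the closure being taken inside the unique SF-globalization $(V_3/V_1)^\infty$.

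The key idea is to realize both quotients inside a single ambient SF-module. Let $V_3^\infty$ denote the (unconditionally existing) minimal SF-globalization of $V_3$, and for $j=1,2$ let $\overline{V_j}$ denote the closure of $V_j$ in $V_3^\infty$. The heart of the argument is the claim $(\overline{V_2})_K = V_2$: each $K$-isotype $V_3^\infty[\tau]$ equals the finite-dimensional $V_3[\tau]$, so the continuous $K$-projector $p_\tau$ carries $\overline{V_2}$ into the closed finite-dimensional subspace $V_2[\tau]$, and any $K$-finite vector in $\overline{V_2}$ is the finite sum of its isotypic components, hence lies in $V_2$. The closed $G$-invariant subspace $\overline{V_2}$ then carries an SF-structure and is an SF-globalization of $V_2$, so by goodness of $V_2$ it is canonically isomorphic to $V_2^\infty$ via the identity on $V_2$; under this isomorphism, the closure of $V_1$ inside $\overline{V_2}$ matches $\overline{V_1} \subset V_2^\infty$.

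By Lemma~\ref{ll1} and the goodness of $V_3/V_1$, the natural quotient $q: V_3^\infty \to V_3^\infty/\overline{V_1}$ is identified with the projection $V_3^\infty \to (V_3/V_1)^\infty$. Restrict $q$ to $\overline{V_2}$: because $V_1 \subset V_2$ forces $\overline{V_1} \subset \overline{V_2}$, the kernel of $q|_{\overline{V_2}}$ is $\overline{V_1}$, and the image is exactly $\overline{V_2/V_1}$, since it contains the dense subspace $V_2/V_1$ and is closed in $(V_3/V_1)^\infty$ (its preimage under $q$ is $\overline{V_2} + \overline{V_1} = \overline{V_2}$). Hence $q$ induces a continuous $G$-equivariant bijection $\overline{V_2}/\overline{V_1} \to \overline{V_2/V_1}$, which is a topological isomorphism by the open mapping theorem for Fr\'echet spaces. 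Reinterpreting both sides via the previous paragraph yields $(V_2/V_1)^\infty \cong (V_2/V_1)^\infty_{\max}$, so $V_2/V_1$ is good.

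The step I expect to be most delicate is the $K$-isotype identification $(\overline{V_2})_K = V_2$: it hinges on finite-dimensionality of $K$-isotypes inside an SF-globalization and on the commutation of closures with the continuous $K$-averaging projectors. Once this is in place, everything else is a clean assembly of Lemmas~\ref{ll1}, \ref{ll2}, and the open mapping theorem.
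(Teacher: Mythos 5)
Your proposal is correct and takes essentially the same route as the paper: both realize $(V_2/V_1)^\infty$ and $(V_2/V_1)^\infty_{\max}$ inside the common ambient quotient $\overline{V_3}/\overline{V_1}$ and invoke Lemmas~\ref{ll1} and \ref{ll2} to identify them with $\overline{V_2}/\overline{V_1}$ from opposite sides. You are simply more explicit about two steps the paper leaves implicit, namely that $(\overline{V_2})_K = V_2$ (so goodness of $V_2$ applies) and the open-mapping argument giving the topological isomorphism.
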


\begin{proof}
Let $\oline {V_3}$ be an SF-globalization of $V_3$. 
Let $\oline V_1$, $\oline {V_2}$ be the closures of $V_{1,2}$ in 
$\oline V_3$. 
As $V_2$ is good we have $\oline {V_2} = V_2^\infty$ and thus Lemma
\ref{ll1} implies that
$\oline {V_2}/\oline {V_1} = (V_2/V_1)^\infty$. 
Our second assumption gives $(V_3/V_1)^\infty= \oline{V_3}/ \oline{V_1}$ 
and Lemma \ref{ll2} yields in addition  that 
$\oline{V_2}/ \oline{V_1}= (V_2/V_1)_{\rm max}^\infty$.
\end{proof}

\section{Lower bounds for matrix coefficients}

The objective of this section is to show that 
Harish-Chandra modules are good if and only 
if they feature certain lower bounds for matrix 
coefficients which are uniform in the $K$-type. 

\par As before, given a Harish-Chandra module $V$ we fix a finite dimensional $\cZ(\gf)$-invariant set of generators
$\Xi\subset \tilde V$. We let $\xi_1, \ldots, \xi_k$ be a basis of $\Xi$. 
For $r>0$ we define balls in $G$ by 
$$B_r:=\{ g\in G\mid \|g\|<r\}\, . $$
Set $r_0:=\min\{\|g\|\mid g\in G\}\geq 1$.

\begin{thm}\label{thm=lobo} Let $V$ be a Harish-Chandra module. Then $V$ is good 
if and only if for all $G$-continuous norms $q$ on $V^\infty$
there exists a choice of $\Xi\subset\tilde V$ and constants $c_1, c_2, c_3>0$ 
such that 
$$\Big(\sum_{j=1}^k \int_{B_r} |\xi_j(\pi(g)v)|^2 \ dg\Big)^{1\over 2}  \geq 
{c_2\over (1+|\tau|)^{c_3}}\cdot  q(v)$$ 
for all $\tau \in \hat K$, $v\in V[\tau]$ and $r>\max\{r_0, (1+|\tau|)^{c_1}\}$
\end{thm}
In view of the local Sobolev Lemma this is equivalent to the following pointwise version. 

\begin{thm}\label{thm=lobop} Let $V$ be a Harish-Chandra module. Then $V$ is good 
if and only if for all $G$-continuous norms $q$ on $V^\infty$
there exists a choice of $\Xi$  
and constants $c_1, c_2, c_3>0$ 
such that for all $\tau \in \hat K$ and $v\in V[\tau]$ 
there exist a $g_\tau \in G$ such that $\|g_\tau \|\leq (1+ |\tau|)^{c_1}$
and 
$$\max_{1\leq j\leq k}|\xi_j(\pi(g_\tau)v)|\geq {c_2\over (1+|\tau|)^{c_3}}\cdot  q(v)\, .$$ 
\end{thm}

\begin{proof}\footnote{Throughout this paper we use the convention that 
capital constants $C>0$ might vary from line to line} 
Suppose that $V$ is good. We shall establish the pointwise lower bound 
in Theorem \ref{thm=lobop}.
By assumption there exists an 
$n\in \N$ and $C>0$ such that 
$$|\xi_j(\pi(g)v)|\leq C \cdot \|g\|^n q (v)$$
for all $v\in V^\infty$, $g\in G$ and $1\leq j\leq k$. 
For $N\geq n$ we write $E_N$ for the 
Banach completion of $V^\infty$ with respect to the norm 

$$p^N(v):=\max_{1\leq j\leq k} \sup_{g\in G} {|\xi_j(\pi(g)v)|\over 
  \|g\|^N}\qquad (v\in V)\, .$$ 
We recall that $E_N$ is a Banach module for $G$ (cf. Lemma \ref{l=app}). 

As $V$ is good, we obtain that 

\begin{equation}\label{NN'} V^\infty= E_N^\infty=E_{N'}^\infty\end{equation}
for all $N,N'\geq n$. 
Now fix $N$ and let $N'=N+l >N$.
In view of Proposition \ref{lem=K} 
there exists an $s\in 2\N_0$ and $C>0$ such that

\begin{equation} \label{N,N'} p^N(v)\leq C\cdot p^{N'}_{ s, K}(v)\end{equation}
for all $v\in V^\infty$. 
\par Let us fix $\tau\in \hat K$, $v\in V[\tau]$ and 
$g_\tau \in G$ such that $g\mapsto \max_{1\leq j\leq k}{|\xi_j(\pi(g)v)|\over \|g\|^{N'}}$
becomes maximal at $g_\tau$. 
We then derive from (\ref{N,N'}) that  

$$\max_{1\leq j\leq k} {|\xi_j(\pi(g_\tau)v)|\over \|g_\tau\|^N}\leq C\cdot (1+\|\tau\|)^s
\cdot \max_{1\leq j\leq k}{|\xi_j(\pi(g_\tau)v)|\over \|g_\tau\|^{N+l}}$$
for all $v\in V[\tau]$, i.e.
$$\|g_\tau\|\leq  C\cdot (1+\|\tau\|)^{s\over l}\, .$$
Here $\rho_\kf\in i\tf^*$ is the usual half sum  $\rho_\kf ={1\over 2} \tr \ad_{\kf}$. 

On the other hand (\ref{NN'}) combined with Proposition \ref{lem=K} 
implies likewise that
there exists $C>0$ and $s'>0$ such that 

$$q(v)\leq C\cdot p^{N'}_{s',K}(v)$$
for all $v\in V^\infty$. For $v\in V[\tau]$ we then get 

$$|\xi(\pi(g_\tau)v)|\geq {C\cdot \|g_\tau\|^{N'}\over   
(1+\|\tau\|)^{s'} }\cdot q(v)\, .$$
As $\|g\|\geq 1 $  for all $g\in G$,  the asserted lower bound is established. 

\par Assume now that the lower bound in Theorem \ref{thm=lobo} holds true. 
Let $N>0$ be large enough so that $m_{\xi_j, v}$ is square integrable 
with respect to ${dg\over \|g\|^N}$ and define a Hermitian norm by 
$$p(v)^2:=\sum_{j=1}^k \int_G  |\xi_j(\pi(g)v)|^2 \ {dg\over \|g\|^N}\, . $$
In view of the lower bound we readily obtain that $q\prec p$. On the other hand, 
for large enough $N$ we have that $p\prec q$ and the proof is complete. 
\end{proof}

\section{Minimal principal series representations} \label{mpri}

This section is devoted to the  study of minimal principal 
series representation of $G$ and contains one of our main results. In particular we will show that 
all modules of the minimal principal series are good.

\par Recall the minimal parabolic subgroup $P_{\rm min}=MAN$ of $G$. 
For a finite dimensional $P_{\rm min}$-module $W$ we considered 
the corresponding induced module $V:=I(W)$ with its canonical 
Hilbert globalization $\H:=L^2(W\times_M  K)$. Note that 
$\H^\infty=C^\infty(W\times_M K)$.  
In the sequel 
$\|\cdot\|$ will refer to the $L^2$-norm on $\H$. 
We now state one of the main theorems of this paper: 

\begin{thm} \label{thm=main}Let $V=I(W)$ be a minimal principal series representation 
of $G$ and $\H=L^2(W\times_{P_{\rm min}} K)$ its canonical Hilbert
globalization. Let $\xi_1, \ldots, \xi_k$ be a set of generators
of $V$. 
\begin{enumerate}
\item Then there exists constants, $c_1, c_2, C_1, C_2>0$ such that 
for all $\tau\in \hat K$ and $v_\tau\in V[\tau]$ there exists 
functions $f_{\tau, 1}, \ldots, f_{\tau, k}\in C_c^\infty(G)$ with 
the following properties: 
\begin{enumerate}
\item $\sum_{j=1}^k \Pi(f_{\tau, j})\xi_j = v_\tau$. 
\item $\supp (f_{\tau,j}) \subset \{ g\in G\mid \|g\|<
  C_1(1+\|\tau\|)^{c_1}\}$, for all $1\leq j\leq k$. 
\item $\sum_{j=1}^k \|f_{\tau, j} \|_1 \leq C_2 \cdot \|v_\tau\| 
\cdot (1+\|\tau\|)^{c_2}$, 
where $\|\cdot\|_1$ refers to the norm in $L^1(G)$. 
\end{enumerate}
\item One has $C^\infty(W\times_K W)=V^\infty$ and the surjection  
$$\S(G)^k\mapsto  V^\infty, (f_1, \ldots, f_k)\mapsto \sum_{j=1}^k 
\Pi(f_j)\xi_j$$
admits a continuous linear section $V^\infty \to \S(G)^k$. 
\item $V$ is good. 
\end{enumerate}
\end{thm}
   
\begin{rem} (a) Below we will deduce (ii) from (i). Further 
(iii) follows from (i) as $\widetilde{I(W)}\simeq I(W^*)$. 
In Appendix B we reduce assertion (i)  
to the case of spherical principal series representations and prove it 
in this case. 
\par\nin (b) The constant $c_1$  can be made 
explicite. Certainly it depends on the particular norm 
$\|\cdot\|$ on $G$. Let us fix a specific $K\times K$-invariant norm, namely 
$$\|a\|=\sum_{w\in W} a^{w\rho} \qquad (a\in A)$$
with $\rho\in \af^*$ the Weyl half sum. Then our proof shows that any choice of $c_1$ with  
$${1\over 2} c_1>\dim \af =\rank_\R (G)$$
is  possible. 
\par The constant $c_2>0$ depends only on the growth rate of the $P_{\rm min}$-representation $W$. 
\end{rem}

In view of Casselman's embedding theorem  we can embed every Harish-Chandra
module $V$ into a minimal principal series modules $I(W)$. As $I(W)$ is good 
by Theorem \ref{thm=main} (iii) we thus conclude from Lemma \ref{ll2}:

\begin{cor}\label{cor=mm}  Every Harish-Chandra module $V$ admits a maximal globalization
$V^\infty_{\rm max}$. In particular, $V$ admits a unique minimal and maximal 
$G$-continuous norm with respect to the Sobolev order $\prec$.  
\end{cor}

\medskip  
\noindent {\bf Proof of Theorem \ref{thm=main} (ii).}  Assuming 
Theorem \ref{thm=main}(i) we are going to establish (ii). 
For any $\tau\in \hat K$, let $v_{\tau, 1}, \ldots, v_{\tau, l(\tau)}$
be an orthonormal basis of the $\tau$-isotypical component 
$\H[\tau]=V[\tau]$. Let $v\in V^\infty=\H^\infty$ be a smooth vector, that is 
$v=\sum_{\tau\in  \hat K} \sum_{j=1}^{l(\tau)} c_{\tau, j} v_{\tau,j}\in \H$ and 
for all $N>0$ one has 
\begin{equation} \label{Kest} 
\sum_{\tau\in \hat K} \sum_{j=1}^{l(\tau)} |c_{\tau, j}| (1+\|\tau\|)^N <\infty\, .\end{equation} 
Given $\tau$ and $1\leq j\leq l(\tau)$ we choose $f_{\tau,j, 1}, 
\ldots, f_{\tau_,j, k}$ as in (i),
(a)-(c),  that is $\sum_{i=1}^k \Pi(f_{\tau, j, i} )\xi_i= v_{\tau,  j}$ etc. 
\par For all $1\leq i\leq k$ we set $f_i:=\sum_{\tau\in \hat K}
\sum_{j=1}^{l(\tau)} c_{\tau, j} f_{\tau,j, i}$. 
We first claim that $\int_G |f_i(g)| \cdot \|g\|^r\ dg <\infty$ for all 
$r>0$. In fact, 
\begin{align*} \int_G |f_i(g)| \cdot \|g\|^r\ dg &\leq \sum_{\tau}\sum_j
  |c_{\tau, j}|  
\int_G |f_{\tau,j, i}(g)| \cdot \|g\|^r\ dg \\ 
&\leq \sum_{\tau, j} |c_{\tau, j}| \int_{\{\|g\|\leq C_1 (1+\|\tau\|^{c_1})\}} 
|f_{\tau,j,i}(g)| \cdot \|g\|^r\
  dg\\ 
&\leq \sum_{\tau, j}  C_1^r (1+\|\tau\|)^{r c_1}  |c_{\tau, j}| \int_G |f_{\tau,j, i}(g)|  dg\\ 
&\leq C_1^rC_2  \sum_{\tau, j}  (1+\|\tau\|)^{r c_1+ c_2}  |c_{\tau, j}|  
\end{align*} 
which is finite in view of (\ref{Kest}). 
\par Note that $\sum_{j=1}^k \Pi(f_j)\xi_j = v$. Now for each $1\leq j\leq k$ we choose a $K\times K$-finite test function
$\phi_j$ such that $\Pi(\phi_j) \xi_j=\xi_j$. This allows us to replace $f_j$ 
by $f_j*\phi_j$ and thus we may assume that  $f_j\in L^1(G, \|g\|^r
dg)^\infty$ for all $r>0$. 
Here ``$\infty$'' refers   the smooth vectors of the right regular
representation of $G$ on $L^1(G, \|g\|^r dg)$. 
In view of Remark \ref{rem=r=s} we obtain 
$f_j\in \S(G)$ as to be shown.  
\qed

\section{Reduction steps: extensions, tensoring and induction}

In this section we will show that ``good'' is preserved by induction, tensoring with 
finite dimensional representations and as well by extensions. 
We would like  to emphasize that these results are not new can be found, 
mostly we different proofs,  for instance in \cite{W2},  
Sect. 11.7.

\subsection{Extensions}
 
\begin{lem}\label{lem=3} Let 
$$0\to U\to L \to V\to 0 $$
 be an exact sequence of Harish-Chandra modules. If $U$ and $V$ are good, then $L$ is 
good. 
\end{lem}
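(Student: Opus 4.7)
The approach is to show that, for every SF-globalization $(\pi,E)$ of $L$, the canonical continuous $G$-map $\phi:L^\infty\to E$ extending the identity on $L$ is a topological isomorphism; this forces every SF-globalization of $L$ to be $L^\infty$, so $L$ is good.

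First I would set $\overline U_{\min}\subset L^\infty$ and $E_U\subset E$ to be the closures of $U$. By Lemma \ref{lem=222} both are SF-representations, and so are the quotients $L^\infty/\overline U_{\min}$ and $E/E_U$. This yields a commutative diagram of Fr\'echet $G$-representations with exact rows
\begin{equation*}
\begin{CD}
0 @>>> \overline U_{\min} @>>> L^\infty @>>> L^\infty/\overline U_{\min} @>>> 0 \\
@. @VV{\phi_U}V @VV{\phi}V @VV{\bar\phi}V \\
0 @>>> E_U @>>> E @>>> E/E_U @>>> 0.
\end{CD}
\end{equation*}
The preliminary key step is to verify $E_U\cap L=U$ (and similarly $\overline U_{\min}\cap L=U$), so that $E_U$ and $\overline U_{\min}$ are SF-globalizations of $U$, and to verify $(E/E_U)_K=V$, so that $E/E_U$ is an SF-globalization of $V$. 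Both assertions come from continuity of the $K$-isotypic projectors $p_\tau$ together with admissibility of $L$: a $K$-finite element in $E_U\cap L$ decomposes as a finite sum $\sum_\tau p_\tau(v)$, and each $p_\tau(v)$ is a limit of elements of the closed finite-dimensional subspace $U[\tau]$, hence lies in $U[\tau]$; similarly a $K$-finite class $\bar w\in(E/E_U)[\tau]$ lifts via $p_\tau$ to $p_\tau(w)\in E[\tau]=L[\tau]\subset L$ and so arises from an element of $L$. By Lemma \ref{ll1} the upper-left and upper-right entries are then the minimal SF-globalizations of $U$ and of $V$, respectively.

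Now goodness of $U$ and $V$ implies that the left and right vertical maps, which extend the identities on $U$ and $V$ between SF-globalizations, are topological isomorphisms. The algebraic five lemma forces $\phi$ to be a bijection, and the open mapping theorem for Fr\'echet spaces promotes the continuous linear bijection to a topological isomorphism. The main technical nuisance I expect is precisely the identification $(E/E_U)_K=V$ — one has to lift a $K$-finite class in the quotient back to a genuinely $K$-finite vector of $E$ via a $K$-projector and then use admissibility of $E$ to land in $L$. Everything else is a routine combination of Lemmas \ref{ll1}, \ref{lem=222}, the five lemma, and the open mapping theorem.
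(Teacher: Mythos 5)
Your proof is correct and takes essentially the same approach as the paper: both pass to the short exact sequence $0\to \overline U\to \overline L\to \overline L/\overline U\to 0$ of SF-globalizations and use the goodness of $U$ and $V$ to identify the outer terms, concluding $\overline L\cong L^\infty$. The paper phrases the conclusion as $\Pi(\S(G))L=\overline L$ rather than invoking the five lemma and the open mapping theorem, and leaves implicit the verification (which you spell out via $K$-isotypic projectors and admissibility) that the closure of $U$ and the quotient $\overline L/\overline U$ really are SF-globalizations of $U$ and $V$.
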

\begin{proof} Let $(\pi, \oline L)$ be a smooth Fr\'echet 
globalization of $L$. 
Define a smooth Fr\'echet globalization $(\pi_U, \oline U)$ of $U$ 
by taking the closure of $U$ in $\oline L$. Likewise we define a smooth 
Fr\'echet globalization $(\pi_V, \oline V)$  of $V=L/U$ by 
$\oline V:= \oline L/ \oline U$. 
By assumption we have $\oline U =\Pi_U(\S(G)) U$ and $\oline V=
\Pi_V(\S(G))V$. 
As $0\to \oline U \to \oline L \to \oline V\to 0$ is exact,  we deduce 
that $\Pi(\S(G))L=\oline L$ as vector spaces. Finally the open mapping theorem
implies that  $\Pi(\S(G))L=\oline L$ as topological vector spaces, i.e. $L$ is good. 
\end{proof}

\par As Harish-Chandra modules admit finite composition series we conclude: 

\begin{cor}\label{cor=red} In order to show that all Harish-Chandra modules are good it is 
sufficient to establish that all irreducible Harish-Chandra modules 
are good. 
\end{cor}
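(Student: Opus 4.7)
The plan is a straightforward induction on the composition length of a Harish-Chandra module, using Lemma \ref{lem=3} as the inductive engine.

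First I would recall that every Harish-Chandra module $L$ admits a finite composition series
\[
0 = L_0 \subset L_1 \subset \cdots \subset L_n = L
\]
with each successive quotient $L_i/L_{i-1}$ irreducible. This is the finite length result cited from \cite{W}, Th.~4.2.6 earlier in the text. The integer $n$ is the composition length of $L$.

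I would then prove by induction on $n$ that $L$ is good, under the standing hypothesis that every irreducible Harish-Chandra module is good. The base case $n=1$ is immediate, since then $L$ is itself irreducible. For the inductive step, consider the short exact sequence
\[
0 \to L_{n-1} \to L \to L/L_{n-1} \to 0.
\]
The quotient $L/L_{n-1}$ is irreducible, hence good by hypothesis, while $L_{n-1}$ has composition length $n-1$ and is good by the inductive assumption. Lemma \ref{lem=3} then applies directly and yields that $L$ is good.

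There is no real obstacle here: the only ingredients are the existence of a finite composition series and the closure property from Lemma \ref{lem=3}. The statement is a formal bookkeeping consequence rather than a substantive theorem, so the proof is essentially a two-line induction with the work having already been done in Lemma \ref{lem=3}.
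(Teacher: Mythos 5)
Your proof is correct and matches the paper's approach exactly; the paper simply compresses the same induction on composition length into the one-line remark ``As Harish-Chandra modules admit finite composition series we conclude,'' relying on Lemma \ref{lem=3} as the inductive step just as you do.
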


\subsection{Tensoring with finite dimensional representations}\label{tensor}
This subsection is devoted to tensoring a Harish-Chandra module 
with a finite dimensional representation.

\par Let $V$ be a Harish-Chandra module and $V^\infty$ its minimal globalization. 
Let 
$(\sigma, W)$ be a finite dimensional representation of $G$. 
Set 

$$\bV:= V\otimes W$$
and note that $\bV$ is a Harish-Chandra module as well. 
It is our goal to show that the minimal globalization 
of $\bV$ is given by $V^\infty \otimes W$. 

\par Let us fix a covariant inner product  $\la\cdot, \cdot\ra$ 
on $W$. Let  
$w_1, \ldots, w_k$ be a corresponding orthonormal basis of $W$. 
With that we define the $C^\infty(G)$-valued $k\times k$-matrix 
$$\mathfrak{S}:=\left(\la \sigma(g)w_i, w_j\ra\right)_{1\leq i, j\leq k}$$
and record the following:

\begin{lem} With the notation introduced above, the following assertions
hold: 
\begin{enumerate} 
\item The map 
$$\S(G)^k \to \S(G)^k, \ {\bf f}=(f_1,\ldots, f_k)\mapsto 
\mathfrak{S} ({\bf f})$$
is a linear isomorphism. 
\item The map 
$$[C_c^\infty (G)]^k \to [C_c^\infty(G)]^k, 
\ {\bf f}=(f_1,\ldots, f_k)\mapsto 
\mathfrak{S} ({\bf f})\, .$$
is a linear isomorphism. 
\end{enumerate}
\end{lem}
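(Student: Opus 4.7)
My plan is to observe that, pointwise, $\mathfrak{S}(g)$ is simply the matrix of the invertible operator $\sigma(g)$ in the orthonormal basis $w_1,\ldots,w_k$. Hence $\mathfrak{S}(g)$ is invertible for every $g\in G$, with $\mathfrak{S}(g)^{-1}$ equal to the matrix of $\sigma(g^{-1})$; in particular the entries $(\mathfrak{S}^{-1})_{ij}(g)=\langle \sigma(g^{-1})w_i,w_j\rangle$ are again matrix coefficients of $\sigma$. Thus the candidate inverse to the map $\mathbf{f}\mapsto\mathfrak{S}(\mathbf{f})$ is the pointwise matrix-vector multiplication $\mathbf{f}\mapsto\mathfrak{S}^{-1}(\mathbf{f})$, and injectivity/surjectivity are reduced to the statement that both operations preserve the function space in question.

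For part (ii) this is immediate: the entries of $\mathfrak{S}$ and of $\mathfrak{S}^{-1}$ lie in $C^\infty(G)$, and pointwise multiplication by a smooth function preserves both smoothness and compact support, so $\mathfrak{S}$ and $\mathfrak{S}^{-1}$ restrict to linear isomorphisms of $[C_c^\infty(G)]^k$. For part (i) I would argue that $\sigma$, being a finite-dimensional representation of $G$, satisfies a moderate growth bound $\|\sigma(g)\|\leq C\|g\|^N$; consequently every matrix coefficient of $\sigma$ is dominated by $C\|g\|^N$. Since $\|g^{-1}\|=\|g\|$, the same bound applies to the entries of $\mathfrak{S}^{-1}$. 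Moreover, the left/right derivatives $L_uR_v m_{v,w}$ of a matrix coefficient $m_{v,w}(g)=\langle \sigma(g)v,w\rangle$ are themselves matrix coefficients of $\sigma$ (for the derived actions on $W$), hence are again polynomially bounded. Applying Leibniz to $L_uR_v\bigl(\mathfrak{S}_{ij} f_j\bigr)$ and using the defining Schwartz semi-norms then shows that multiplication by $\mathfrak{S}$ and by $\mathfrak{S}^{-1}$ both preserve $\S(G)^k$ continuously.

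The only point requiring care is this last bookkeeping: one must check that the seminorms
$\sup_{g\in G}e^{nd(g)}|L_uR_v(\mathfrak{S}_{ij}f_j)(g)|$ remain finite when $f_j\in\S(G)$, which reduces, via Leibniz and the polynomial bound on $\mathfrak{S}$ and its derivatives, to the corresponding Schwartz seminorms of $f_j$ with a shifted exponent. No genuine obstacle arises; the lemma is essentially the observation that $\mathfrak{S}$ is a $\GL_k$-valued function on $G$ whose entries, together with those of its inverse, are of moderate growth together with all derivatives.
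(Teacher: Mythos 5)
Your proof is correct and follows the same basic strategy as the paper (exhibit the pointwise inverse $\mathfrak{S}^{-1}$, observe that its entries are of moderate growth, and conclude that pointwise multiplication by $\mathfrak{S}$ and $\mathfrak{S}^{-1}$ preserve the relevant function spaces). There is, however, a noteworthy difference in how the inverse is handled. The paper's proof asserts that $\det\mathfrak{S}=1$ and deduces invertibility and moderate growth of $\mathfrak{S}^{-1}$ from that; as stated this needs the character $\det\sigma$ of the linear reductive group $G$ to be trivial, which is automatic only after reducing to the semisimple part. Your observation that $\mathfrak{S}(g)^{-1}=\mathfrak{S}(g^{-1})$ (since $\mathfrak{S}(g)$ is, up to transpose, the matrix of $\sigma(g)$ in the orthonormal basis) sidesteps this entirely: the entries of the inverse are again matrix coefficients of $\sigma$, evaluated at $g^{-1}$, and $\|g^{-1}\|=\|g\|$ gives the bound for free. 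This is cleaner and slightly more robust. You also explicitly verify, via $L_uR_v$ of a matrix coefficient being again a matrix coefficient and a Leibniz argument, that the full Schwartz seminorms are controlled — a point the paper's proof leaves implicit. The argument for part (ii) via smoothness and preservation of compact support is correct and matches the paper's implicit reasoning.
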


\begin{proof} First, we  observe that the determinant 
of $\mathfrak{S}$ is $1$ and hence $\mathfrak{S}$ is invertible. 
Second, all coefficients of  $\mathfrak{S}$ and $\mathfrak{S}^{-1}$
are of moderate growth, i.e. dominated by a power of $\|g\|$. 
Both assertions follow.
\end{proof}

\begin{lem} \label{lem=1}Let $V$ be a Harish-Chandra module 
and $(\sigma, W)$ be a finite 
dimensional representation of $G$. Then 
$$\bV^\infty= V^\infty \otimes W\, .$$ 
\end{lem}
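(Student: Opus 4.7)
The plan is to verify directly that $V^\infty \otimes W$, equipped with the diagonal action $\pi \otimes \sigma$, satisfies the universal property of the minimal SF-globalization of $\bV$. First I would record that $V^\infty \otimes W$ is an SF-globalization of $\bV$: it is Fr\'echet (a finite direct sum of copies of $V^\infty$), smooth (tensoring a smooth orbit map with the finite-dimensional representation $W$ produces a smooth orbit map), and its $K$-finite vectors are $V_K \otimes W = V \otimes W$. The universal property of $\bV^\infty$ then yields, for free, a continuous $G$-map $\bV^\infty \to V^\infty \otimes W$ extending the identity on $V \otimes W$, and the remaining task is to show this map is an isomorphism.

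Next I would fix generators $v_1, \ldots, v_m$ of $V$, so that $\{v_i \otimes w_j\}$ generates $\bV$, and present both SF-representations as quotients of the Fr\'echet space $\S(G)^{mk}$ via the continuous $G$-equivariant evaluation maps
\[
\gamma: \S(G)^{mk} \to \bV^\infty, \qquad \gamma': \S(G)^{mk} \to V^\infty \otimes W,
\]
each sending $(f_{ij})$ to $\sum_{ij} \Pi(f_{ij})(v_i \otimes w_j)$ in the respective target. The claim $\bV^\infty = V^\infty \otimes W$ reduces to $\ker \gamma = \ker \gamma'$. Here the matrix $\mathfrak{S}$ enters crucially: expanding $\sigma(g)w_j = \sum_l \mathfrak{S}_{jl}(g) w_l$ inside the integral defining $\gamma'$ produces
\[
\gamma'\bigl((f_{ij})\bigr) = \sum_{i,l} \Pi^V(\tilde f_{il})\, v_i \otimes w_l, \qquad \tilde f_{il} := \sum_j f_{ij}\, \mathfrak{S}_{jl},
\]
so that $\gamma'((f_{ij})) = 0$ is equivalent to $\sum_i \Pi^V(\tilde f_{il})\, v_i = 0$ in $V^\infty$ for each $l$.

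With that identification, the comparison of kernels becomes a matrix-coefficient computation. The inclusion $\ker \gamma \subset \ker \gamma'$ is automatic, since $V^\infty \otimes W$ is an SF-globalization of $\bV$. For the reverse, by the construction of $\bV^\infty$ together with Lemma \ref{Cas}, $(f_{ij}) \in \ker \gamma$ if and only if for every $\xi \in \tilde V \otimes W^*$ the integral $\sum_{ij} \int_G f_{ij}(g)\, \xi(\pi(g)(v_i \otimes w_j))\, dg$ vanishes. Writing $\xi = \eta \otimes \tau$ and using
\[
\xi\bigl(\pi(g)(v_i \otimes w_j)\bigr) = m_{\eta, v_i}(g)\, \tau(\sigma(g) w_j) = m_{\eta, v_i}(g) \sum_l \tau(w_l)\, \mathfrak{S}_{jl}(g),
\]
this integral collapses to $\sum_l \tau(w_l)\, \eta\bigl(\sum_i \Pi^V(\tilde f_{il}) v_i\bigr)$, which vanishes under the hypothesis $\gamma'((f_{ij})) = 0$ because $\eta$ is continuous on $V^\infty$ (Lemma \ref{cdual}). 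Hence $\ker \gamma = \ker \gamma'$, and the induced continuous bijection $\bV^\infty \to V^\infty \otimes W$ is a topological isomorphism of Fr\'echet spaces by the open mapping theorem.

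The main conceptual obstacle is translating the diagonal-action evaluation in $V^\infty \otimes W$ into a formula expressed purely in terms of the $V$-action. The $\mathfrak{S}$-multiplication automorphism from the preceding lemma accomplishes exactly this in one stroke; once it is in hand the matrix-coefficient comparison with $\ker \gamma$ is routine, and the uniqueness of the minimal globalization finishes the argument.
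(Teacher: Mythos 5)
Your reduction ``$\bV^\infty = V^\infty\otimes W$ reduces to $\ker\gamma = \ker\gamma'$'' silently assumes that $\gamma'$ is surjective (that is where you write ``present both SF-representations as quotients of $\S(G)^{mk}$''), and that surjectivity is in fact the substantive content of the lemma. The kernel equality you prove is comparatively soft: $\ker\gamma\subset\ker\gamma'$ is immediate from the universal property, and $\ker\gamma'\subset\ker\gamma$ follows from the very same matrix-coefficient/analyticity argument used in the paper to show that $\S(G)_{\bf v}$ is globalization-independent (embed $V^\infty\otimes W$ into a Banach globalization $\H\otimes W$ and apply Lemma \ref{Cas}). Equal kernels give injectivity of the universal map $T:\bV^\infty\to V^\infty\otimes W$, but its image is $\im\gamma'$, and your appeal to the open mapping theorem needs $T$ to be a bijection, not merely injective.

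The paper's proof is precisely a proof of surjectivity: it shows $v\otimes w_j\in\tilde\Pi(\S(G))\bV$ for every $v\in V^\infty$ and every $j$, using the $\mathfrak{S}$-lemma to solve $\mathfrak{S}^t({\bf f}) = (f,0,\dots,0)$. You actually have the needed ingredient already: your rewriting $\gamma'((f_{ij})) = \sum_{i,l}\Pi^V(\tilde f_{il})\,v_i\otimes w_l$ with $\tilde f_{il} = \sum_j f_{ij}\mathfrak{S}_{jl}$, combined with the \emph{invertibility} of the $\mathfrak{S}$-multiplication on $\S(G)^{mk}$ (the preceding lemma) and the surjectivity of the evaluation map $\S(G)^m\to V^\infty$ defining the minimal globalization, gives $\im\gamma' = V^\infty\otimes W$ in one line. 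As written, though, you never invoke the invertibility of $\mathfrak{S}$ or address the image, so the proposal establishes the easy half and leaves the hard half asserted.
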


\begin{proof} We denote by $\pi_1=\pi\otimes \sigma$ the tensor 
representation of $G$ on $V^\infty\otimes W$. 
It is sufficient to show that 
$v\otimes w_j$ lies in $\Pi_1(\S(G))\bV$ for all 
$v\in V^\infty$ and $1\leq j\leq k$.

\par Fix $v\in V^\infty$. It is no loss of generality to 
assume that $j=1$. 
By assumption we find $\xi\in V$ and 
$f\in \S(G)$ such that $\Pi(f)\xi=v$. 
\par We use the previous lemma and obtain an ${\bf f}=
(f_1, \ldots, f_k)\in \S(G)^k$ such that 
$$\mathfrak{S}^t({\bf f})= (f, 0, \ldots, 0)\, .$$ 
We claim that 

$$ \sum_{j=1}^k \tilde \Pi(f_j) (\xi\otimes w_j)= v\otimes w_1\, .$$
In fact, contracting the left hand side with $w_i^*=\la \cdot, w_i\ra$ we get 
that 

\begin{align*} 
(\mathrm{id}\otimes w_i^*)\left(\sum_{j=1}^k \Pi_1(f_j) (\xi\otimes w_j)
\right)&= \sum_{j=1}^k \int_G f_j (g) \la \sigma(g)w_j, w_i\ra 
\pi(g)\xi \ dg \\
&= \delta_{1i} \cdot \int_G f(g) \pi(g)\xi\ dg = \delta_{1i}\cdot v 
\end{align*} 
and the proof is complete.
\end{proof}
 
\begin{prop}\label{prop=bi} Let $V$ be a good Harish-Chandra module 
and $(\sigma, W)$ be a finite 
dimensional representation of $G$. Then $\bV= V\otimes W$ 
is good.
\end{prop}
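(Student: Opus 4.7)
The plan is to show that the minimal globalization $V^\infty\otimes W$, which Lemma \ref{lem=1} already identifies as $\bV^\infty$, also satisfies the universal property of the \emph{maximal} globalization of $\bV$. Since $V$ is good, its maximal and minimal SF-globalizations coincide with $V^\infty$, and the earlier characterization ``$V$ is good iff $V^\infty=V_{\rm max}^\infty$'' then immediately implies that $\bV$ is good.

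To verify the maximal property of $V^\infty\otimes W$, I take an arbitrary SF-globalization $(\tilde\pi,\bE)$ of $\bV=V\otimes W$ and build a continuous $G$-equivariant map $\bE\to V^\infty\otimes W$ extending the identity on $K$-finite vectors. The idea is to pass to $\bE\otimes W^*$, which is an SF-globalization of the Harish-Chandra module $V\otimes W\otimes W^*$ since $W^*$ is finite-dimensional. The canonical contraction $c\colon V\otimes W\otimes W^*\to V$, $v\otimes w\otimes w^*\mapsto\langle w^*,w\rangle v$, is a $G$-equivariant surjection of Harish-Chandra modules; its kernel $U$ is closed in the $K$-finite vectors, and taking the closure $\bar U$ of $U$ in $\bE\otimes W^*$ and forming the quotient $(\bE\otimes W^*)/\bar U$ produces an SF-globalization of $V$ by Lemma \ref{lem=222}. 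Since $V$ is good, this globalization is canonically isomorphic to $V^\infty$, so we obtain a continuous $G$-map $\Phi\colon\bE\otimes W^*\to V^\infty$ extending $c$.

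Now I invoke the finite-dimensional adjunction $\Hom(\bE\otimes W^*,V^\infty)\simeq\Hom(\bE,V^\infty\otimes W)$, valid because $W$ is finite-dimensional so that $\bE\otimes W^*\cong \bE^{\dim W}$ as Fr\'echet spaces and $V^\infty\otimes W=\Hom(W^*,V^\infty)$ canonically. This converts $\Phi$ into a continuous map $\Psi\colon\bE\to V^\infty\otimes W$. A direct check, using $G$-equivariance of $\Phi$ and the diagonal $G$-action on both tensor products, shows $\Psi$ is $G$-equivariant; and under the adjunction the contraction $c$ on Harish-Chandra modules corresponds to $\id_{V\otimes W}$, so $\Psi$ restricts to the identity on the $K$-finite vectors $V\otimes W$.

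The only real subtlety is the compatibility check at the topological level: one must confirm that $\bE\otimes W^*$ is genuinely SF (automatic since $W^*$ is finite-dimensional), that $\bar U$ is a closed $G$-invariant subspace with $(\bE\otimes W^*)/\bar U$ globalizing $V$ on the nose (Lemma \ref{lem=222} plus density of the $K$-finite vectors), and that the adjunction preserves both continuity and $G$-equivariance. None of these should be obstacles, since $W$ and $W^*$ are finite-dimensional, but writing out the identification $\Hom(W^*,V^\infty)\cong V^\infty\otimes W$ as $G$-modules with its matching topology is the one place where the argument is more than formal bookkeeping. Once $\Psi$ is in hand, the maximal property of $V^\infty\otimes W$ is established, and combining with Lemma \ref{lem=1} finishes the proof.
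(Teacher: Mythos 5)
Your proof is correct, and it fills in an argument the paper leaves implicit (the paper merely asserts ``it is easy to see that the maximal and minimal globalization of $\bV$ coincide''). The paper's intended shortcut is most likely via the duality construction of $V_{\rm max}^\infty$: one applies Lemma \ref{lem=1} to $\tilde V$ and $W^*$ to get $\tilde\bV^\infty=\tilde V^\infty\otimes W^*$, and then observes that dualizing commutes with tensoring by a finite-dimensional module, so $\bV_{\rm max}^\infty=V_{\rm max}^\infty\otimes W=V^\infty\otimes W=\bV^\infty$. Your route is more direct: rather than invoking the duality construction, you verify the universal property of the maximal globalization by hand, passing to $\bE\otimes W^*$, quotienting by the closure of $\ker(c)$, using that $V$ is good to land in $V^\infty$, and then transposing via the finite-dimensional tensor-hom adjunction. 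This has the merit of not depending on the specific duality description of $V_{\rm max}^\infty$ and makes the role of goodness of $V$ completely transparent.

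One small point worth making explicit when writing this up: when you form $(\bE\otimes W^*)/\bar U$ and claim it is a globalization of $V$, the key fact is that the $K$-isotypical components of $\bE\otimes W^*$ are finite dimensional (as $\bV\otimes W^*$ is a Harish-Chandra module), hence closed; this guarantees $\bar U\cap(\bE\otimes W^*)_K=U$ and that the $K$-finite vectors of the quotient are exactly $(\bV\otimes W^*)/U\simeq V$, so one is genuinely in the situation where goodness of $V$ applies. With that noted, the argument is complete and correct.
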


\begin{proof} It is easy to see that maximal and minimal Sobolev norms
(with respect to $\prec$) on $V$ induce maximal and minimal 
Sobolev norms on $\bV$. The assertion follows. 
\end{proof}

\subsection{Induction}
Let $P\supset P_{\rm min}$ be a parabolic subgroup with 
Langlands decomposition 
$$P=N_P A_P M_P\, .$$
Note that $A_P<A$,  $M_P A_P = Z_G(A_P)$ and 
$N=N_P \rtimes (M_P \cap N)$. 
For computational purposes it is useful to recall that 
parabolics $P$ above $P_{\rm min}$ are parameterized by subsets
$F$ of the simple roots $\Pi$ in $\Sigma(\af, \nf)$. We then often write 
$P_F$ instead of $P$, $A_F$ instead of $A_P$ etc. 
The correspondence $F\leftrightarrow P_F$ is such that 
$$A_F=\{ a\in A\mid (\forall \alpha\in F)\  a^\alpha=1\}\, .$$
We make an emphasis on the two extreme cases for $F$, namely: 
$P_\emptyset=P_{\rm min}$ and $P_\Pi=G$. 

\par In the sequel we write $\af_P$, $\nf_P$  for the Lie algebras
of $A_P$ and $N_P$ and denote by $\rho_P\in \af_P^*$ the usual half sum. 
Note that $K_P:=K\cap M_P$ is a maximal compact subgroup of $M_P$.
Let $V_\sigma$ be a Harish-Chandra module for $M_P$ and 
$(\sigma, V_\sigma^\infty)$ its minimal SF-globalization. 

\par For $\lambda\in (\af_P)_\C^*$ we define as before 
the smooth principal series with parameter $(\sigma, \lambda) $ by 
\begin{align*} E_{\sigma, \lambda}=\{ f\in 
C^\infty(G, V_\sigma^\infty))\mid & (\forall\  nam\in 
P\ \forall\  g\in G)\\
& f(nam g) = a^{\rho_P+\lambda} \sigma(m) f(g)\}\, .\end{align*}
and representation $\pi_{\sigma, \lambda}$ by right translations
in the arguments of functions in $E_{\sigma, \lambda}$.

In this context we record: 

\begin{prop} \label{p=ind} Let $P\supseteq P_{\rm min}$ be a parabolic subgroup 
with Langlands decomposition $P= N_P A_P M_P$. Let $V_\sigma$ be an
irreducible good Harish-Chandra module for $M_P$. 
Then for all 
$\lambda\in (\af_P)_\C^*$ the induced Harish-Chandra module 
$V_{\sigma, \lambda}$ is good. In particular, $V_{\sigma, \lambda}^\infty=
E_{\sigma, \lambda}$. 
\end{prop}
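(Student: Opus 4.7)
The plan is to exhibit $E_{\sigma,\lambda}$ as both the minimal and the maximal SF-globalization of $V_{\sigma,\lambda}$.

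First, I would verify that $E_{\sigma,\lambda}$ is itself an SF-globalization of $V_{\sigma,\lambda}$. Since $V_\sigma$ is good, Corollary \ref{Cor=nuc} says $V_\sigma^\infty$ is a nuclear Fr\'echet SF-module for $M_P$. In the compact picture $E_{\sigma,\lambda}$ identifies with the closed subspace $(C^\infty(K)\hat\otimes V_\sigma^\infty)^{K_P}$ of $K_P$-covariant vectors in a nuclear Fr\'echet space, so is itself nuclear Fr\'echet; moderate growth of $\pi_{\sigma,\lambda}$ follows from standard estimates on the Iwasawa cocycle $\tilde a(kg)$. The $K$-finite vectors are precisely $V_{\sigma,\lambda}$ by Peter--Weyl applied fiberwise together with $(V_\sigma^\infty)_{K_P}=V_\sigma$.

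Next, I would establish minimality, $\Pi_{\sigma,\lambda}(\S(G))V_{\sigma,\lambda} = E_{\sigma,\lambda}$, by generalizing Theorem \ref{th=2} to the vector-valued induced setting. Any $f \in E_{\sigma,\lambda}$ decomposes by $K$-types as $f = \sum_\tau f_\tau$; for each $\tau$ I would produce a Schwartz function $H_\tau \in \S(G)$ and a generator $v_0 \in V_{\sigma,\lambda}$ with $\Pi_{\sigma,\lambda}(H_\tau)v_0 = f_\tau$, and $\S(G)$-seminorms of $H_\tau$ decaying rapidly in $|\tau|$ so that the series converges. Each $H_\tau$ is assembled by convolving a Dirac-type sequence in the $K/K_P$ variable, built exactly as in Theorem \ref{th=1} with a $K_P$-covariant vector-valued generator in place of $\xi_\sigma$, against Schwartz functions on $M_P\subset G$ realizing arbitrary elements of $V_\sigma^\infty$ as $\Pi_\sigma(f)v_\sigma$ for $f\in\S(M_P)$, $v_\sigma\in V_\sigma$ --- a step available precisely because $V_\sigma$ is good. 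Maximality then follows from Lemma \ref{lemm} applied to $V_{\sigma,\lambda}$ and its dual $\tilde{V_{\sigma,\lambda}}\simeq V_{\tilde\sigma,-\lambda}$: the $K$-uniform matrix coefficient lower bounds required to verify its hypotheses are obtained by pairing the spherical-style bounds of Corollary \ref{Cor1} in the $K/K_P$ direction with the bounds for $V_\sigma$ supplied by Proposition \ref{prop=lb}.

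The main obstacle is the minimality step. The scalar argument of Theorem \ref{th=2} hinged on the explicit $K$-spherical function $\xi_\sigma$ with polynomial decay on $\oline N$ and a concrete $A$-scaling acting by homotheties in the non-compact picture; its vector-valued counterpart must braid the $K/K_P$-scaling against the $\S(M_P)$-action on $V_\sigma^\infty$ uniformly across all $K$-types of $V_{\sigma,\lambda}$. Controlling the convergence of the series $\sum_\tau H_\tau$ in $\S(G)$ will require tracking simultaneously how $\|a_{t(\tau)}\|$ grows in $|\tau|$ and how efficiently elements of $V_\sigma^\infty$ can be written as $\Pi_\sigma(f)v_\sigma$ with $f\in\S(M_P)$ of controlled size, particularly since the multiplicity of a $K$-type in $V_{\sigma,\lambda}$ depends on the full $K_P$-spectrum of $V_\sigma^\infty$.
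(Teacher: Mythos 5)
Your strategy is genuinely different from the paper's, and the two are worth contrasting --- but there is a substantive gap in yours that the paper's route is specifically designed to avoid.

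The paper does not try to generalize the Dirac-sequence argument of Theorems \ref{th=1} and \ref{th=2} to vector-valued induction. Instead it uses double induction together with the sub/quotient machinery of Lemmas \ref{ll1} and \ref{ll2}. For maximality, one embeds $V_\sigma$ into a pure minimal principal series $I_\sigma$ of $M_P$; by double induction $V_{\sigma,\lambda}$ then embeds into a module of the form $\bV = V_{\lambda+\mu}\otimes W$, with $V_{\lambda+\mu}$ a spherical principal series and $W$ finite dimensional. Since $\bV$ is already known to be good (Corollary \ref{c=unique} plus Proposition \ref{prop=bi}), Lemma \ref{ll2} identifies the maximal globalization of $V_{\sigma,\lambda}$ as its closure in $\bV^\infty$, and one then checks this closure is $E_{\sigma,\lambda}$. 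Minimality is done dually, realizing $V_\sigma$ as a quotient of a minimal principal series and appealing to Lemma \ref{ll1}. Everything is pushed back onto the two cases where goodness has already been established. Your plan, by contrast, attacks the induced module head on: a direct analogue of Theorem \ref{th=2} for $E_{\sigma,\lambda}$, plus matrix-coefficient lower bounds feeding into Lemma \ref{lemm} for maximality.

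The gap is in the step you yourself flag: you need a vector-valued analogue of Theorems \ref{th=1}/\ref{th=2} --- an explicit Dirac-type sequence, a $K_P$-covariant generator with controlled decay on $\oline{N}$, and uniform control as the $K$-type grows --- but you never construct these. The description ``convolving a Dirac-type sequence in the $K/K_P$ variable against Schwartz functions on $M_P\subset G$'' does not actually define a convolution: $\S(M_P)$ does not act on a $G$-representation by convolution, and the interplay between the $A_P$-scaling (which drives the Dirac sequence) and the $M_P$-direction (which carries the $V_\sigma^\infty$-dependence) is precisely the hard part. Similarly, for maximality your claim to ``pair'' Corollary \ref{Cor1} with Proposition \ref{prop=lb} does not say how to choose the single $g_\tau\in G$ required by Lemma \ref{lemm}, or how the bound is made uniform across the multiplicity of a $K$-type $\tau$ in $V_{\sigma,\lambda}$, which is governed by the entire $K_P$-spectrum of $V_\sigma$. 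Note also that even in Section 7, where the paper does state a vector-valued Theorem \ref{th=1a}, it proves it by the same double-induction reduction to the scalar/finite-dimensional case, not by directly rebuilding the Dirac sequence in the induced picture --- which is evidence that the direct route you sketch is not a ``simple modification'' and would require a genuine new argument.
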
 

\begin{proof} As $\tilde V_{\sigma, \lambda}\simeq V_{\tilde \sigma, -\lambda}$ 
it is sufficient to show that 
$V_{\sigma, \lambda}$ is good. 

\par In the first step we will show that $E_{\sigma, \lambda}$
is the maximal globalization of $V_{\sigma, \lambda}$. 
To begin with let  $N^P:=M_P \cap N$ and $A^P:= M_P \cap A$. Then 
$Q:= N^P A^P M$ is a minimal parabolic subgroup of $M_P$. As 
$V_\sigma$ is irreducible, we find an embedding of $V_\sigma$ into 
a minimal principal series module of $M_P$, 
say $I_\sigma$:
$$I_\sigma =\Ind_Q^{M_P} ({\bf 1} \otimes(\mu+\rho^P)\otimes \gamma) $$ 
with $\mu \in (\af^P)_\C^*$ and $(\gamma, U_\gamma)$ an
irreducible representation of $M$. 
Then 
$L^2(U_\gamma\times_M K_P)$ is a Hilbert globalization of 
$I_\sigma$ and we denote by $\H_\sigma$ the closure of $V_\sigma$ in 
$L^2(U_\gamma\times_M K_P)$. As $V_\sigma$ is good, it follows that 
$V_\sigma^\infty=\H_\sigma^\infty$.
With $\H_\sigma$ we obtain a Hilbert model for $V_{\sigma, \lambda}$ namely
$\H_{\sigma, \lambda}=L^2(\H_\sigma\times_{K_P} K)$. Notice that 
the smooth vectors of $\H_{\sigma, \lambda}$ coincide with  
$E_{\sigma, \lambda}$.

\par We proceed with double induction (see \cite{K}, Ch. VII, \textsection  2 (4))  and obtain an
embedding of $V_{\sigma, \lambda}$ into the minimal principal series 
representation $\bV:= V_{\gamma, \mu+\lambda}$. 
Let us endow  $\bV$ with the Hilbert structure 
induced from the compact model $\bH=L^2(U_\gamma\times_M K)$.  
Observe that the embedding $\H_{\sigma, \lambda}$ to 
$\bH$ is isometric. 
As $\bV$ is good we get that 
the maximal globalization of $V_{\sigma, \lambda}$ is the closure of 
$V_{\sigma, \lambda}$ in $\bV^\infty$. From our discussion it follows that 
this closure is  $\H_{\sigma, \lambda}^\infty=E_{\sigma, \lambda}$. 

\par To conclude the proof we need to show that 
$E_{\sigma, \lambda}$ coincides with the minimal globalization 
of $V_{\sigma, \lambda}$ as well. We proceed dually: start from the 
realization of $V_\sigma$ as a quotient of a minimal principal 
series $J_\sigma$ of $M_P$ etc. As before we will end up with a
Hilbert model $\hat H_{\sigma, \lambda}$ for $V_{\sigma, \lambda}$
with  $\hat H_{\sigma, \lambda}^\infty= E_{\sigma, \lambda}$
and an orthogonal projection of some Hilbert globalization 
$\hat\bH$ of some good tensor product module onto 
$\hat H_{\sigma, \lambda}$. Hence Lemma \ref{ll1} implies that 
$E_{\sigma, \lambda}$ equals the minimal globalization. 
\end{proof}

\section{Reduction steps II: deformation theory and discrete series}

We already know that every irreducible Harish-Chandra modules $V$ can be written
as a quotient $U/H$ where $U$ is good. Suppose that $H$ is in fact a kernel 
of an intertwiner $I: U\to W$ 
with $W$ good. Suppose in addition that  we can deform $I: U\to W$  holomorphically 
(as to be made precise below). Then, provided $U$ and $W$ 
are good we will show that 
$\im I\simeq U/H$ is good. 
In view of the Langlands-classification,  
the assertion that every Harish-Chandra module is good 
then reduces to the case of discrete series representations.

\par This section is organized as follows: we recall the holomorphic deformation theory
of Casselman (see \cite{Cas}, Sect. 9) in a slightly modified form. Then we prove that 
discrete series are good, and, finally, prove the Casselman-Wallach globalization theorem.

\subsection{Deformation theory} 

For a complex manifold $D$ and a Harish-Chandra module $U$ we write 
$\O(D,U)$ for the space of maps $f: D\to U$ such that for all 
$\xi\in \tilde U$ the contraction $\xi\circ f$ is holomorphic. 
Henceforth we will use $D$ exclusively for the open unit disc. 

\par By a holomorphic family of Harish-Chandra modules (parameterized by $D$) 
we understand a family  of Harish-Chandra modules 
$(U_s)_{s\in D}$ such that:

\begin{enumerate}
\item For all $s\in D$ one has $U_s = U_0=:U$ as $K$-modules. 
\item  For all $X\in \gf$, $v\in U$ and $\xi\in \tilde U$ the map 
$s\mapsto \xi (X_s\cdot v)$ is holomorphic. Here we use $X_s$ for the 
action of $X$ in $U_s$. 
\end{enumerate}

Given a holomorphic family $(U_s)_{s\in D}$ we form 
$\U:=\O(D, U)$ and endow it with the following $(\gf,K)$-structure: 
for $X\in \gf$ and $f\in \U$ we set 
$$(X\cdot f)(s):=X_s\cdot f(s)\, .$$  
We emphasize that the algebra multiplication of $\O(D)$ on $\U$ commutes with the
$(\gf, K)$-action.

\par Of particular  interest are the Harish-Chandra modules 
${\bf U}_k:=\U/s^k\U$ for $k\in \N$. To get a feeling 
for this objects let us discuss a few examples for small 
$k$. 

\begin{ex}(a) For $k=1$ the constant term map 
$${\bf U}_1\to U, \ \ f + s\U\mapsto f(0)$$
is an isomorphism of $(\gf, K)$-modules. 
\par (b) For $k=2$ we observe that the map 

$${\bf U}_2 \to U\oplus U, \ \ f+ s^2 \U \mapsto (f(0), f'(0))$$
provides an isomorphism of $K$-modules. The resulting 
$\gf$-action on the right hand side is twisted 
and given by 

$$X\cdot (u_1,u_2)=(Xu_1, Xu_2 + X'u_1)$$
where 
$$X'u:={d\over ds}\Big|_{s=0} X_s\cdot u\, .$$ 
Let us remark that $X'=0$ for all $X\in \kf$. 
\par We notice that ${\bf U}_2$ features the submodule $s\U/s^2\U$ which corresponds
to $\{0\} \oplus U$ in the above trivialization.  
The corresponding quotient  
$(\U/s^2\U)/ (s\U/s^2\U) $ identifies with $U\oplus U/ \{0\}\oplus U
\simeq U$. In particular $\U/s^2\U$ is good if $U$ is good by the 
extension Lemma \ref{lem=3}.  
\end{ex}

From the previous discussion it follows that ${\bf U}_k$ 
is good for all $k\in \N_0$ provided that $U$ is good.

\par Let now $W$ be another Harish-Chandra module and $\W$ a 
holomorphic deformation of $W$ as above. By a morphism 
$\I: \U\to \W$ we understand a family of $(\gf,K)$-maps 
$I_s: U_s \to W_s$ such that for all $u\in U$ and $\xi\in W^*$
the assignments $s\mapsto \xi(I_s(u))$ are holomorphic. 
Let us write $I$ for $I_0$ set $I':={d\over ds}\Big|_{s=0} I_s$ etc. 
We set $H:=\ker I$.

\par We now make two additional assumptions on our holomorphic 
family of intertwiners: 

\begin{itemize} \label{i=1}
\item $I_s$ is invertible for all $s\neq 0$. 
\item There exists a $k\in \N_0$ such that 
$J(s):=s^k I_s^{-1}$ is holomorphic on $D$. 
\end{itemize}

If these conditions are satisfied, then we call $I: U\to W$ 
{\it holomorphically deformable}.   

\par For all $m\in \N$ we write  ${\bf I}_m: {\bf U}_m \to {\bf W}_m$ 
for the intertwiner induced by $\I$. Likewise we define ${\bf J}_m$.

\begin{ex} In order to get a feeling for the intertwiners ${\bf I}_m$
let us consider the example ${\bf I}_2: {\bf U}_2 \to {\bf W}_2$. 
In trivializing coordinates  this map is given by 
$${\bf I}_2(u_1,u_2)= (I(u_1), I(u_2)+ I'(u_1))\, .$$
\end{ex}

We set ${\bf H}_m:= \ker {\bf I}_m\subset {\bf U}_m$. 
For $m<n$ we view ${\bf U}_m$ as a $K$-submodule of ${\bf U}_n$
via the inclusion map 
$${\bf U}_m \to {\bf U}_n , \ \ f + s^m \U \mapsto 
\sum_{j=0}^{m-1} {f^{(j)}(0)\over j!} s^j + s^n \U\, .$$
We write $p_{n,m}: {\bf U}_n \to {\bf U}_m$ for the reverting 
projection (which are $(\gf, K)$-morphisms). 

\par The following Lemma is related to an observation of Casselman, see 
\cite{Cas}, Prop. 9.3. 

\begin{lem} Suppose that $k\in \N_0$ is minimal such that $J(s)$ is holomorphic. Then 
the morphism 
$$ {\bf I}_{2k}|_{{\bf H}_k + s^k\U/ s^{2k}\U}: 
{\bf H}_k + s^k\U/ s^{2k}\U\to  s^k\W/ s^{2k}\W$$
is onto. Moreover, its kernel is given by $s^k {\bf H}_k \subset
s^k\U/ s^{2k}\U$.
\end{lem}

\begin{proof} Clearly, ${\bf I}_{2k}({\bf H}_k + s^k\U/ s^{2k}\U)\subset s^k \W/ s^{2k}\W $ and hence the map is defined. 
Let us check that it is 
onto. Let $[w]\in s^k\W/ s^{2k} \W$ and $w\in s^k \W$ be a representative. 
Note that $\I^{-1}|_{s^k \W}: s^k \W \to \U$ is defined. 
Set $u:= \I^{-1}(w)$ and write $[u]$ for its equivalence class 
in ${\bf U}_{2k}$. Then $u\in {\bf H}_k + s^k\U/ s^{2k}\U$ and  
${\bf I}_{2k}([u])=[w]$ which shows that the map is onto.  

\par A simple verification shows that $s^k {\bf H}_k$ lies in the kernel. 
Further, the first part of the proof shows that $p_{2k, k}\circ {\bf I_{2k}}^{-1}: 
 s^k\W/ s^{2k}\W \to {\bf H}_k$ is an injection. Thus  $s^k {\bf H}_k$ is 
the kernel. 
\end{proof}

If we set $V_3:={\bf H}_k + s^k\U/ s^{2k}\U$, $V_2:= s^k\U/ s^{2k}\U$ and 
$V_1:=s^k {\bf H}_k$, the previous Lemma implies 
an inclusion chain 

$$V_1\subset V_2\subset V_3$$
with  
$$V_2/V_1\simeq {\bf U}_k /{\bf H}_k, \qquad V_2\simeq {\bf U}_k 
\qquad\hbox{and}\qquad  V_3/V_1\simeq {\bf W}_k\, .$$ 
Hence in combination with the squeezing Lemma \ref{ll3}
we obtain that ${\bf U}_k/ {\bf H}_k$ is good if $U$ and $W$ are good. 
\par We wish to show that $U/H$ is good. 
Write $H_{k,1}:=p_{k,1}({\bf H}_k)$ for the projection of ${\bf H}_k$ to 
${\bf U}_1\simeq U$. Note that $H_{k,1}$ is a submodule of $H$. 
We arrive at the exact sequence 

$$0\to U/H\simeq s^{k-1}U/ s^{k-1} H \to {\bf U}_k/ {\bf H}_k \to U/H_{k,1}\to 0\, .$$
But $U/H$ is a quotient of $U/H_{k,1}$. Thus putting an SF-topology on 
$U$ we get one on $H$, ${\bf U}_k$, ${\bf H}_k$,   ${\bf U}_k/ {\bf H}_k$
and  $U/H_{k,1}$. As a result the induced topology on $U/H$ is both a 
sub and a quotient of the good topology on ${\bf U}_k/ {\bf H}_k$. 
Hence $U/H$ is good.   

\par We summarize our discussion. 

\begin{prop}\label{p=def} Suppose that $I: U\to W$ is an intertwiner of 
good Harish-Chandra modules which allows
holomorphic deformations $\I: \U \to \W$. 
Then $\im I$ is good.
\end{prop}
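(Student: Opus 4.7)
The plan is to realize $\im I \simeq U/H$ (with $H := \ker I$) both as a closed submodule and as a quotient of the single Harish-Chandra module ${\bf U}_2/{\bf H}_2$, whose goodness will be established using the preceding Lemma and the Squeezing Lemma \ref{ll3}.

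The first preparatory step is to show that every truncation ${\bf U}_k = \U/s^k\U$ is good. Since $s\U/s^k\U \simeq {\bf U}_{k-1}$ as $(\gf, K)$-modules, an induction on $k$ using the extension Lemma \ref{lem=3} applied to $0 \to {\bf U}_{k-1} \to {\bf U}_k \to U \to 0$ yields the claim, with base case ${\bf U}_1 \simeq U$ good by hypothesis; the same reasoning handles each ${\bf W}_k$. Next, apply the preceding Lemma to the inclusion chain
\begin{equation*}
V_1 := s^k {\bf H}_k \ \subset\ V_2 := s^k\U/s^{2k}\U \ \subset\ V_3 := {\bf H}_k + s^k\U/s^{2k}\U
\end{equation*}
inside ${\bf U}_{2k}$, where $k$ is the order of the pole of $I_s^{-1}$. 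The identifications $V_2 \simeq {\bf U}_k$ and $V_3/V_1 \simeq {\bf W}_k$ combined with Lemma \ref{ll3} yield goodness of $V_2/V_1 \simeq {\bf U}_k/{\bf H}_k$.

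For the descent from goodness of ${\bf U}_k/{\bf H}_k$ down to $U/H \simeq {\bf U}_1/{\bf H}_1$, it suffices to treat the case $k=2$ (higher $k$ follow by iteration). Set $H_{2,1} := p_{2,1}({\bf H}_2)$ and verify that $H_{2,1}$ is a submodule of $H$; then derive the short exact sequence
\begin{equation*}
0 \to U/H \,\simeq\, sU/sH \longrightarrow {\bf U}_2/{\bf H}_2 \longrightarrow U/H_{2,1} \to 0,
\end{equation*}
which displays $U/H$ as a closed $(\gf,K)$-submodule of the good module ${\bf U}_2/{\bf H}_2$. Since $H_{2,1} \subset H$, composing with the surjection $U/H_{2,1} \twoheadrightarrow U/H$ shows that $U/H$ is simultaneously a quotient of ${\bf U}_2/{\bf H}_2$ via a different map. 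Any SF-globalization of $U$ propagates (via Lemma \ref{lem=222}) to coherent SF-topologies on ${\bf U}_2$, ${\bf H}_2$, ${\bf U}_2/{\bf H}_2$ and $U/H_{2,1}$; the uniqueness of the SF-topology on ${\bf U}_2/{\bf H}_2$ then pinches the SF-topology on $U/H$ between a sub- and a quotient-topology arising from the same good module, forcing $U/H$ to be good.

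The main obstacle is precisely this final pinching step: the submodule and quotient realizations of $U/H$ inside ${\bf U}_2/{\bf H}_2$ use genuinely different maps, so one must argue that the induced sub- and quotient-SF-topologies on the underlying $(\gf,K)$-module actually coincide. The cleanest way to finish is via the minimal/maximal globalization language of Section 4: the sub-inclusion produces a canonical continuous map from the minimal globalization $(U/H)^\infty$ into the closure of $sU/sH$ in the unique SF-completion of ${\bf U}_2/{\bf H}_2$, while the quotient realization identifies the maximal globalization $(U/H)_{\max}^\infty$ with a canonical quotient of the same SF-completion; these two canonical globalizations must agree, because they both factor through the good ${\bf U}_2/{\bf H}_2$, and hence minimal equals maximal for $U/H$.
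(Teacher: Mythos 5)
Your overall approach matches the paper's exactly: reduce to showing ${\bf U}_k$ is good by induction and the extension lemma, apply the preceding lemma on ${\bf I}_{2k}$ together with the squeezing Lemma \ref{ll3} to the chain $V_1\subset V_2\subset V_3$ to get ${\bf U}_k/{\bf H}_k$ good, and then descend via the exact sequence $0\to sU/sH \to {\bf U}_2/{\bf H}_2\to U/H_{2,1}\to 0$. Your penultimate sentence (``the uniqueness of the SF-topology on ${\bf U}_2/{\bf H}_2$ then pinches\ldots'') is the right mechanism, and you are right to flag that it needs justification. But your final sentence, which is supposed to supply that justification, has the roles of the minimal and maximal globalizations reversed and replaces the real argument by a non-argument.

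Concretely: by Lemma \ref{ll2}, the closure of a $(\gf,K)$-submodule in the unique SF-globalization of a \emph{good} module realizes the \emph{maximal} globalization of that submodule; so the closure of $sU/sH$ in $({\bf U}_2/{\bf H}_2)^\infty$ is $(U/H)_{\rm max}^\infty$, not merely a target for $(U/H)^\infty$. Dually, by Lemma \ref{ll1}, the Hausdorff quotient of a minimal (= unique, since good) SF-globalization is the \emph{minimal} globalization of the quotient; so passing through $U/H_{2,1}$ realizes $(U/H)^\infty$, not $(U/H)_{\rm max}^\infty$. Your sentence has these two assignments swapped. Moreover, ``they must agree because they both factor through the good ${\bf U}_2/{\bf H}_2$'' is not a proof: a sub and a quotient of the same Fr\'echet module are in general different. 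The actual resolution of the concern you raised is that when all of $H$, ${\bf U}_2$, ${\bf H}_2$, ${\bf U}_2/{\bf H}_2$, $U/H_{2,1}$ are topologized coherently starting from the unique good globalization $\oline U$ of $U$, the closed-subspace realization of $U/H$ and the quotient realization of $U/H$ are computed to be the \emph{same} Fr\'echet space $\oline U/\oline H$. Lemma \ref{ll2} then says this single space is $(U/H)_{\rm max}^\infty$, and Lemma \ref{ll1} says it is $(U/H)^\infty$; hence they coincide and $U/H$ is good. With the labels corrected and the ``same underlying space'' observation made explicit, your proof is the paper's.
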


\subsection{Discrete series}

The objective of this subsection is to show that every Harish-Chandra module belonging to 
the discrete series is good. 

\par Let $Z<G$ be the center of $G$. Throughout this section $V$ shall denote a unitarizable irreducible Harish-Chandra module, i.e. 
there exists a unitary irreducible globalization  $(\pi, \H)$ of $V$. 
We say that $V$ is {\it square integrable} or {\it belongs to the discrete series} provided for all 
$v\in V$ and $\xi\in \tilde V$ one has 

$$\int_{G/Z} |m_{\xi, v}(g)|^2 \ d(gZ)<\infty\, .$$
In this situation, there exists a constant $d(\pi)$, the formal degree, such that for every unitary 
norm $p$ on $V$ one has

\begin{equation}\label{sor}
 {1\over d(\pi)} p(v)^2 p^*(\xi)^2  = \int_{G/Z} |m_{\xi, v}(g)|^2 \ d(gZ)\qquad (v\in V, \xi\in \tilde V)\, .
\end{equation}
\begin{prop}\label{DS} Let $V$ be  a Harish-Chandra module of the discrete series. Then $V$ is good. 
\end{prop}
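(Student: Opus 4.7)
The strategy is to invoke the converse criterion in Lemma \ref{lemm}. Let $(\pi,\H)$ be a unitary irreducible globalization of $V$ with unitary (hence $K$-invariant) Hilbert norm $p$, and fix a $\cZ(\gf)$-stable generating set $\xi_1,\ldots,\xi_k$ of $\tilde V$ consisting of $K$-finite vectors. For $\tau\in\hat K$ and $v\in V[\tau]$ we need to produce $g_\tau\in G$ with $\|g_\tau\|\le(1+|\tau|)^{c_1}$ and $\max_j|m_{\xi_j,v}(g_\tau)|\ge c_2(1+|\tau|)^{-c_3}p(v)$. Once both the bound for $V$ and (by symmetry) its analogue for $\tilde V$ are in place, Lemma \ref{lemm} finishes the proof.

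Two inputs control the matrix coefficients $m_{\xi_j,v}(g)=\xi_j(\pi(g)v)$. First, unitarity gives the pointwise bound $|m_{\xi_j,v}(g)|\le p(v)\,p^*(\xi_j)$, while the discrete series Schur orthogonality yields
$$
\sum_{j=1}^{k}\int_{G/Z}|m_{\xi_j,v}(g)|^{2}\,d(gZ)\;=\;\frac{p(v)^{2}}{d(\pi)}\sum_{j=1}^{k}p^*(\xi_j)^{2}\;=:\;M\,p(v)^{2},
$$
a constant depending only on the fixed $\xi_j$. Second, the Harish-Chandra asymptotic estimate for discrete-series matrix coefficients supplies a pointwise bound of the shape $|m_{\xi_j,v}(g)|\le C(1+|\tau|)^{A}\,p(v)\,p^*(\xi_j)\,\Xi(g)^{1+\delta}$ with $\delta>0$ intrinsic to $\pi$; integrating its square over $\{\|g\|>R\}$ dominates the tail by $P(|\tau|)\cdot R^{-\epsilon}$ for some $\epsilon>0$ and a polynomial $P$.

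Pick $R_\tau=(1+|\tau|)^{c_1}$ with $c_1$ large enough that the tail integral is at most $\tfrac{M}{2}p(v)^{2}$; by pigeonhole some index $j$ satisfies $\int_{\{\|g\|\le R_\tau\}/Z}|m_{\xi_j,v}|^{2}\,d(gZ)\ge\tfrac{M}{2k}p(v)^{2}$. Since $\|g\|\le R_\tau$ forces $d(g)\le c_1\log(1+|\tau|)$, the Haar volume of $\{\|g\|\le R_\tau\}$ is at most $(1+|\tau|)^{C'c_1}$, i.e.\ polynomial in $|\tau|$. Combining this with the trivial mean-value inequality $\sup_{B}|f|^{2}\cdot\mathrm{vol}(B)\ge\int_{B}|f|^{2}$ gives
$$
\sup_{\|g\|\le R_\tau}|m_{\xi_j,v}(g)|^{2}\;\ge\;\frac{M\,p(v)^{2}}{2k\,(1+|\tau|)^{C'c_1}},
$$
which furnishes the required $g_\tau$ with $c_3=C'c_1/2$. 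The dual hypothesis of Lemma \ref{lemm} is obtained by running the identical argument for the contragredient $\tilde V$, whose unitary completion is $\bar{\H}$ and which is again discrete series with dual norm $p^*$.

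The one nonformal ingredient is the exponential pointwise decay with only polynomial-in-$|\tau|$ dependence, which is the main obstacle; it is exactly the content of Harish-Chandra's theory of matrix coefficients for the discrete series (and can also be extracted directly from the $KA^{+}K$ asymptotics combined with $\cZ(\gf)$-finiteness). Everything else — Schur orthogonality, unitary pointwise bound, polynomial volume growth of norm balls, and the mean-value extraction of a sup from an $L^{2}$ lower bound — is elementary.
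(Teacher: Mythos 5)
Your proposal is structurally sound but proceeds by a genuinely different route than the paper, and it leans on a substantially heavier external input. You go through Lemma \ref{lemm}: combine the Schur orthogonality identity with a uniform Harish-Chandra pointwise bound $|m_{\xi,v}(g)|\lesssim (1+|\tau|)^A\,p(v)\,p^*(\xi)\,\Xi(g)^{1+\delta}$ to localize $\|m_{\xi,v}\|_{L^2}$ on a norm-ball of polynomial radius and then extract a sup lower bound by dividing by the volume; the dual hypothesis follows because the contragredient of a discrete series module is again discrete series. This is correct in outline, and you are right that the one nonformal step is the polynomial-in-$|\tau|$ uniformity in the $\Xi^{1+\delta}$ bound. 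That uniformity, however, is exactly the kind of asymptotic-expansion result (Casselman--Mili\v{c}i\'c / Wallach, Ch.\ 4) whose avoidance is the raison d'\^etre of this paper's "elementary" program. The paper's own proof never touches Lemma \ref{lemm} or any asymptotic estimate for matrix coefficients: instead it builds, directly from the orthogonality relations, a family of $G$-continuous Hilbert semi-norms $\rho^n(v)^2=\int_{G/Z}|m_{\xi_n,v}|^2\,\|\Ad(g)\|^{-N}\,d(gZ)$ dominated by the unitary norm, shows via the local Sobolev lemma and the admissibility bound $\dim V[\tau]\lesssim(1+|\tau|)^N$ that the Sobolev scale of these $\rho^n$ recovers the SF-topology of $\H^\infty$, and concludes $\H^\infty=V^\infty_{\max}$; the duality symmetry (\/$\tilde V$ is again discrete series) then gives $V^\infty_{\max}=V^\infty_{\min}$. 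So both proofs exploit Schur orthogonality and self-duality of the discrete series, but where you spend a deep analytic theorem to localize the $L^2$-mass, the paper gets away with only the local Sobolev lemma on a fixed compact neighborhood of the identity. If you want your approach to stand on its own within this paper's framework you would need to either prove the uniform $\Xi^{1+\delta}$ bound from scratch or replace it with a soft localization argument in the spirit of the paper's $\rho^n$ construction.
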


\begin{proof} It is no loss of generality to assume that the center 
$Z<G$ is compact. Choose a minimal principal series $U:=V_{\sigma, \lambda}$ such that 
$V$ embeds into $U$. Let $\xi\in \tilde U$ such that $\xi|_V\neq 0$. Then there exists
an $s_0>0$ such that all matrix coefficients $m_{\xi, v}$ , $v\in U$, belong to 
$L^2(G, \|g\|^{-{s_0}} dg)$. For every $s\in \C$ with $\Re s >s_0$ 
we define a continuous Hermitian form on $U^\infty$ by setting 

$$(v, w)_s:= \int_G m_{\xi, v}(g) \oline{m_{\xi, w}(g)} \|g\|^{-s}\ dg \, .$$
Let us write $B(U^\infty, U^\infty)$ for the topological  vector space
of continuous Hermitian forms on $U^\infty$. We obtain a holomorphic map 
$$\{s\in \C\mid \Re s> s_0\}\to B(U^\infty, U^\infty), \ \ s\mapsto (\cdot, \cdot)_s$$
and in Appendix B we  show that 
it admits a meromorphic continuation to the 
complex plane. 

\par Let $(\cdot, \cdot)$ be the constant part of $s\mapsto (\cdot,\cdot)_s$ at $s=0$. 
Note that for  $v, w\in V$ we have 
$$(v,w)=\int_G m_{\xi, v}(g) \oline{m_{\xi, w}(g)}\ dg\, . $$ 
As, on the other hand $(\cdot, \cdot) \in B(U^\infty, U^\infty)$ and $U^\infty$ is good, we conclude 
from (\ref{sor}) and Lemma \ref{ll2} that the unitary norm $p$ on $V$ is a maximal norm on $V$ w.r.t. $\prec$. 
As $V$ is unitary, $p$ is also minimal. 
\end{proof}

\subsection{Proof of the Casselman-Wallach-Theorem}

\begin{thm} All Harish-Chandra modules are good.
\end{thm}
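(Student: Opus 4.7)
The plan is to combine the Langlands classification with the deformation result (Proposition \ref{p=def}) and the known good modules (discrete series, induced modules from good data).

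First, by Corollary \ref{cor=red} it is enough to treat irreducible Harish-Chandra modules. Fix an irreducible Harish-Chandra module $V$. By the Langlands classification, $V$ is realized as the Langlands quotient (equivalently, as the image of a standard intertwining operator)
$$V \;=\; \operatorname{im}\bigl(I_{P,\sigma,\lambda}\colon E_{\sigma,\lambda}\longrightarrow E_{\bar\sigma,\lambda}\bigr),$$
where $P=N_PA_PM_P$ is a parabolic, $(\sigma,V_\sigma)$ is a discrete series representation of $M_P$, $\lambda\in(\af_P)_\C^*$ lies in the closed positive chamber, and $\bar P=N_{\bar P}A_PM_P$ is the opposite parabolic. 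Here the $E$'s are the underlying Harish-Chandra modules of the (smooth) induced representations. By Proposition \ref{DS} the discrete series module $V_\sigma$ is good, and hence by Proposition \ref{p=ind} both $E_{\sigma,\lambda}$ and $E_{\bar\sigma,\lambda}$ are good Harish-Chandra modules.

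Next I must display a holomorphic deformation of $I_{P,\sigma,\lambda}$ in the sense of Section 8. Choose a generic direction $\mu \in (\af_P)_\C^*$ and set $\lambda_s := \lambda + s\mu$ for $s\in D$. The family $E_{\sigma,\lambda_s}$ is a holomorphic family of Harish-Chandra modules: they all coincide as $K$-modules with $E_{\sigma,\lambda}|_K$, and the $\gf$-action depends polynomially, hence holomorphically, on the Langlands parameter. The standard intertwining operators $I_{P,\sigma,\lambda_s}$ are given by convergent integrals over $N_{\bar P}$ for $\Re\lambda_s$ sufficiently positive and admit a meromorphic continuation to all of $(\af_P)_\C^*$; moreover, for generic $s$ they are isomorphisms (their composition $I_{\bar P}\circ I_P$ equals a meromorphic scalar on each $K$-isotypic component by Harish-Chandra's $c$-function calculation). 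Choosing $\mu$ generically, we can arrange that $s\mapsto I_{P,\sigma,\lambda_s}$ is holomorphic on $D$, invertible for $s\ne 0$, and that $s^k I_{P,\sigma,\lambda_s}^{-1}$ is holomorphic on $D$ for some $k\in\N$. These are precisely the hypotheses of the deformability condition in Section 8.

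With both $E_{\sigma,\lambda}$ and $E_{\bar\sigma,\lambda}$ good and $I_{P,\sigma,\lambda}$ admitting such a holomorphic deformation, Proposition \ref{p=def} then yields that $V=\operatorname{im}(I_{P,\sigma,\lambda})$ is good, completing the proof.

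The main obstacle, and the only non-formal step, is the verification of the two deformation conditions, namely generic invertibility of $I_{P,\sigma,\lambda_s}$ and the bound on the order of its pole at $s=0$. This is a classical fact from the analytic theory of intertwining operators (Knapp--Stein, Vogan--Wallach), and for the exposition one would either cite it directly or, alternatively, embed $V$ into a pure principal series via the subrepresentation theorem and deform the embedding parameter, which is the same mechanism phrased on the submodule side via Lemma \ref{ll2} and Proposition \ref{prop=bi}.
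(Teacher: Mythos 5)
Your strategy is the same as the paper's (reduce to irreducible via Corollary \ref{cor=red}, then combine the Langlands classification with Proposition \ref{p=def}, Proposition \ref{p=ind}, and Proposition \ref{DS}), but there is a concrete inaccuracy that leaves a gap. The Langlands classification as cited in the paper (\cite{K}, Ch.~VIII, Th.~8.54) presents an irreducible $V$ as the Langlands quotient of $E_{\sigma,\lambda}$ with $\sigma$ an irreducible \emph{tempered} representation of $M_P$, not a discrete series representation. Your claim that $\sigma$ may be taken to be discrete series is false in general, and as a consequence the irreducible tempered representations of $G$ itself are not reached by your reduction: for those, $P=G$, there is no proper Langlands quotient, and the deformation mechanism of Proposition \ref{p=def} does not apply.

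The paper closes this gap with an explicit extra step: the Langlands-plus-deformation argument reduces the general irreducible case to the case of $V$ tempered, and then the tempered case is reduced to the square-integrable case by \cite{W}, Ch.~5, Prop.~5.2.5 (every irreducible tempered Harish-Chandra module is a direct summand of a module parabolically induced from discrete series with purely imaginary parameter, and direct summands of good modules are good by combining Lemma \ref{ll1} and Lemma \ref{ll2}), after which Proposition \ref{DS} applies. Your argument would be correct if you replaced ``discrete series'' by ``tempered'' in the Langlands step and inserted this further reduction of tempered to square integrable. The rest of your argument — the holomorphic one-parameter deformation $\lambda_s=\lambda+s\mu$ of the standard intertwining operator, generic invertibility via the $c$-function, and finite pole order so that $s^k I_s^{-1}$ is holomorphic — matches the mechanism the paper intends for Proposition \ref{p=def}.
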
 

\begin{proof} Let $V$ be a Harish-Chandra module. We have to show that 
$V$ is good. In view of Corollary \ref{cor=red}, we may assume that 
$V$ is irreducible. 
Next we use Langland's classification (see \cite{K},Ch. VIII, Th. 8.54) and 
combine it with 
our Propositions on deformation \ref{p=def} and induction 
\ref{p=ind}. This reduces to the case 
where $V$ is tempered. However, the case of tempered readily reduces to 
square integrable (\cite{W}, Ch. 5, Prop. 5.2.5). The case of square integrable Harish-Chandra modules 
was established in Proposition \ref{DS}.
\end{proof}

\subsection{Discussion}

In the introduction we phrased the Casselman-Wallach in several different
ways. One way was the equivalence of categories $\HC$ and $\SAF$ or,
equivalently,  that there is only one Sobolev-equivalence class of $G$-continuous norms on a Harish-Chandra module.  

\par The objective of this subsection is to show that the equivalence 
of categories $\HC \simeq \SAF$ can be slightly refined. 

\par By a {\it marking} of a Fr\'echet space $E$ we shall understand 
an increasing family $(p_n)_{n\in \N}$ of semi-norms which define the 
topology on $E$. Pairs $(E, (p_n)_n)$ will henceforth be called 
marked Fr\'echet spaces -- in the literature one also finds 
the notion of {\it graded} Fr\'echet space (see \cite{Ha}). 

\par By a morphism of marked Fr\'echet spaces $(E, (p_n)_n)$ and $(F,(q_n)_n)$ 
we understand a linear map $T:E\to F$ with the following property: there 
exists a $k\in \N_0$ such that for all $n\in \N$ there exists $C_n>0$ with: 

$$q_n(T(x))\leq C_n p_{n+k} (x) \qquad (x\in E)\, .$$

\par In this sense we obtain the additive category of marked Fr\'echet spaces, say 
$\F_{\rm mark}$. 

\par For an $F$-representation $(\pi, E)$  we are automatically 
led to the notion of a $G$-continuous marking. 
Let us define now $\SAF_{\rm mark}\subset \F_{\rm mark}$ to be the sub-category of smooth admissible 
$F$-representation with respect to a $G$-continuous marking. 
The refined Casselman-Wallach theorem asserts that 

\begin{equation} \label{CWref} \HC\simeq \SAF_{\rm mark}\, .\end{equation}
Note that this immediate from the fact that $V^\infty$ is a quotient of 
$\S(G)^k$ for some $k\in \N$.

\section{Applications}

\subsection{Lifting $(\gf,K)$-morphisms}

Let $(\pi, E)$ be a representation of $G$ on a complete topological vector
space $E$. Let us call  $(\pi, E)$ an {\it $\S(G)$-representation} if 
the natural action of $C_c^\infty(G)$ on $E$ extends to a separately 
continuous action of $\S(G)$ on $E$. 
Some typical examples we have in mind are smooth 
functions of  moderate growth on certain homogeneous spaces. 
Let us mention a few. 

\begin{ex} (a) Let $\Gamma<G$ be a lattice, that is a discrete subgroup 
with cofinite volume. Reduction theory (Siegel sets) allows 
us to control ``infinity'' of the quotient $Y:=\Gamma\bs G$ and 
leads to a natural notion of moderate growth. 
For every $\alpha>0$ there is a natural SF-module 
$C_\alpha^\infty(Y)$ 
of smooth functions on $Y$ with growth rate 
at most $\alpha$. 
The smooth functions
of moderate  growth $C_{\rm mod}^\infty (Y)=\lim_{\alpha\to \infty}
C_\alpha^\infty(Y)$ become a complete  inductive limit 
of the SF-spaces $C_\alpha^\infty(Y)$. Hence $\S(G)$ acts on  
$C_{\rm  mod}^\infty (Y)$. 
\par The space of $K$ and $\cZ(\gf)$-finite 
elements in  $C_{\rm mod}^\infty (Y)$ is referred to as the space of 
automorphic forms on $Y$. 
\par\noindent (b) Let $H<G$ be a symmetric subgroup, i.e. an open 
subgroup of the fixed point set of an involutive automorphism 
of $G$. We refer to $X:=H\bs G$ as a semisimple symmetric space. 
The Cartan-decomposition of $X$ allows us to control growth on $X$ 
and yields natural SF-modules $C_\alpha^\infty(X)$ of smooth 
functions with growth rate at most $\alpha$. As before one obtains 
$C_{\rm mod}^\infty (X)=\lim_{\alpha\to \infty} C_\alpha^\infty(X)$ a natural 
complete $\S(G)$-module of functions with moderate growth. 
\end{ex}

\par If $(\pi_1, E_1)$, $(\pi_2, E_2)$ are two representations, 
then we denote by\linebreak  $\Hom_G(E_1, E_2)$ for the space of continuous $G$-equivariant 
linear maps from $E_1$ to $E_2$. 

\begin{prop}\label{p=ext} Let $V$ be a Harish-Chandra module and $V^\infty$ 
its unique $SF$-globalization. Then for any smooth  $\S(G)$-representation 
$(\pi,E)$ of $G$ the linear map 
$$\Hom_G(V^\infty, E) \to \Hom_{(\gf, K)}(V, E^{K-{\rm fin}}), 
\ \ T^\infty\mapsto T:=T^\infty|_{V}$$
is a linear isomorphism. 
\end{prop}

\begin{proof} It is clear that the map is injective. 
To show that the map is onto let us write $\lambda$, resp. $\Lambda$,  for the representation of
$G$, resp. $\S(G)$, on $V^\infty$. Let $v\in V^\infty$. Then we find $f\in \S(G)$ 
such that $v=\Lambda(f)w$ for some $w\in V$. 
We claim that 
$$T^\infty(v):=\Pi(f)T(w)$$ 
defines a linear operator. 
In order to show that this definition makes sense we have to show that 
$T^\infty(v)=0$ if $\Lambda(f)w=0$. 
Let $\xi \in (E^*)^{K-{\rm fin}}$ and 
$\mu:=\xi\circ T\in \tilde V$. We consider 
two distributions on $G$, namely 

$$\Theta_1(\phi):= \xi (\Pi(\phi)T(w))\quad \hbox{and}\quad 
\Theta_2(\phi):= \mu(\Lambda(\phi)w) \qquad (\phi\in C_c^\infty(G))\, .$$
We claim that $\Theta_1=\Theta_2$. In fact,  both distributions are 
$\cZ(\gf)-$ and $K\times K$-finite. Hence they are represented 
by analytic functions on $G$ and thus uniquely 
determined by their derivatives on $K$.  The claim follows. 

\par It remains to show that $T$ is continuous. We recall the construction
of the minimal SF-globalization of $V$, namely $V^\infty =\S(G)^k/ 
\S(G)_{\bf v}$. 
As the action of $\S(G)$ on $E$ is separately continuous, the
continuity of $T^\infty$ follows. 
\end{proof}

\subsection{Automatic continuity}

For a Harish-Chandra module $V$ we denoted by $V^*$ 
its algebraic dual. Note that $V^*$ 
is naturally a module for $\gf$. 

\par If $\hf<\gf$ is a subalgebra, then we write 
$(V^*)^\hf$, resp.  $(V^*)^{\hf-{\rm fin}}$, for the 
space of $\hf$-fixed, resp. $\hf$-finite, algebraic linear 
functionals on $V$. 

\par We call a subalgebra $\hf<\gf$ a {\it (strong) automatic continuity}
subalgebra ((S)AC-subalgebra for short) if for all Harish-Chandra 
modules $V$ one has 
$$(V^*)^\hf\subset (V^\infty)^* \quad \hbox{resp.} \quad 
(V^*)^{\hf-{\rm fin}}\subset  (V^\infty)^*\, .$$

\begin{problem} (a) Is it true that $\hf$ is AC if and only if  
$\la \exp\hf \ra <G$ has an open orbit on $G/P_{\rm min}$ ?
\par\noindent (b) Is it true that 
$\hf$ is SAC if $[\hf,\hf]$ is AC ?
\end{problem}

The following examples of (S)AC-subalgebras are known: 

\begin{itemize}

\item $\nf$, the Lie algebra of an Iwasawa $N$-subgroup, is AC and 
$\af +\nf$, the Lie algebra of an Iwasawa $AN$-subgroup, is 
SAC.  (Casselman). 
\item Symmetric subalgebras, i.e. fixed point sets of involutive 
automorphisms of $\gf$, are AC (Brylinski, Delorme, van den Ban; cf.  
\cite{BrD}, \cite{BD}). 
\end{itemize}

Here we only wish to discuss Casselman's result. We recall
the definition of the Casselman-Jacquet module 
$j(V)=\bigcup_{k\in \N_0} (V/\nf ^k V)^*$ and note that 
$j(V)= (V^*)^{\af +\nf-{\rm fin}}$.

\begin{thm}{\rm (Casselman)}
Let $\nf$ be the Lie algebra of an Iwasawa $N$-subgroup
of $G$ and $\af+\nf$ the Lie algebra of an Iwasawa $AN$-subgroup. 
Then $\nf$ is an AC  and $\af+\nf$ is SAC. In particular, 
for all Harish-Chandra modules $V$ one has $j(V)\subset (V^\infty)^*$. 
\end{thm}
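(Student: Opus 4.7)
The plan is to realize any $\nf$-invariant functional $\lambda$ on $V$ as evaluation at the base point of a matrix coefficient taking values in an explicit $\S(G)$-module, and then to apply the lifting Proposition~\ref{p=ext}. The crucial analytic input is a moderate growth bound on such matrix coefficients.

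First I would establish the following growth estimate, which is the heart of Casselman's argument: for every Harish-Chandra module $V$, every $\lambda\in(V^*)^\nf$, and every $v\in V$, there are constants $C,N>0$ with
$$|\lambda(\pi(g)v)|\leq C\cdot\|g\|^N \qquad (g\in G).$$
This is the one genuinely analytic step. By Langlands' classification together with Propositions~\ref{p=def} and \ref{p=ind}, I would reduce to the case where $V$ sits inside a principal series $V_{\sigma,\mu}$, and on such principal series the $\nf$-invariant forms on the underlying Harish-Chandra module can be described explicitly as Jacquet integrals on the opposite unipotent $\oline N$. For $\mu$ in an appropriate open set these integrals converge absolutely, yielding the bound directly; for arbitrary $\mu$ the holomorphic deformation framework of Section~7 together with the locally uniform matrix coefficient bounds of Corollary~\ref{Cor1} furnishes the polynomial control by analytic continuation.

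Granted the growth estimate, I would form the $(\gf,K)$-equivariant matrix coefficient map
$$\Phi_\lambda: V\to E, \qquad v\mapsto \bigl(g\mapsto \lambda(\pi(g)v)\bigr),$$
where $E:=C_{\mathrm{mod}}^\infty(N\bs G)$ is the complete inductive limit of the SF-modules of smooth left-$N$-invariant functions on $G$ of growth rate at most $\alpha$, endowed with the right-regular action of $G$. The growth estimate guarantees $\Phi_\lambda$ lands in $E$; the image lies in $E_K$ because $v\in V$ is $K$-finite; and a direct computation gives $\Phi_\lambda(Xv)=R_X\Phi_\lambda(v)$ for $X\in\gf$, so $\Phi_\lambda\in\Hom_{(\gf,K)}(V,E_K)$. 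By Proposition~\ref{p=ext}, $\Phi_\lambda$ extends uniquely to a continuous $G$-map $\Phi_\lambda^\infty:V^\infty\to E$. Composing with the continuous evaluation $\mathrm{ev}_{Ne}:E\to\C$ at the base point yields $\tilde\lambda:=\mathrm{ev}_{Ne}\circ\Phi_\lambda^\infty\in(V^\infty)^*$, a continuous extension of $\lambda$. This proves that $\nf$ is AC.

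The SAC statement for $\af+\nf$ is obtained by the same recipe. Given $\lambda\in(V^*)^{\af+\nf-\mathrm{fin}}$, the subspace $L\subset V^*$ generated by $\U(\af+\nf)\lambda$ is finite-dimensional; after splitting into generalized $\af$-weight spaces we may assume $L$ carries a single generalized $\af$-weight $\mu$, whence $\nf^kL=0$ for some $k\in\N$, so $\lambda\in (V/\nf^kV)^*\subset j(V)$. The same growth argument applies to every element of $L$ (the $\af$-translates producing only polynomial corrections in the $A$-variable), and the matrix coefficient map $\Phi_L:V\to C_{\mathrm{mod}}^\infty(N\bs G)\otimes L^*$ built from the whole finite-dimensional packet $L$ is a $(\gf,K)$-morphism into an $\S(G)$-module. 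Proposition~\ref{p=ext} extends it continuously, and evaluating at $Ne$ then pairing with $\lambda\in L$ produces the desired continuous extension.

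The main obstacle is the moderate growth estimate: the purely categorical machinery of Proposition~\ref{p=ext} does not by itself forbid exponentially growing matrix coefficients against $\nf$-invariant forms. The bound genuinely uses the $\cZ(\gf)$-finiteness of $V$ together with the regular-singularities/Jacquet integral description on principal series, tied together by the holomorphic continuation apparatus developed in Sections~6--7; everything after that step is a formal application of the lifting theorem.
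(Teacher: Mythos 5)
Your overall strategy is workable in outline, but it takes a genuinely different and more laborious route than the paper, and the one step you flag as the analytic heart is exactly where there is a gap.

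The paper's proof is essentially formal. Given $\lambda\in(V/\nf^kV)^*$ it never estimates the matrix coefficient $g\mapsto\lambda(\pi(g)v)$ directly. Instead, because $V/\nf^kV$ is finite-dimensional, Frobenius reciprocity (\cite{W}, 4.2.2) applied to the identity $V/\nf^kV\to U_\sigma$ produces a $(\gf,K)$-morphism $T:V\to I_\sigma=\Ind_{P_{\rm min}}^G U_\sigma$. The target $I_\sigma^\infty=C^\infty(G\times_{P_{\rm min}}U_\sigma)$ is an SF-representation on which $\S(G)$ already acts, so Proposition~\ref{p=ext} lifts $T$ to a continuous $T^\infty:V^\infty\to I_\sigma^\infty$, and $\lambda^\infty:=\lambda\circ\mathrm{ev}\circ T^\infty$ is the continuous extension. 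The moderate growth of the matrix coefficient drops out for free, because functions in $I_\sigma^\infty$ have moderate growth. Nothing about Jacquet integrals, Langlands classification, or analytic continuation is used at this stage; those tools were already spent in proving that every Harish-Chandra module is good (which underlies Proposition~\ref{p=ext}).

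Your version chooses $E=C_{\rm mod}^\infty(N\bs G)$ as the target, and therefore must first establish the a~priori bound $|\lambda(\pi(g)v)|\le C\|g\|^N$ to know the matrix coefficient lands in $E$. That bound is true, but your sketch of its proof has a genuine hole. You want to embed $V$ into a principal series $V_{\sigma,\mu}$ and describe $\nf$-invariant forms via Jacquet integrals on $\oline N$. However, taking $\nf$-invariants is only left-exact: from $0\to V^\perp\to V_{\sigma,\mu}^*\to V^*\to 0$ you get $(V_{\sigma,\mu}^*)^\nf\to(V^*)^\nf$ with cokernel landing in $H^1(\nf,V^\perp)$, so there may be $\nf$-invariant functionals on $V$ that do not restrict from Jacquet integrals on the ambient principal series. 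Without controlling these, the estimate does not follow from the principal series computation, and the ``analytic continuation'' appeal does not obviously repair this. There is also a small slip in your SAC reduction: you cannot in general assume the $\U(\af+\nf)$-span $L$ of $\lambda$ carries a single generalized $\af$-weight; the clean way to see $\nf^kL=0$ is Lie's theorem applied to the solvable algebra $\af+\nf$ on the finite-dimensional module $L$, since $[\af+\nf,\af+\nf]\subset\nf$ then acts by strictly upper-triangular matrices.

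In short, the paper's choice of $E=I_\sigma^\infty$ makes the growth estimate a consequence of the lifting proposition rather than a prerequisite for it. By choosing $C_{\rm mod}^\infty(N\bs G)$ you have reintroduced a nontrivial analytic input and your proposed derivation of it is incomplete. If you wanted to salvage your route, the cleanest fix is precisely the Frobenius reciprocity step: first map $V$ into $I_\sigma^\infty$, deduce the moderate growth bound from that, and only then (if desired) pass to $C_{\rm mod}^\infty(N\bs G)$ --- at which point you have just rederived the paper's argument with an extra step.
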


\begin{proof} Let us emphasize that the proof needs only results up to 
Section \ref{mpri}, i.e. that minimal principal series are good. 
\par We first prove that $\af+\nf$ is SAC. Let 
$V$ be a Harish-Chandra module and $0\neq \lambda \in j(V)$. 
By definition there exists a $k\in \N$ such that 
$\lambda \in (V/\nf^kV)^*$.  
Write $(\sigma, U_\sigma)$ for the finite dimensional representation 
of $P_{\rm min}$ on $V/ \nf^k V$ and denote by $I_\sigma$ the corresponding 
induced Harish-Chandra module. Note that $I_\sigma^\infty=C^\infty
(U_\sigma \times_{P_{\rm min}}G)$.
\par Applying Frobenius reciprocity to the identity morphism 
$V/\nf^k V \to U$ yields a non-trivial $(\gf, K)$-morphism
$T: V\to I_\sigma$ (cf. \cite{W}, 4.2.2). 
Now $T$ lifts to a continuous $G$-map $T^\infty: V^\infty\to I_\sigma^\infty$
by Proposition \ref{p=ext}.  
If ${\rm ev}: I_\sigma^\infty\to U_\sigma$ denotes the evaluation map 
at the identity, then $\lambda^\infty:=  \lambda\circ {\rm ev}\circ 
T^\infty$ provides a continuous extension of $\lambda$ to $V^\infty$. 

\par The assertion that $\nf $ is AC follows from the fact that 
the space of $(V^*)^{\nf}$ is finite dimensional (Casselman), 
and in particular $\af$-finite. 
\end{proof}

\subsection{Lifting of holomorphic families of $(\gf, K)$-maps}

We wish to give a version of lifting (cf. Proposition \ref{p=ext})
which depends holomorphically on parameters. 

To begin with we need a generlaization of 
Theorem \ref{th=1} and Theorem \ref{th=2} for 
principal series representations which are induced from an arbitrary parabolic 
subgroup. 

\par Let $P=N_P A_P M_P$ be a parabolic above $P_{\rm min}$. 
We fix an SAF-representation $(\sigma, V_\sigma^\infty)$ 
of $M_P$ and write $V_\sigma$ for the corresponding Harish-Chandra
module. 

\par As $K$-modules we identify all $V_{\sigma,\lambda}$ with 
$V:=\C[V_\sigma \times_{K_P} K ]$. Note that 
$V_\sigma$ is a $K_P$-quotient of some $\C[K_P]^m$, $m\in \N$. 
Double induction gives an identification of 
$V$ as a $K$-quotient  of $\C[K]^m$. 
Note that $C^\infty (K)^m$ induces the unique SF-topology on $V^\infty$. 
For each $\tau$ we write  $\chi_\tau$  for its character 
and $\delta_{\sigma, \tau}$ for the orthogonal projection 
of $\underbrace{(\chi_\tau, \ldots, \chi_\tau)}_{m-times}$ 
to $V[\tau]$, the $\tau$-isotypical part of $V$.

\begin{thm}\label{th=1a} Let $P=N_PA_PM_P$ be a parabolic subgroup and 
$V_\sigma$ an irreducible unitarizable Harish-Chandra module for 
$M_P$. Let $Q\subset (\af_P)_\C^*$ 
be a compact subset and $N>0$. 
Then there exists $\xi\in \C[V_\sigma \times_{K_P} K]$ and 
constants $c_1,c_2>0$ 
such that for all $\tau\in \hat K$, $\lambda\in Q$,   there exists 
$a_\tau\in A$, independent 
of $\lambda$,  with $\|a_\tau\|\leq (1+|\tau|)^{c_1}$ 
and numbers $b_\sigma(\lambda,\tau)\in \C$  
such that 
$$\|[\pi_{\sigma, \lambda}(a_\tau) \xi]_\tau  - 
b_\sigma(\lambda,\tau) \delta_{\sigma, 
\tau}\| \leq {1\over (|\tau|+1)^{N+c_2}}$$ 
and 
$$|b_\sigma(\lambda,\tau)|\geq {1\over (|\tau|+ 1)^{c_2}}\, .$$
Here $\|\cdot\|$ refers to the continuous norm on $V$ induced 
by the realization of $V$ as a quotient of $C[K]^m\subset L^2(K)^m$. 
\end{thm}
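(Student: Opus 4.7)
The plan is to directly imitate the proof of Theorem~\ref{th=1} in the $\sigma$-induced setting. I would first realize $V_{\sigma,\lambda}^\infty$ in the non-compact model as a space of smooth $V_\sigma^\infty$-valued functions on $\oline N_P$. Since $A_P$ commutes with $M_P$, the $A_P$-action takes exactly the same shape as in the spherical case,
$$[\pi_{\sigma,\lambda}(a) f](\oline n) = a^{\rho_P + \lambda}\, f(a^{-1}\oline n a),$$
with no extra $\sigma$-factor. After fixing a dominant $Y\in \af_P$ and setting $a_t = \exp((\log t)Y)$, the one-parameter group $a_t$ contracts $\oline N_P$ toward the identity as $t\to \infty$, exactly as before.

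The Dirac-type vector $\xi$ would be built in two pieces. For the scalar ingredient I would reuse the construction from Theorem~\ref{th=1}: from a $K$-spherical finite-dimensional representation of $G$ with sufficiently regular $A_P$-lowest weight $-\mu$, one obtains a function $\phi(\oline n) = \tilde a_P(\oline n)^{\rho_P+\lambda-\mu}$ on $\oline N_P$ with arbitrarily rapid polynomial decay and nonzero integral $I_\phi$ (Carleman's theorem exactly as before). For the vector-valued ingredient I would use the $K_P$-equivariant surjection $\C[K_P]^m \twoheadrightarrow V_\sigma$: realize the construction first in $\C[K]^m = \Ind_{K_P}^K \C[K_P]^m$ as an $m$-tuple of scalar Dirac-approximants built from $\phi$, then push it down through the quotient map $\C[K]^m \twoheadrightarrow V$. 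By construction, the limit of the dilated $\xi$ lies over the identity coset in $V_\sigma^\infty \times_{K_P} K$ and projects onto each $\tau$-isotypic as a nonzero multiple of $\delta_{\sigma,\tau}$.

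The core computation then runs exactly as in the proof of Theorem~\ref{th=1}. Expanding $[\pi_{\sigma,\lambda}(a_t)\xi]_\tau$ against a basis of $V[\tau]^*$, transferred to the non-compact model as vector-valued matrix coefficients $D_{\sigma,\tau}^{i,j}$, produces coefficients $b_{i,j}^\sigma(t) = \langle \pi_{\sigma,\lambda}(a_t)\xi, D_{\sigma,\tau}^{i,j}\rangle$ which I split at the threshold $\|X\| = t_0 = t^{-1/2}$. Using both the rapid decay of $\phi$ on $\{\|X\|\geq t_0\}$ and the mean-value control of $D_{\sigma,\tau}^{i,j}$ near the origin (the relevant polynomial bounds in $|\tau|$ follow from finite-dimensional representation theory applied to each $U_\tau$), I would obtain the same asymptotics: the diagonal coefficients converge to $a_t^{-\rho_P + \lambda}\cdot I_\phi$ while off-diagonal terms are $O((1+|\tau|)^{-N})$. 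Setting $t(\tau) = (1+|\tau|)^{c}$ for $c$ large and collecting the error terms then yields both the asserted bound on $\|[\pi_{\sigma,\lambda}(a_\tau)\xi]_\tau - b_\sigma(\lambda,\tau)\delta_{\sigma,\tau}\|$ and the lower bound on $|b_\sigma(\lambda,\tau)|$. Uniformity in $\lambda\in Q$ is automatic because $Q$ is compact and the integrands depend holomorphically on $\lambda$.

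The main obstacle I anticipate is verifying that a single $\xi$ produces the correct Dirac behavior on every $\tau$-isotypic simultaneously. In the spherical case this was automatic because $\delta_{Me}$ has nonzero projection onto every $\delta_\tau$ with $\tau \in \hat K_M$; here I need the analogous statement that the ``$V_\sigma$-valued Dirac'' at the identity coset has nonzero projection onto every $\delta_{\sigma,\tau}$ with $\tau\in \hat K$ occurring in $V$. The explicit form of $\delta_{\sigma,\tau}$ as the orthogonal projection of $(\chi_\tau,\ldots,\chi_\tau)\in\C[K]^m$ onto $V[\tau]$, combined with the lift-and-push strategy above, should turn this into a routine pairing check: the $m$-tuple of scalar Dirac vectors built from $\phi$ pairs nontrivially with $(\chi_\tau,\ldots,\chi_\tau)$ for every relevant $\tau$, and this nontriviality is preserved under the projection to $V$.
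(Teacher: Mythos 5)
Your plan diverges from the paper's proof, and the divergence is where the gap opens up. The paper's argument is a short reduction: first extend Theorem~\ref{th=1} by a ``simple modification'' to the case $P=P_{\rm min}$ with $\sigma$ an arbitrary finite-dimensional representation of $M$ (so that the compact model is $L^2(U_\gamma\times_M K)$ rather than $L^2(M\bs K)$), and then, for general $P$, realize $V_\sigma$ as a quotient of a minimal principal series for $M_P$, apply double induction to write $V_{\sigma,\lambda}$ consistently as a quotient of a minimal principal series for $G$, and push the Dirac approximation through the quotient map. You instead attempt a direct computation in a non-compact model on $\oline N_P$, contracting only with $a_t\in A_P$, and propose to produce $\xi$ by lifting $m$ copies of a scalar Dirac approximant to $\C[K]^m$ and pushing down to $V$.

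The gap sits in the step you flag and then dismiss as routine. A scalar cutoff function $\phi$ built from a $K$-spherical finite-dimensional representation of $G$ lives, in the compact picture for the $P$-induced model, inside $\C[K_P\bs K]$ (it is $K_P$-invariant on the left), hence its $K$-decomposition involves only those $\tau\in\hat K$ with $U_\tau^{K_P}\neq 0$. This is strictly smaller than $\hat K_M$, let alone the full set of $K$-types occurring in $V=\C[V_\sigma\times_{K_P} K]$, which by Frobenius reciprocity is $\{\tau : \Hom_{K_P}(\tau|_{K_P}, V_\sigma)\neq 0\}$ and is typically much larger when $\sigma$ is infinite-dimensional. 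Consequently the $m$-tuple $(\phi,\ldots,\phi)$ pairs trivially with $(\chi_\tau,\ldots,\chi_\tau)$ for all $\tau$ outside this sub-family, so its image in $V$ cannot Dirac-approximate $\delta_{\sigma,\tau}$ for those $\tau$, and the lower bound $|b_\sigma(\lambda,\tau)|\geq (1+|\tau|)^{-c_2}$ fails. To repair this you would need a vector in $\C[K]^m$ (not in $\C[K_P\bs K]^m$) whose $K$-decomposition hits every $\tau$ occurring in $V$ with controlled size, which is exactly what the non-spherical extension of Theorem~\ref{th=1} supplies and what the paper's double induction reduces to. So the ``routine pairing check'' is in fact the crux of the theorem; your scalar-ingredient-plus-lift does not deliver it without first carrying out the finite-dimensional modification at $P_{\rm min}$ and then transporting it via the minimal principal series quotient, i.e.\ without effectively recreating the paper's reduction.
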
 

\begin{proof} Let us first discuss the case where $P=P_{\rm min}$ and 
$\sigma$ is finite dimensional. With the reduction steps explained 
in the beginning of the next section this becomes 
a simple modification 
of Theorem \ref{th=1}. 
\par As for the general case we identify $V_\sigma$ as a quotient 
of a minimal principal series for $M_P$.
Using double induction 
we can write the $V_{\sigma, \lambda}$'s
consistently as quotients of such minimal principal series. The 
assertion follows. 
\end{proof}

As a consequence we get an extension of Theorem \ref{th=2}. 

\begin{thm}  \label{th=2a} Let $Q\subset (\af_P)_\C^*$ be a compact 
subset. Then there exist a continuous map 
$$Q\times C^\infty(V_\sigma^\infty \times_{K_P}K) 
\to \S(G), \ \ (\lambda, v)\mapsto 
f(\lambda, v)$$
which is holomorphic in the first variable, linear in the second 
and such that 
$$\Pi_{\sigma, \lambda}(f(\lambda, v))\xi = v\, .$$ 
\end{thm}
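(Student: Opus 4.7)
The plan is to transport the argument of Theorem \ref{th=2} verbatim, replacing the spherical principal series data by the data coming from the induced family $(\pi_{\sigma,\lambda})_{\lambda \in Q}$, and to use Theorem \ref{th=1a} as the replacement for Theorem \ref{th=1}. So I would first decompose an arbitrary $v \in C^\infty(V_\sigma^\infty \times_{K_P} K)$ into $K$-isotypical components $v = \sum_{\tau \in \hat K} c_\tau v_\tau$ with $\|v_\tau\|=1$ and $(c_\tau)_\tau$ rapidly decreasing, the decomposition depending continuously on $v$ in the SF-topology. Since $\S(G)$ is stable under left convolution with $C^{-\infty}(K)$, the construction of $f(\lambda,v)$ reduces to the case $v_\tau = \frac{1}{\sqrt{d(\tau)}}\delta_{\sigma,\tau}$, where $\delta_{\sigma,\tau}$ is the normalized projector to $V[\tau]$ introduced just before Theorem \ref{th=1a}.

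Second, I would produce the induced analogue of Lemma \ref{l=123}: given the $K$-finite generator $\xi$ supplied by Theorem \ref{th=1a}, use uniform Dirac-approximation on the compact set $Q$ on the finite dimensional $K$-module $V_\xi$ generated by $\xi$, and then invert by Cayley--Hamilton with holomorphic dependence on $\lambda$, to obtain an $\Ad(K)$-invariant test function $h_\lambda \in \F(U)$, holomorphic in $\lambda \in Q$, with $\Pi_{\sigma,\lambda}(h_\lambda)\xi = \xi$. Setting
\[
 h_{\sigma,\lambda,\tau} := \delta_\tau * \delta_{a_\tau} * h_\lambda,
\]
with $a_\tau \in A$ and $\|a_\tau\|\leq (1+|\tau|)^{c_1}$ as in Theorem \ref{th=1a}, the same Haar-support estimate used in Lemma \ref{rapid} gives that
\[
 H_\lambda := \sum_{\tau \in \hat K} \frac{c_\tau}{\sqrt{d(\tau)}\, b_\sigma(\lambda,\tau)}\, h_{\sigma,\lambda,\tau}
\]
lies in $\S(G)$ and depends holomorphically on $\lambda$: the denominators $b_\sigma(\lambda,\tau)$ are bounded below by $(1+|\tau|)^{-c_2}$ uniformly for $\lambda \in Q$, while the rapid decrease of $(c_\tau)$ absorbs any polynomial loss in $\tau$.

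Third, I apply $H_\lambda$: by Theorem \ref{th=1a},
\[
 \Pi_{\sigma,\lambda}(H_\lambda)\xi = \sum_\tau c_\tau v_\tau + R_\lambda,
\]
where the remainder $R_\lambda$ satisfies $\|(R_\lambda)_\tau\| \leq |c_\tau|(1+|\tau|)^{-N}$ with $N$ as large as we wish by the choice of $\xi$. Exactly as at the end of the proof of Theorem \ref{th=2}, one removes $R_\lambda$ by iterating the construction: write $R_\lambda$ again as a $K$-Fourier series with rapidly decreasing coefficients of arbitrarily high decay rate, rerun the procedure, and sum the resulting geometric series in $\S(G)$ (the series converging because the norm reduction factor at each step can be made $<1$ uniformly in $\lambda \in Q$). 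Equivalently, invert the perturbed identity $\mathrm{id} + \text{(small)}$ on the sequence space of $K$-Fourier coefficients with holomorphic dependence on $\lambda$. The resulting $f(\lambda,v)$ is linear in $v$, holomorphic in $\lambda$, and continuous on $Q \times C^\infty(V_\sigma^\infty \times_{K_P} K)$.

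The main technical obstacle is the last step: in the non-multiplicity-free setting (which is unavoidable for general $\sigma$), the diagonal approximation of Theorem \ref{th=1a} only inverts up to an error term, and one must check that the geometric-series correction converges as an element of $\S(G)$ with the correct holomorphic dependence on $\lambda$. The key point that makes this go through is that both the operator bound on the error and the lower bound $|b_\sigma(\lambda,\tau)|\geq (1+|\tau|)^{-c_2}$ from Theorem \ref{th=1a} are locally uniform in $\lambda$, and that $\|a_\tau\|$ grows only polynomially in $|\tau|$, so every step of the iteration lands in a fixed Schwartz semi-norm ball of $\S(G)$ with geometrically decaying size.
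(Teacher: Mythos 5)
Your overall strategy---transport the proof of Theorem~\ref{th=2} to the induced family, using Theorem~\ref{th=1a} in place of Theorem~\ref{th=1}, together with the induced analogues of Lemma~\ref{l=123} and Lemma~\ref{rapid}---is exactly what the paper intends: Theorem~\ref{th=2a} is stated ``as a consequence'' of Theorem~\ref{th=1a} with no separate proof given. The first two steps of your proposal (the $K$-isotypical decomposition with reduction to $v_\tau=\frac{1}{\sqrt{d(\tau)}}\delta_{\sigma,\tau}$, the Dirac-approximation/Cayley--Hamilton construction of a holomorphic $h_\lambda$ with $\Pi_{\sigma,\lambda}(h_\lambda)\xi=\xi$, and the assembly of $H_\lambda$ with the lower bound on $b_\sigma(\lambda,\tau)$ and the support estimate guaranteeing $H_\lambda\in\S(G)$) are correct and faithful to the intended argument.

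The third step, however, both misdescribes the paper's own argument and contains a genuine gap. The paper does \emph{not} iterate and sum a geometric series at the end of the proof of Theorem~\ref{th=2}; it removes the remainder in a single step by left convolution with an element of $C^{-\infty}(K)$. That is, one exhibits a $\lambda$-dependent distribution $\Theta_\lambda$ on $K$ whose action $\pi_{\sigma,\lambda}(\Theta_\lambda)$ (block diagonal on $K$-types, acting on the $U_\tau$-factor of $V[\tau]$) carries $\frac{c_\tau}{\sqrt{d(\tau)}}\delta_{\sigma,\tau}+R_\tau$ \emph{exactly} to $\frac{c_\tau}{\sqrt{d(\tau)}}\delta_{\sigma,\tau}$ on each isotypical component; since the correcting operators have norms bounded polynomially in $|\tau|$ and depend holomorphically on $\lambda$ (because $R_\tau$ does), $\Theta_\lambda$ is a genuine element of $C^{-\infty}(K)$ varying holomorphically, and $f(\lambda,v):=\Theta_\lambda*H_\lambda$ does the job.

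Your geometric-series substitute does not go through as stated. The error bound obtained from Theorem~\ref{th=1a} is
\[
\|R_\tau\|\ \le\ \frac{|c_\tau|}{\sqrt{d(\tau)}\,(1+|\tau|)^{N}},
\]
whose reduction factor $(1+|\tau|)^{-N}$ equals $1$ for $|\tau|=0$ and is not uniformly $<1$ over the finitely many small $K$-types; so the assertion that ``the norm reduction factor at each step can be made $<1$ uniformly in $\lambda\in Q$'' is not justified. Moreover, each iteration would have to re-apply the reduction of a general $R_\tau\in V[\tau]$ to a multiple of $\delta_{\sigma,\tau}$---a step which is itself left convolution with $C^{-\infty}(K)$---and you do not verify that the resulting coefficients retain the uniform decay with the same exponent. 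The phrase ``equivalently, invert the perturbed identity $\mathrm{id}+(\text{small})$'' is not equivalent to the Neumann-series argument you give unless the perturbation is uniformly small; the correct move is the direct (non-Neumann) inversion by convolution, which sidesteps the convergence issue entirely and is what the paper does.
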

 
As a Corollary to Theorem \ref{th=2a} we obtain 
the holomorphic lifting result: 

\begin{thm} \label{th=3} Let $(\pi, E)$ be a Banach representation of $G$. Within the notation of 
Theorem \ref{th=2a} let $\Omega\subset (\af_P)_\C^*$ be an open set and 
$$T: \Omega\times \C[V_\sigma\times_{K_P} K] \to E^\infty $$
a map  such that: 
\begin{itemize} 
\item For every $v\in \C[V_\sigma\times_{K_P} K]$ the assignment $\Omega\ni \lambda \mapsto T(\lambda, v)\in E^\infty$
is holomorphic. 
\item For every $\lambda$ the assignment 
$$V_{\sigma, \lambda}= \C[V_\sigma\times_{K_P} K] \to E^\infty , \ \ v\mapsto T(\lambda, v)$$
is a $(\gf, K)$-map. 
\end{itemize}
Then $T$ admits a holomorphic extension to  a map 
$$T^\infty: \Omega\times C^\infty(V_\sigma^\infty \times_{K_P} K) \to E^\infty\, .$$ 
\end{thm}

\begin{proof} It is no loss of generality to assume that $\Omega$ is relatively compact. 
Within the notation of Theorem \ref{th=2a} we define 
$$T^\infty(\lambda, v):= \Pi(f(\lambda, v)) T(\lambda, \xi)\, .$$
\end{proof}

\begin{rem} \label{rem=Eis} {\rm (Application to Eisenstein series)}
Let $\Gamma<G$ be a lattice and $Y:=\Gamma\bs G$. Let  
$$T: \Omega\times \C[V_\sigma\times_{K_P} K] \to C_{\rm mod}^\infty(Y)$$
be a map  which satisfies the conditions in Theorem \ref{th=3}. Then, basic automorphic theory 
implies  for all relatively compact 
$\Omega\subset (\af_P)_\C^*$ the existence of a growth index $\alpha$ such that $\im T \subset L_\alpha^2(Y)$. In particular Theorem \ref{th=3} is applicable. 
\par A typical application is as follows. Let us consider
Eisenstein series attached to the lattice $\Gamma<G$.
We assume that $P=M_P A_P N_P $ is cuspidal and set $L:= M_P \cap K$.
Fix a finite dimensional unitary representation $(\sigma, U)$ of $L$
and a $\Gamma\cap M_P$-invariant $L^2$-section $\psi$ of the
vector bundle $ (\Gamma\cap M_P)\bs M_P \times_L U \to (\Gamma\cap M_P)  \bs M_P$.
Suppose that all contractions $\la \psi, u\ra \in L^2(\Gamma\cap M_P \bs M_P)$, $u\in U$, 
generate a Harish-Chandra module for $(\m_P, L)$.  Let $f$ be a smooth section of the 
$K$-equivariant vector bundle $U \times_L K \to L\bs K$. 
Then one defines Eisenstein series

$$E(\lambda,\psi, f) (\Gamma g) := \sum_{\gamma\in \Gamma\cap P \bs \Gamma}
\tilde a(\gamma g)^\lambda \la \psi (\tilde m (\gamma g)), f(\tilde k(g))\ra$$
where $g = \tilde n(g) \tilde a(g) \tilde m(g)\tilde k(g) \in N_P A_P M_P K $
and $\lambda \in (\af_P)_\C^*$. 
Suppose you have shown that for 
all $K$-finite sections $f$ that $E(\lambda, \psi, f)$
can be meromorphically
continued to some region in the parameter space
$\Omega\subset (\af_P)_\C^*$. Then the same holds true for all
smooth sections $f$. 
\end{rem}

\section{Appendix A: Spherical principal series and the proof of 
Theorem \ref{thm=main}(i)} 

We first discuss  how the proof of Theorem \ref{thm=main}(i)  reduces to the case of 
spherical principal series. We use the notation from 
Section \ref{mpri}. 

\par First it is clear that it is sufficient 
to establish the result for {\it one} fixed set of generators $\xi_1, \ldots, \xi_k$ 
of $I(W)$.  Next let $\{0\}=W_0\subset W_1\subset\ldots \subset W_n=W$
be a Jordan-H\"older series of $W$. It induces an inclusion chain of $(\gf,
K)$-modules  
$$\{0\}=I(W_0)\subset I(W_1)\subset \ldots \subset I(W_n)=I(W)$$
with $I(W_{j+1})/I(W_j)\simeq I(W_{j+1}/ W_j)$. It follows that we can and
will assume
that $W$ is irreducible. In particular, the $P_{\rm min}$-representation $W$ 
factors to $P_{\rm min}/N\simeq M\times A$. Let us write $\sigma\times \chi$
for this $M\times A$-representation on $W$. Next there exists a
finite dimensional representation $F$ of $G$ with $N$-invariants $F^N$ such that 
$W\hookrightarrow  F^N\otimes\C_\chi\subset F\otimes \C_\chi$. 
Further $I(F\otimes\C_\chi) \simeq I(\C_\chi)\otimes F$ and thus we obtain an
embedding $I(W)\hookrightarrow I(\C_\chi)\otimes W$. 
In view of our discussion of tensoring with 
finite dimensional representations (see Subsection \ref{tensor})
matters reduce to $W=\C_\chi$.

\subsection{Spherical principal series representations}

In this section we introduce a Dirac-type sequence 
for spherical principal series representations (see Subsection \ref{dirac}
with Theorem \ref{th=1}). This allows us to establish lower
bounds for matrix-coefficients which are uniform in the $K$-types (cf. Corollary \ref{Cor1}). 
These lower bounds are essentially sharp, locally uniform in the 
representation parameter, and stronger than the more abstract
estimates in Theorem \ref{thm=lobo}. 

\par The lower bounds established give us a constructive method for finding 
Schwartz-functions representing a given smooth vector and as a side product 
a proof of Theorem \ref{thm=main}(i) for spherical principal 
series (see Subsection \ref{cs}).  

\medskip 
\par According to the Iwasawa decomposition 
$G=NAK$ we decompose elements $g\in G$ as 

$$g = \tilde n(g) \tilde a(g) \tilde k(g)$$
with $\tilde n(g)\in N$, $\tilde a(g)\in A$ and $\tilde k(g)\in K$. 
We recall $M=Z_K(A)$ and the minimal parabolic subgroup 
$P_{\rm min}=NAM$ of $G$. 

\par The Lie algebras of $A,N$ and $K$ shall be denoted 
by $\af$, $\nf$ and $\kf$.  Complexification of Lie-algebras 
are indicated with a $\C$-subscript, i.e. $\gf_\C$ is the complexification 
of $\gf$ etc. 
As usually we define $\rho\in \af^*$ by 
$\rho(Y):={1\over 2}\tr (\ad_\nf Y)$ for $Y\in \af$. 
\par The smooth spherical principal series with parameter 
$\lambda\in\af_\C^*$ is defined by 

\begin{align*} \H_\lambda^\infty:=\{ f\in C^\infty(G)\mid & (\forall nam\in 
P_{\rm min}, 
\forall g\in G)\\
& f(nam g) = a^{\rho+\lambda} f(g)\}\end{align*}
We note that $R$ defines a smooth representation of $G$ 
on $\H_\lambda^\infty$ which we denote henceforth by $\pi_\lambda$. 
The restriction map to $K$ defines a $K$-isomorphism: 

$$\Res_K: \H_\lambda^\infty \to  C^\infty(K\bs M), \ \ f\mapsto f|_K\, .$$
The resulting action of $G$ on $C^\infty(M\bs K)$ is given by 

$$[\pi_\lambda(g)f] (Mk)= f (M \tilde k(kg)) \tilde a(kg)^{\lambda+\rho}\, .$$ 
This action lifts to a continuous action on the Hilbert completion 
$\H_\lambda=L^2(M\bs K)$ of $C^\infty(M\bs K)$. We note that this 
representation is unitary provided that $\lambda\in i\af^*$. 

\par We denote by $V_\lambda$ the $K$-finite vectors 
of $\pi_\lambda$ and note that $V_\lambda=\C[M\bs K]$ as $K$-module. 
For later reference we record that the dual representation of 
$(\pi_\lambda, \H_\lambda)$ is isomorphic to 
$(\pi_{-\lambda}, \H_{-\lambda})$ via 
the $G$-equivariant pairing

\begin{equation}\label{pair1} (\cdot, \cdot ): \ \H_{-\lambda}\times \H_\lambda\to \C, \ \ (\xi, v):=
\int_{M\bs K} \xi (Mk) v (Mk)\  d(Mk)\, .\end{equation}
Here $G$-equivariance means that 
$$(\pi_{-\lambda}(g)\xi, v) = (\xi, \pi_\lambda(g^{-1}) v)$$
for all $g\in G$.  
 
\subsubsection{$K$-expansion of smooth vectors}

We recall $\hat K$, the set of equivalence classes of 
irreducible unitary representations of $K$. If $[\tau]\in \hat K$ we let $(\tau, U_\tau)$ be a 
representative. Further we write $\hat K_M$ for the subset of $M$-spherical 
equivalence classes, i.e. 
$$[\tau] \in \hat K_M \iff U_\tau^M:=\{ u\in U_\tau\mid  \tau(m)u=u\  \forall m\in M\}\neq \{0\}\,  .$$

\par Given a finite dimensional representation $(\tau, U_\tau)$ of $K$ we denote by 
$(\tau^*, U_\tau^*)$ its  
dual representation. With each $[\tau]\in \hat K_M $ comes the realization 
mapping 

$$r_\tau: U_\tau\otimes (U_\tau^*)^M\to L^2(M\bs K), 
\ \ u\otimes \eta\mapsto (Mk \mapsto \eta(\tau(k) u))\, .$$
Let us fix a $K$-invariant  inner product on $U_\tau$. This 
inner product induces a $K$-invariant inner product on $U_\tau^*$. 
We obtain an inner product on $U_\tau\otimes (U_\tau^*)^M$ 
which is independent of the chosen inner product on 
$U_\tau$. If we denote by $d(\tau)$ the dimension of $U_\tau$, then 
Schur-orthogonality implies that 
$$ {1\over d(\tau)} \|u\otimes \eta\|^2 = \|r_\tau(u\otimes \eta)
\|_{L^2(M\bs K)}^2\, .$$ 
Taking all realization maps together we arrive at a $K$-module 
isomorphism 
$$\C[M\bs K]=\sum_{\tau\in \hat K_M} 
U_\tau\otimes (U_\tau^*)^M\, .$$

\par Let us fix a maximal torus $\tf\subset \kf$ and a positive 
chamber $\Cc\subset i \tf^*$. We often identify $\tau$ with its 
highest weight in $\Cc$ and write $|\tau|$ for the
norm (with respect to the positive definite form $B$) of the highest weight. 
As $d(\tau)$ is polynomial in  $\tau$
we arrive at the following characterization of the smooth functions:

\begin{align*} C^\infty(M\bs K)=\Big\{ \sum_{\tau\in \hat K_M} c_\tau u_\tau\mid
& c_\tau\in \C , u_\tau \in  U_\tau\otimes (U_\tau^*)^M, \|u_\tau\|=1 \\
& (\forall N\in \N) \sum_{\tau\in \hat K_M} 
|c_\tau| (1+|\tau|)^N < \infty\Big\}\, .\end{align*}

Let us denote by $\delta_{Me}$ the point-evaluation of 
$C^\infty(M\bs K)$ at the base point $Me$. We decompose 
$\delta_{Me}$ into $K$-types: 

$$\delta_{Me}=\sum_{\tau\in \hat K_M} \delta_\tau$$
where 
$$\delta_\tau= d(\tau)\sum_{i=1}^{l(\tau)} u_i \otimes u_i^*$$
with $u_1, \ldots, u_{l(\tau)}$ any basis of 
$U_\tau^M$ and $u_1^*, \ldots,   u_{l(\tau)}^*$ its dual 
basis.
For $1\leq i,j \leq l(\tau)$ we set
$$\delta_\tau^{i,j}:= u_i \otimes u_j^*$$
and record that $\delta_\tau=d(\tau)\sum_{i=1}^{l(\tau)}\delta_\tau^{i,i}$. 
Note the following properties of $\delta_\tau$ and $\delta_\tau^{i,j}$: 

\begin{itemize}
\item $\|\delta_\tau^{i,i}\|_\infty=\delta_\tau^{i,i}(Me)= 1 $.
\item $\delta_\tau*\delta_\tau=\delta_\tau$. 
\item $\delta_\tau*f = f$ for all $f\in 
L^2(M\bs K)_\tau:= \im r_\tau$. 
\end{itemize}

\subsubsection{Non-compact model}

We have seen that the restriction map $\Res_K$ realizes $\H_\lambda^\infty$ 
as a function space on $M\bs K$. 
Another standard realization will be useful for us. Let us denote by 
$\oline N$ the opposite of $N$. 
As $NAM \oline N$ is open and dense in $G$ we obtain 
a faithful restriction mapping: 

$$\Res_{\oline N}: \H_\lambda^\infty\to C^\infty(\oline N), 
\ \ f\mapsto f|_{\oline N}\, .$$
Note that this map is not onto. The transfer of compact to 
non-compact model is given by 

\begin{align*} \Res_{\oline N}  \circ \Res_K^{-1}&: 
C^\infty(M\bs K)\to C^\infty (\oline N), \\
&f\mapsto F; \ F(\oline n):= \tilde a(\oline n )^{\lambda +\rho} f (\tilde k(\oline n ))
\end{align*} 

The transfer of the Hilbert space structure on $\H_\lambda= L^2(M\bs K)$ 
results in the $L^2$-space 
$L^2(\oline N,  \tilde a(\oline n)^{-2\Re \lambda} d\oline n)$ with $d\oline n$ an 
appropriately normalized  
Haar measure on $\oline N$. In the sequel we also write 
$\H_\lambda$ for $L^2(\oline N,  \tilde a(\oline n)^{-2\Re \lambda} d\oline n)$
in the understood context. The full action of $G$ in the non-compact model 
is not of relevance to us, however we will often use the $A$-action 
which is much more transparent in the non-compact picture: 

$$[\pi_\lambda(a) f ](\oline n) =  a^{\lambda +\rho}
f (a^{-1} \oline n a) $$
for all $a\in A$ and $f\in 
L^2(\oline N,  \tilde a(\oline n)^{-2\Re \lambda} d\oline n)$.

\subsubsection{ $K$-finite vectors with fast decay}

\par The fact that $\Res_K$ is an isomorphism follows from the 
geometric fact that $P_{\rm min}\bs G\simeq M\bs K$. Now $\oline N$ embeds into 
$P_{\rm min}\bs G= M\bs K$ as an open dense subset. In fact the complement is 
algebraic and we are going to describe it as the zero set 
of a $K$-finite functions $f$ on $M\bs K$. 
We will show that $f$ can be chosen such that $f$ restricted to $\oline N$ has 
polynomial decay of arbitrary fixed order.

\par Let $(\sigma, W)$ be a finite  dimensional faithful irreducible
representation of $G$. We assume that $W$ is $K$-spherical, i.e. $W$ 
admits a non-zero $K$-fixed vector, say $v_K$. 
It is known that $\sigma$ is $K$-spherical if and only if there is a
real line $L\subset W$ which is fixed under $\oline P_{\rm min}=MA\oline N$. Let 
$L=\R v_0$ and $\mu \in \af^*$ be such that 
$\sigma(a)v_0= a^\mu \cdot v_0$ for all $a\in A$, in other words:
$v_0$ is a lowest weight vector of $\sigma$ and $\mu$ is the 
corresponding lowest weight. 
\par Let now $\la\cdot, \cdot\ra$ be an inner product on $W$ which is 
$\theta$-covariant: if $g=k\exp(X) $ for $k\in K$ and $X\in \pf$
and $\theta(g):=k \exp(-X)$, then covariance means 
$$\la \sigma(g)v, w\ra = \la v , \sigma(\theta(g)^{-1}) w\ra$$
for all $v, w\in W$ and $g\in G$. Such an inner product is unique up to 
scalar by Schur's Lemma. Henceforth we request that $v_0$ is 
normalized and we fix $v_K$ by $\la v_0, v_K\ra = 1$. 
Consider on $G$ the function
$$f_\sigma(g):=\la \sigma(g)v_0, v_0\ra\, .$$ 
The restriction of $f_\sigma$ to $K$ is also denoted by 
$f_\sigma$. 

\par Let now $\oline n\in \oline N$ and write 
$\oline n = \tilde n(\oline n) \tilde a (\oline n) \tilde k(\oline n)$ according 
to the Iwasawa decomposition. Then 
$\tilde k(\oline n)= n^* \tilde a(\oline n)^{-1} \oline n $ for some $n^*\in  N$. 
Consequently 
$$f_\sigma(\tilde k(\oline n))= \tilde a(\oline n )^{-\mu}\, .$$
If $(\oline n_j)_{j}$ is a sequence in $\oline N$ such that 
$\tilde k(\oline n_j)$ converges to a point in $M\bs K - \tilde k(\oline N)
=:M\bs K -  \oline N$, then $\tilde a(\oline n_j)^{-\mu}\to 0$. Hence 
$$M\bs K -  \oline N\subset \{ Mk\in M\bs K\mid f_\sigma(k)=0\}\, .$$
As $f_\sigma$ is non-negative one obtains for all regular 
$\sigma$ that equality holds: 
$$M\bs K -  \oline N = \{ Mk\in M\bs K\mid f_\sigma(k)=0\}$$
(this reasoning is not new and goes back to Harish-Chandra). Let us 
fix such a $\sigma$ now. 
\par We claim that the mapping $\oline n \to f_\sigma(\oline n)$
is the inverse of a polynomial mapping, i.o.w. the map
$$\oline N\to \R, \ \  \oline n\mapsto \tilde a(\oline n)^\mu$$
is a polynomial map. But this follows from 
$$ \tilde a(\oline n )^\mu = \la \sigma(\oline n)v_K, v_0\ra$$
by means of our normalizations. 

\par In order to make estimates later on we introduce coordinates on 
$\oline N$. For that we first write $\oline \nf$ as 
semi-direct product of $\af$-root vectors: 

$$\oline \nf= \R X_1 \ltimes \left(\R X_2 \ltimes \left(\ldots
\ltimes \R X_n \right)\ldots\right)\, .$$ 
Accordingly we write elements of $\oline \nf$ as 
$X :=\sum_{j=1}^n x_j X_j$ with $x_i\in \R$. We note the 
following two facts:
\begin{itemize}
\item The map  \
$$\Phi: \oline \nf \to \oline N, \ \ X\mapsto \oline n(X):=
\exp(x_1 X_1)\cdot \ldots \cdot 
\exp(x_n X_n)$$
is a diffeomorphism. 
\item One can normalize the Haar measure $d\oline n $ of $\oline N$ in such a
way that: 
$$\Phi^*(d\oline n)= dx_1\cdot \ldots \cdot d x_n\, .$$ 
\end{itemize}
We introduce a norm on $\oline \nf$ by 
setting 
$$\|X\|^2:=\sum_{j=1}^n |x_j|^2\qquad (X\in \oline \nf)\, .$$ 

Finally we set 
$$f_\sigma(X):= f_\sigma(\tilde k(\oline n(X)))= 
\tilde a (\oline n(X))^{-\mu}$$
and summarize our discussion. 
\begin{lemma} Let $m>0$. Then there exists $C>0$ and  a finite dimensional 
$K$-spherical representation $(\sigma, W)$ of $G$ such that: 
\begin{enumerate}
\item $M\bs K - \oline N=\{ Mk\in M\bs K\mid f_\sigma (k)=0\}$. 
\item $|f_\sigma(X) |  \leq C\cdot (1 + \|X\|)^{-m}$
for all $X\in \oline \nf$. 
\end{enumerate}
\end{lemma}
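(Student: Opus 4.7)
I would prove the lemma by taking $\sigma$ to be a sufficiently high tensor power of a fixed ``base'' representation. Start with any faithful $K$-spherical irreducible regular representation $(\sigma_0, W_0)$ of $G$ with lowest weight $\mu_0 \in \af^*$; such a $\sigma_0$ exists by standard spherical-representation theory. Let
$$ p_0(X) \;:=\; \tilde a(\oline n(X))^{\mu_0} \;=\; \langle \sigma_0(\oline n(X)) v_K, v_0\rangle, $$
which is a polynomial in the coordinates $x_1,\dots,x_n$, since $\sigma_0$ applied to the ``polynomial in $X$'' element $\oline n(X)$ has polynomial matrix entries. The discussion preceding the lemma already shows that $f_{\sigma_0} = 1/p_0$ is non-negative on $M\bs K$, that $|f_{\sigma_0}|\le 1$ (so $p_0 \ge 1$ pointwise on $\oline\nf$), and that it vanishes exactly on $M\bs K-\oline N$; equivalently $p_0(X) \to +\infty$ uniformly as $\|X\|\to\infty$.

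The heart of the argument is a polynomial lower bound of the form
$$ p_0(X) \;\ge\; c_0\,(1+\|X\|)^{\alpha} \qquad (X\in\oline\nf), $$
for some constants $c_0,\alpha>0$. This is a H\"ormander--\L ojasiewicz inequality at infinity applied to the real polynomial $p_0$, which is bounded below by $1$ and proper. One may either invoke the standard semi-algebraic version of the inequality as a black box, or argue directly from the conjugation action of $A$ on $\oline\nf$: since $a\,\oline n(X)\,a^{-1}=\oline n(X')$ with $x'_j=a^{-\alpha_j}x_j$, the Iwasawa cocycle identity reduces the estimate to a compactness statement on the unit sphere in $\oline\nf$, from which a concrete value of $\alpha$ can be extracted in terms of $\mu_0$ and the restricted roots.

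With $c_0,\alpha$ in hand, choose an integer $N$ with $N\alpha\ge m$ and set $\sigma:=\sigma_0^{\otimes N}$ on $W:=W_0^{\otimes N}$, with the tensor product of the $\theta$-covariant inner product on $W_0$. Then $v_K^{\otimes N}$ is $K$-fixed, so $\sigma$ is $K$-spherical; the vector $v_0^{\otimes N}$ is $\oline P_{\rm min}$-invariant with $A$-weight $N\mu_0$, normalized, and pairs to $1$ with $v_K^{\otimes N}$. The matrix coefficient factorizes as $f_\sigma(g)=f_{\sigma_0}(g)^N$, so its zero locus on $M\bs K$ coincides with that of $f_{\sigma_0}$, giving (i). For (ii) one has on $\oline N$,
$$ |f_\sigma(X)| \;=\; p_0(X)^{-N} \;\le\; c_0^{-N}(1+\|X\|)^{-N\alpha} \;\le\; c_0^{-N}(1+\|X\|)^{-m}, $$
so $C:=c_0^{-N}$ works.

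\textbf{Main obstacle.} The only non-formal step is the polynomial lower bound $p_0(X)\ge c_0(1+\|X\|)^{\alpha}$; everything else is tensor-power bookkeeping built on the formulas already established before the lemma. Either route to this bound (classical \L ojasiewicz-at-infinity, or a hands-on estimate using $A$-equivariance of the Iwasawa cocycle together with compactness of the unit sphere in $\oline\nf$) is routine once set up, but it is where the actual analytic content of the lemma resides.
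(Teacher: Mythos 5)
Your proposal is correct, and it fills in exactly the analytic step that the paper leaves implicit. The paper offers no proof of this lemma beyond the word ``summarize'': the preceding discussion establishes that $f_\sigma(\tilde k(\oline n(X)))=\tilde a(\oline n(X))^{-\mu}=p_0(X)^{-1}$ with $p_0$ polynomial, proper, and $\geq 1$ (properness following from the open-dense embedding $\oline N\hookrightarrow M\bs K$, exactly as you argue), and then simply asserts that for $\sigma$ ``sufficiently regular and large'' the decay can be made of any prescribed order. Your two additions --- (a) the semi-algebraic/\L ojasiewicz-at-infinity lower bound $p_0(X)\geq c_0(1+\|X\|)^\alpha$, which follows from properness plus the Puiseux description of the one-variable semi-algebraic function $r\mapsto\min_{\|X\|=r}p_0(X)$, and (b) the tensor-power bookkeeping $f_{\sigma_0^{\otimes N}}=f_{\sigma_0}^N$ to push the exponent up to $N\alpha\geq m$ --- are exactly the right way to make the paper's assertion precise, and are consistent with its intent. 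A few remarks to tighten the write-up: the bound $|f_{\sigma_0}|\leq 1$ on $K$ needs the one-line observation that the $\theta$-covariant inner product is $K$-invariant, so $\sigma_0|_K$ is unitary and Cauchy--Schwarz applies; the lemma does not require $\sigma$ irreducible, so using $\sigma_0^{\otimes N}$ directly (rather than its Cartan component with lowest weight $N\mu_0$) is legitimate and is in fact cleaner because the factorization $f_\sigma=f_{\sigma_0}^N$ is then immediate; and in your sketch of the ``hands-on'' alternative to \L ojasiewicz, be aware that the coordinates $X\mapsto\oline n(X)=\prod_j\exp(x_jX_j)$ do not scale equivariantly under the radial dilation $X\mapsto rX$ but only under $\Ad(A)$ (which rescales the $x_j$ anisotropically by $a^{-\alpha_j}$), so the ``unit sphere compactness'' reduction is not as routine as the phrasing suggests; the semi-algebraic route is the safer one to cite.
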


\subsubsection{Dirac type sequences}\label{dirac}

Dirac sequences do not exist for Hilbert representations as they are 
features of an $L^1$-theory. However, rescaled they exist 
for the Hilbert representations we shall consider.

\par Recall our function $f_\sigma$ on $M\bs K$. We let 
$\xi=\xi_\sigma$ be the corresponding function transferred to $\oline N\simeq 
\oline \nf$ 
i.e. 
$$\xi(X):= \tilde a(\oline n (X))^{\rho +\lambda}
 f_\sigma(\tilde k(\oline n(X)))= \tilde a(\oline n (X))^{\rho +\lambda-\mu}
\, .$$
It is clear that $\xi$ is a $K$-finite vector for $\pi_{\lambda}$.

\par We recall that $\xi(X)$ satisfies 
the inequality  
$$|\xi(X)|\leq C\cdot (1+\|X\|)^{-m}$$
where we can choose $m$ as large as 
we wish (provided $\sigma$ is sufficiently regular and large). 
Record the normalization $\xi(0)=1$.

We will chose $m$ at least that large that $\xi$ becomes 
integrable and write 
$\|\xi\|_1$ for the corresponding 
$L^1(\oline N)$-norm. 

\par The operators $\pi_\lambda(a)$ can be understood as 
scaling operators in the non-compact picture. For our purpose the 
scaling in one direction of $A$ will be sufficient. To make this
precise we fix an element 
$Y\in \af$ such that $\alpha(Y)\geq 1$ for all roots 
$\alpha\in \Sigma(\af, \nf)$. For $t>0$ we put 
$$a_t:=\exp((\log t) Y)\, .$$ 
Note that for $\eta\in \af_\C^*$ one has 
$$a_t^\eta=t^{\eta(Y)}\, .$$
In the sequel we will often abbreviate and simply write 
$t^\eta$ for $t^{\eta(Y)}$.

\par In order to explain the idea of this section let us assume for a moment 
that $\lambda$ is real.  Then $\xi$ is a positive function and   

$$\left( {a_t^{\rho-\lambda}\over \|\xi\|_1}\cdot 
\pi_\lambda(a_t)\xi\right)_{t>0}$$
forms a Dirac sequence for $t\to \infty$ (If $\lambda$ is not real, then 
$\xi$ is oscillating and we have to be slightly more careful). 

In the compact picture this means 
$$\lim_{t\to\infty} {a_t^{\rho-\lambda}\over \|\xi\|_1}\cdot 
\pi_\lambda(a_t)f_\sigma =\delta_{Me}= \sum_{\tau\in\hat K_M} \delta_\tau\, .$$
It is our goal to understand this limit in the $K$-types: How large do we have to 
choose $t$ in dependence of $\tau$ such that the $\tau$-isotypical part 
of ${a_t^{\rho-\lambda}\over \|\xi\|_1}\cdot 
\pi_\lambda(a_t)f_\sigma$ approximates  $\delta_\tau$ well. It turns out that 
$t$ can be chosen polynomially in $\tau$.
If we denote by $D_\tau$ the transfer of the character $\delta_\tau$ to the 
non-compact model, the precise 
statement is as follows.

\begin{thm} Let $\lambda \in \af_\C^*$ and $N>0$. 
Then there exists a choice of $\sigma$ and hence of $\xi=\xi_\sigma\in
V_\lambda$, constants $c>2$, $C>0$ 
 such that for all $\tau\in \hat K_M$
one has 
$$[\pi_\lambda(a_{t(\tau)})\xi]_\tau= a_{t(\tau)}^{-\rho+\lambda} \cdot I_\xi\cdot 
 D_\tau + R_\tau$$
where $t(\tau):= (1+|\tau|)^c$, 
$$I_\xi:=\int_{\oline N} \xi(\oline n) \ d\oline n\neq 0$$
and remainder $R_\tau \in \H_\lambda[\tau]$ satisfying 
$$ {\|R(\tau)\|\over |a_{t(\tau)}^{-\rho+\lambda}|}\leq {C\over  (1+|\tau|)^N}\, .$$
\end{thm}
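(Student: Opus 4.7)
The plan is a quantitative Dirac-sequence argument in the non-compact model. Set
\[
\Psi_t := \frac{a_t^{\rho-\lambda}}{I_\xi}\,\pi_\lambda(a_t)\xi;
\]
then $R_\tau = a_{t(\tau)}^{-\rho+\lambda} I_\xi\bigl([\Psi_t]_\tau - D_\tau\bigr)$, so the theorem reduces to the uniform bound $\|[\Psi_t]_\tau - D_\tau\|_{\H_\lambda} \leq C(1+|\tau|)^{-N}$ at $t=t(\tau)=(1+|\tau|)^c$. In the non-compact picture, the formula $[\pi_\lambda(a)F](\oline n)=a^{\lambda+\rho}F(a^{-1}\oline n a)$ gives $\Psi_t(\oline n) = I_\xi^{-1} a_t^{2\rho} \xi(a_t^{-1}\oline n a_t)$, and the change of variable $\oline m = a_t^{-1}\oline n a_t$ (with Jacobian $a_t^{-2\rho}$ on $\oline N$) yields $\int_{\oline N}\Psi_t\,d\oline n = 1$ for every $t$. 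Thus $\{\Psi_t\}$ is an $L^1$-normalized family concentrating at $\oline e\in\oline N$ as $t\to\infty$.

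To control the $\tau$-isotypic component I pair against an arbitrary $\eta\in\H_{-\lambda}[\tau]$ using the invariant pairing \eqref{pair1}. The reproducing-kernel property of $\delta_\tau$ on $L^2(M\bs K)[\tau]$ gives $\langle\eta,D_\tau\rangle = \eta(\oline e)$, and the same change of variable produces the key identity
\[
\langle\eta,\Psi_t\rangle - \eta(\oline e) \;=\; I_\xi^{-1}\int_{\oline N}\xi(\oline m)\bigl[\eta(a_t\oline m a_t^{-1})-\eta(\oline e)\bigr]\,d\oline m.
\]
In the coordinates $\oline n(X) = \exp(m_1 X_1)\cdots \exp(m_n X_n)$ from the excerpt one computes $a_t\oline n(X)a_t^{-1} = \oline n(t^{-\alpha_1(Y)} m_1,\ldots,t^{-\alpha_n(Y)} m_n)$ with each $\alpha_j(Y)\geq 1$. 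Taylor-expanding $\eta$ around $\oline e$ to a fixed order $J$ converts the integrand into a finite sum whose order-$\ell$ term contributes $t^{-\sum_j\beta_j\alpha_j(Y)}\leq t^{-\ell}$ times a mixed moment of $\xi$ times a pointwise derivative $(X^\beta\eta)(\oline e)$, $|\beta|=\ell$, plus an order-$(J+1)$ remainder bounded by the $(J+1)$-st moments of $\xi$ times sup-norms of $(J+1)$-st derivatives of $\eta$ on a fixed neighborhood of $\oline e$.

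Two quantitative inputs close the argument. First, $\xi(X)=\tilde a(\oline n(X))^{\rho+\lambda-\mu}$, and by choosing the regular $K$-spherical representation $\sigma$ with lowest weight $\mu$ sufficiently deep in the negative chamber one arranges $|\xi(X)|\leq C(1+\|X\|)^{-m}$ with $m$ as large as required, so all moments up to any prescribed order are finite (and simultaneously $I_\xi(\lambda)\neq 0$: for real $\lambda$ the integrand is positive and $I_\xi$ extends holomorphically in $\lambda$, so $I_\xi(\lambda)\neq 0$ on a dense open set, which one can guarantee at the prescribed parameter by enlarging $\sigma$). Second, for $\eta\in\H_{-\lambda}[\tau]$ of unit norm the pointwise derivative $|(X^\beta\eta)(\oline e)|$ is bounded by $C(1+|\tau|)^{s_0+|\beta|}$ for a fixed exponent $s_0$ independent of $\tau$: a local Sobolev embedding $H^s(\oline N)\hookrightarrow C^0$ controls pointwise evaluation by the $H^{s+|\beta|}$-norm, and Lemma~\ref{lem=K} (applicable since $V_{-\lambda}$ has infinitesimal character) converts this full-Sobolev norm into a $K$-Sobolev norm of order $s+|\beta|$, which in turn scales like $(1+|\tau+\rho_\kf|^2-|\rho_\kf|^2)^{(s+|\beta|)/2}\|\eta\|$ on the $\tau$-isotype.

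Substituting both bounds gives
\[
|\langle\eta,\Psi_t\rangle - \eta(\oline e)| \;\leq\; C\sum_{\ell=1}^{J+1}(1+|\tau|)^{s_0+\ell}\,t^{-\ell}.
\]
At $t=(1+|\tau|)^c$ the dominant contribution is $C(1+|\tau|)^{s_0+1-c}$, which is at most $(1+|\tau|)^{-N}$ as soon as $c>s_0+N+1$; in particular $c>2$. Taking the supremum over $\eta\in\H_{-\lambda}[\tau]$ of unit norm and rescaling by $a_t^{-\rho+\lambda}I_\xi$ then yields the claim. The principal obstacle is the Sobolev-type estimate above: transferring full-Sobolev control to $K$-Sobolev control via the Casimir scalar $\chi_{-\lambda}(\cC)$ must be executed so as to remain locally uniform in $\lambda\in\af_\C^*$, which is what will enable the holomorphic-dependence assertions alluded to in Section~7.
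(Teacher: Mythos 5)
Your plan is the same quantitative Dirac-sequence argument as the paper's, executed in the same non-compact model with the same scaling $t(\tau)=(1+|\tau|)^c$ and the same inputs (polynomial decay of $\xi$ for large $\sigma$, polynomial growth of $K$-isotypic data in $|\tau|$). The genuine variation is how you extract the estimate: the paper computes the Fourier coefficients $b_{i,j}(t)=\langle \pi_\lambda(a_t)\xi, D_\tau^{i,j}\rangle$ against the explicit basis $\{D_\tau^{i,j}\}$, splits the integral at $\|X\|=t^{-1/2}$, and on the head uses only the mean-value theorem with the first-order gradient bound $\|\nabla D_\tau^{i,j}\|\leq C(1+|\tau|)$. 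You instead test against an arbitrary unit $\eta\in\H_{-\lambda}[\tau]$ via the bilinear pairing and Taylor-expand $\eta$ to arbitrary order, paying with the Sobolev estimate $|(\partial^\beta\eta)(\oline e)|\leq C(1+|\tau|)^{s_0+|\beta|}$. Both are workable; the paper's is more elementary (first derivatives only, no Sobolev embedding on $\oline N$, no appeal to Lemma~\ref{lem=K}).

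Two places need repair. First, your bound on the Taylor remainder is stated as ``sup-norms of $(J{+}1)$-st derivatives of $\eta$ on a \emph{fixed} neighborhood of $\oline e$,'' but the integration variable $\oline m$ ranges over all of $\oline N$, and for $\|X\|\gtrsim t$ the point $a_t\oline m(X) a_t^{-1}$ leaves any fixed neighborhood; the Lagrange segment $[0,\Ad(a_t)X]$ is not uniformly bounded. What is true, and what closes the argument, is that $\eta_{nc}$ and its partials are \emph{polynomially} bounded on $\oline N$ (with exponent depending only on $\Re\lambda$ and linearly on the derivative order, via the $\tilde a(\cdot)^{-\lambda+\rho}$ factor and the sup-norm growth of the compact-picture $K$-type), so that the polynomial decay $|\xi(X)|\leq C(1+\|X\|)^{-m}$ absorbs the tail once $m$ is taken large enough; this is precisely the role of the paper's split into $\|X\|\lessgtr t^{-1/2}$ and the $b^1_{i,j}$ term, and you should state the global polynomial bound rather than a ``fixed neighborhood'' bound. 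Second, your justification that $I_\xi\neq 0$ can be arranged at the prescribed $\lambda$ is incomplete: knowing that $\lambda\mapsto I_{\xi_\sigma}(\lambda)$ is nonvanishing on a dense open set (for each fixed $\sigma$) does not by itself produce a $\sigma$ making it nonzero at the given $\lambda$. The paper handles this by noting $\xi(X)=\tilde a(\oline n(X))^{\rho+\lambda-\mu}$ and applying Carleman's theorem to the family obtained by replacing $\mu$ by $k\mu$, $k\in\N$, which yields infinitely many $k$ with $I_{\xi_{k\mu}}(\lambda)\neq 0$ at the fixed $\lambda$; you should either cite that or supply an equivalent argument (e.g.\ the map $k\mapsto I_{\xi_{k\mu}}(\lambda)$ is not identically zero and extends to a nonzero entire function of controlled order).

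Subject to these two fixes, your computation of the key identity, the duality argument giving $\|[\Psi_t]_\tau - D_\tau\|$ from the pairing, and the choice $c> s_0+N+1$ are all sound and recover the theorem.
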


\begin{proof} Recall the $M$-fixed  functions 
$\delta_\tau^{i,j} \in L^2(M\bs K)_\tau$, 
$1\leq i,j \leq l(\tau)$
for $\tau \in \hat K_M$. In the sequel we abbreviate 
and set $d:=d(\tau)$, $l:=l(\tau)$.

Let $D_\tau^{i,j}(\oline n)= \tilde a(\oline n)^{\rho +\lambda} \delta_\tau^{i,j} 
(\tilde k(\oline n))$ the transfer of $\delta_\tau^{i,j}$ to the non-compact 
model. 
We also set $D_\tau^{i,j}(X):=D_\tau^{i,j}(\oline n(X))$ 
for $X\in \oline \nf$. 
Let us note that $|D_\tau^{i,j}(0)|=\delta_{ij}$. 

As $\pi_\lambda(a) \xi$ is $M$-fixed for all $a\in A$ we conclude that 

$$[\pi_\lambda(a_t)\xi]_\tau=\sum_{i,j=1}^l b_{i,j} (t) \cdot 
d \cdot D_\tau^{ij}\, .$$
If $\la\cdot, \cdot\ra$ denotes the Hermitian bracket on
$\H_\lambda=L^2(\oline N,  \tilde a(\oline n)^{-2\Re \lambda} d\oline n)$, 
then the coefficients $b_{i,j}(t)$ are obtained by 
the integrals

$$ b_{i,j}(t) =\la \pi_\lambda(a_t)\xi, D_\tau^{i,j}\ra=
\int_{\oline\nf} (\pi_\lambda(a_t)\xi)(X) 
\cdot \oline {D_\tau^{i,j}(X)} \cdot \tilde a(\oline n(X))^{-2\Re\lambda}\
dX\, , 
$$
where we used the notation 
$$dX:=dx_1\cdot \ldots\cdot dx_n\, $$ 
for $X=\sum_{j=1}^n x_j X_j$.
\par Fix $1\geq t_0>0$ and set $t=t_0^{-2}$.

We split the integrals for $ b_{i,j}(t)$ into two parts 
$ b_{i,j}(t)= b_{i,j}^1(t)+ b_{i,j}^2(t) $ with

$$b_{i,j}^1( t):=\int_{\{ \|X\|\geq t_0 \}} (\pi_\lambda(a_t)\xi)(X) 
\cdot \oline {D_\tau(X)} \cdot \tilde a(\oline n(X))^{-2\Re\lambda}\ dX\,  . $$
In our first step of the proof we wish to estimate $b_{i,j}^1( t)$. 
For that let $C, q_1>0$ be such that 
$$\tilde a(\oline n(X))^{-2\Re\lambda}\leq C \cdot (1+\|X\|)^{q_1}\, .  $$
Likewise, by the definition of $D_\tau^{i,j}$ we obtain constants
$C, q_2>0$ which only depend on $\Re \lambda$ and 
such that 
$$|D_\tau^{i,j} (X)|\leq C \cdot(1+\|X\|)^{q_2}$$
for all $\tau$ and $1\leq i,j\leq l$. Set $q:=q_1+q_2$.

From the inequalities just stated 
we arrive at:

$$|b_{i,j}^1(t) |\leq  C \cdot 
 t^{\Re \lambda +\rho}  
\int_{\{ \|X\|\geq  t_0\}} 
|\xi(\Ad(a_t)^{-1} X)|\cdot (1+\|X\|)^q  \ dX\, .$$
As $|\xi(X)|\leq C \cdot (1+\|X\|)^{-m}$ for some constants 
$C, m>0$ we thus get that 

$$|b_1^i(\tau,t)|\leq  C \cdot 
 t^{\Re \lambda +\rho}  
\int_{\{ \|X\|\geq  t_0\}} 
{(1+\|X\|)^q\over (1+ \|\Ad(a_t)^{-1}X\|)^m}  \ dX\, .$$
By the definition of $a_t$ we get that $\|\Ad(a_t)^{-1} X\|\geq t\|X\|$
and hence 
$$|b_{i,j}^1(t)| \leq  C \cdot 
 t^{\Re \lambda +\rho}  
\int_{\{ \|X\|\geq  t_0\}} 
{(1+\|X\|)^q\over (1+ t \|X\|)^m}  \ dX\, .$$
We continue this estimate by employing 
polar coordinates for $X\in \oline \nf$: 

\begin{align*} 
|b_{i,j}^1(t)|&  \leq C \cdot  t^{\Re \lambda+\rho}
\int_{t_0}^\infty {r^n  (1+r)^q \over  (1+ tr)^m} \ {dr\over r}\\
& =   C\cdot t_0^{n-2({\Re \lambda}+\rho)}  
\int_1^\infty {r^n  (1+t_0 r)^q \over  (1+ tt_0 r)^m} \ {dr\over r}\\
& =   C\cdot t_0^{n-2({\Re \lambda} +\rho)}
\int_1^\infty {r^n (1+t_0 r)^q  \over  (1+ t_0^{-1} r )^m} \ {dr\over r}\\
&=  C\cdot t_0^{n-2 {(\Re \lambda+\rho)} +m}
\int_1^\infty {r^n  (1+t_0 r)^q \over  (t_0 + r )^m} \ {dr\over r}\\
&\leq  C \cdot t_0^{n-2({\Re \lambda}+\rho) +m}
\int_1^\infty  r^{n+q-m} \ {dr\over r}\, .
\end{align*}

Henceforth we request that $m> n+q+1$. Thus 
for every $m'>0$ there exist a choice of $\xi$ and 
a  constant $C>0$ such that
\begin{equation} \label{rest1} |b_{i,j}^1(t)|\leq C \cdot t^{-m'}\, .\end{equation}

\par Next we choose $t$ in relationship to $|\tau|$. 
Basic finite 
dimensional  representation theory yields that 
in a fixed  compact neighborhood of $X=0$ 
the gradient of $D_\tau^{i,j}$ is bounded by $C\cdot (1+|\tau|)$
for a constant $C$ independent of $\tau$. Let $\gamma>1$. 
Then for $\|X\|\leq  (1+|\tau|)^{-\gamma}$ the mean value theorem 
yields the following estimate 

\begin{equation}\label{De} 
|D_\tau^{i,j}(X)- D_\tau^{i,j}(0)|\leq C\cdot (1+|\tau|)^{-\gamma +1}
\end{equation} 
This brings us to our choice of $t$, namely 

$$t=t(\tau):= (1+|\tau|)^{2\gamma}\, .$$

Recall the definition of 

$$I_\xi =\int_{\oline\nf} \xi(X)\  dX$$

Here we might face the obstacle that $I_\xi$ might be zero. 
However as $\xi(X)=\tilde a(n(X))^{\rho+\lambda-\mu}$ it follows 
$I\neq 0$ provided $\mu$ is large enough. 
So for any $m'$ we find such a non-zero $I_\xi$. 
\par In the following computation we will use 
the simple identity: 
$$\int_{\oline \nf} \pi_\lambda(a_t) f(X)\ dX = t^{\lambda-\rho} 
\int_{\oline \nf} f(X) \ dX$$
for all integrable functions $f$. 
Now if $i\neq j$, then $D_\tau^{i,j}(0)=0$ and we obtain from 
(\ref{rest1}) and (\ref{De})  that

\begin{align*} b_{i,j}(t)
&= \int_{\{\|X\|\leq t_0\}} 
(\pi_\lambda(a_t)\xi)(X)\cdot\oline{ D_\tau^{i,j}(X)} \ \tilde
a(\oline n(X))^{-2\Re \lambda} dX \\
&\quad + O\left({1\over (1+|\tau|)^{2\gamma m'}}\right)\\
& =  \int_{\{\|X\|\leq t_0\}}  (\pi_\lambda(a_t)\xi)(X)
\oline{\left( D_\tau^{i,j}(X)-D_\tau^{i,j}(0)\right)} \ \tilde
a(\oline n(X))^{-2\Re \lambda} \ dX\\
&\quad +O\left({1\over (1+|\tau|)^{2\gamma m'}}\right)\\ 
&=   t^{\lambda-\rho}\cdot \|\xi\|_1 \cdot 
O\left({1\over (1+|\tau|)^{\gamma-1}}\right)
+O\left({1\over (1+|\tau|)^{2\gamma m'}}\right)\, .\end{align*}

For $i=j$ we have $D_\tau^{i,i}(0)=1$ and we obtain in a similar fashion 
that

\begin{align*}  b_{i,i}(t) &= \int_{\{\|X\|\leq t_0\}}  (\pi_\lambda(a_t)\xi)(X)
\cdot \tilde a(\oline n(X))^{-2\Re \lambda} \ dX\\
& + t^{\lambda-\rho}\cdot \|\xi\|_1 \cdot 
O\left({1\over (1+|\tau|)^{\gamma-1}}\right)
+O\left({1\over (1+|\tau|)^{2\gamma m'}}\right) \\
 &= t^{\lambda -\rho} \cdot I_\xi + O\left({1\over (1+|\tau|)^{\gamma-1}}\right)
+O\left({1\over (1+|\tau|)^{2\gamma m'}}\right) \, .\\
\end{align*}
If we choose $c:=2\gamma$ and $\gamma-1 = N$ and $m'$ large enough, 
the assertion of the theorem follows.   
\end{proof}

The proof of the theorem shows that the approximation can be made 
uniformly on any compact subset $Q\subset \af_\C^*$. 
We further observe that $a_{t(\tau)}$ is bounded from above and below 
by powers of $1+|\tau|$. If switch to the compact models $\H_\lambda=L^2(M\bs
K)$ and denote $f_\sigma$ also by $\xi$, 
then an alternative 
version of the theorem is as follows:

\begin{thm}\label{th=1} Let $Q\subset \af_\C^*$ be a compact subset and $N>0$. 
Then there exists $\xi\in \C[M\bs K]$ and constants $c_1,c_2>0$ 
such that for all $\tau\in \hat K_M$, $\lambda\in Q$,   there exists 
$a_\tau\in A$, independent 
of $\lambda$,  with $\|a_\tau\|\leq (1+|\tau|)^{c_1}$ 
and numbers $b(\lambda,\tau)\in \C$  
such that 
$$\|[\pi_\lambda(a_\tau) \xi]_\tau  - b(\lambda,\tau) \delta_\tau\|
\leq {1\over (|\tau|+1)^{N+c_2}}$$ 
and 
$$|b(\lambda,\tau)|\geq {1\over (1+|\tau|)^{c_2}}\, .$$
Here $\|\cdot\|$ refers to the norm in $L^2(M\bs K)$. 
\end{thm}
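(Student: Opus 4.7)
The plan is to deduce Theorem~\ref{th=1} from the preceding (non-compact-picture) theorem by transferring to the compact model and tracking all $\lambda$-dependences uniformly over the compact set $Q\subset\af_\C^*$.

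First I would invoke the isometry $\Res_K\circ\Res_{\oline N}^{-1}$ between $L^2(\oline N,\tilde a(\oline n)^{-2\Re\lambda}\,d\oline n)$ and $L^2(M\bs K)$; under this isomorphism $D_\tau^{i,j}$ corresponds to $\delta_\tau^{i,j}$, hence $D_\tau$ to $\delta_\tau$, and norms of remainders are preserved. Thus the expansion from the preceding theorem reads, in the compact picture,
\[
[\pi_\lambda(a_{t(\tau)})\xi]_\tau \;=\; a_{t(\tau)}^{-\rho+\lambda}\,I_\xi(\lambda)\,\delta_\tau + R_\tau .
\]
Set $a_\tau:=a_{t(\tau)}$ with $t(\tau)=(1+|\tau|)^c$; since $Y\in\af$ is fixed, $a_\tau$ is independent of $\lambda$ and $\|a_\tau\|\leq(1+|\tau|)^{c_1}$ for some $c_1$ depending only on $c$ and $Y$. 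Define $b(\lambda,\tau):=a_\tau^{-\rho+\lambda}I_\xi(\lambda)$; the scalar $|a_\tau^{-\rho+\lambda}|=(1+|\tau|)^{c(\Re\lambda-\rho)(Y)}$ is then pinched between $(1+|\tau|)^{\pm cM}$, where $M:=\sup_{\lambda\in Q}|(\Re\lambda-\rho)(Y)|<\infty$.

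The main obstacle is to secure, by a single choice of $\sigma$, a uniform lower bound $\inf_{\lambda\in Q}|I_{\xi_\sigma}(\lambda)|\geq\e>0$. For each fixed $\sigma$ the integral $I_{\xi_\sigma}(\lambda)=\int_{\oline\nf}\tilde a(\oline n(X))^{\rho+\lambda-\mu}\,dX$ is holomorphic in $\lambda$, and Carleman's theorem---applied as in the preceding proof to the ray $\N\mu_0$ of lowest weights---shows $I_{\xi_\sigma}$ is not identically zero for infinitely many $\sigma$ in that ray; its zero set is then a proper analytic subset of $\af_\C^*$. By varying $\sigma$ along the ray---or, equivalently, by invoking the Gindikin--Karpelevich-type product formula expressing $I_{\xi_\sigma}$ as a product of Gamma factors whose pole/zero locations are governed by $\mu$---one produces a single $\sigma$ whose zero set is disjoint from $Q$; compactness then delivers the claimed $\e$. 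This genericity step is the only delicate point in the argument.

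Finally I would rerun the estimates of the preceding theorem with all constants taken uniformly in $\lambda\in Q$: the polynomial bounds on $\tilde a(\oline n(X))^{-2\Re\lambda}$ and on $|D_\tau^{i,j}(X)|$ depend only on $\sup_{\lambda\in Q}|\Re\lambda|$, and the gradient bound producing~(\ref{De}) is independent of $\lambda$. Applying the previous theorem with threshold $N+c_2+cM$ in place of $N$, and transferring via the isometry above, one gets
\[
\|R_\tau\| \;\leq\; C\,|a_\tau^{-\rho+\lambda}|\,(1+|\tau|)^{-(N+c_2+cM)} \;\leq\; (1+|\tau|)^{-(N+c_2)},
\]
while $|b(\lambda,\tau)|\geq\e\,(1+|\tau|)^{-cM}\geq(1+|\tau|)^{-c_2}$ once $c_2$ is taken large enough (and, if needed, $\xi$ is rescaled so that $\e\geq 1$). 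Both assertions of Theorem~\ref{th=1} follow, with $c_1,c_2$ independent of $\lambda\in Q$ and $\tau\in\hat K_M$.
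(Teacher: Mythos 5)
Your proposal follows the paper's intended route: transfer the non-compact-picture theorem to $L^2(M\backslash K)$ via the unitary transfer map, set $a_\tau := a_{t(\tau)}$ and $b(\lambda,\tau) := a_\tau^{-\rho+\lambda}I_\xi(\lambda)$, and secure a uniform lower bound on $|I_\xi(\lambda)|$ over $Q$. Your identification of the nonvanishing of $I_\xi$ over all of $Q$ as the delicate point, and your fix via Gindikin--Karpelevich factors or genericity along the ray $\N\mu_0$, is exactly the content the paper's one-sentence remark glosses over, and it is reasonable.

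There is, however, a gap in the exponent bookkeeping which causes your argument to fail precisely in the main case of interest. The scaling exponent $c$ in $t(\tau)=(1+|\tau|)^c$ is not a free parameter: the proof of the preceding theorem takes $c=2\gamma$ with $\gamma-1$ equal to the relative decay rate, so $c=2(N'+1)$ if $N'$ is the exponent used there. You then prescribe $N':=N+c_2+cM$ together with $c_2\geq cM$, where $M:=\sup_{\lambda\in Q}|(\Re\lambda-\rho)(Y)|$. Substituting $c=2(N'+1)$ into these relations yields $c(1-4M)=2(N+1)+\text{const}$, which has no positive solution once $M\geq 1/4$. But $Y$ is fixed with $\alpha(Y)\geq 1$ on simple roots, so $\rho(Y)\geq 1/2$, and hence $M\geq\rho(Y)\geq 1/2$ as soon as $Q$ meets $i\af^*$ --- the case needed for the Casselman--Wallach application. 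The culprit is that you replaced $|a_\tau^{-\rho+\lambda}|=(1+|\tau|)^{c(\Re\lambda-\rho)(Y)}$ by its symmetric majorant $(1+|\tau|)^{cM}$; near the unitary axis $(\Re\lambda-\rho)(Y)<0$, so this quantity is in fact $\leq 1$, and your bound is off by a factor as large as $(1+|\tau|)^{2c\rho(Y)}$. Replacing $M$ by the signed extremes $\mu_+:=\sup_Q(\Re\lambda-\rho)(Y)$ and $\mu_-:=\inf_Q(\Re\lambda-\rho)(Y)$ gives instead $c_2\geq -c\mu_-$ and $N'\geq N+c_2+c\mu_+$, hence the self-consistency $c(1-2D)\geq 2(N+1)$ with $D:=\mu_+-\mu_-$ the diameter of $\Re Q$ in the $Y$-direction. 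This closes the argument for $D<1/2$ (in particular for $Q$ a single point, where $D=0$), but for a general compact $Q$ one still needs a further device --- vanishing moments and a higher-order Taylor expansion of $D_\tau^{i,j}$, or a covering argument --- which neither your proposal nor the paper's brief remark supplies.
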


Finally we deduce the following lower  bound for matrix coefficients.
Recall the non-degenerate complex bilinear 
$G$-equivariant pairing $(\cdot, \cdot)$ between 
$\H_\lambda$ and $\H_{-\lambda}$.

\begin{cor} \label{Cor1} Let $Q\subset \af_\C^*$ be a compact subset. 
Then there exists $\xi \in \C[M\bs K]$, 
constants $c_1, c_2, c_3>0$ such that 
$$\sup_{g\in G \atop \|g\|\leq (1+|\tau|)^{c_1}}
|(\pi_\lambda(g)\xi, v)| \geq c_2 {1\over (1+|\tau|)^{c_3}} \|v\|$$
for all $\lambda \in Q$, 
$\tau\in \hat K_M$ and $v\in V_{-\lambda}[\tau]$. Here $\|v\|$ refers 
to the norm on $\H_{-\lambda}= L^2(M\bs K)$.
In particular there exist a $s\in \R$ 
such that 
$$\sup_{g\in G \atop \|g\|\leq (1+|\tau|)^{c_1}}
|(\pi_\lambda(g)\xi, v)\geq c_2 \|v\|_{s, K}$$
for all  $\lambda \in Q$, $\tau\in \hat K_M$ and $v\in V_{-\lambda}[\tau]$. 
\end{cor}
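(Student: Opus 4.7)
The goal is to upgrade the Dirac-type approximation of Theorem \ref{th=1} into a matrix coefficient lower bound, via two moves: translate $v$ by an element of $K$ to amplify its value at the basepoint $Me$, then invoke the reproducing property of $\delta_{\tau^*}$ under the bilinear pairing $\int_{M\bs K}\xi(Mk)v(Mk)\,d(Mk)$. A preliminary point of bookkeeping: by Schur orthogonality on $K$, this pairing couples the $\tau$-isotype of $\H_\lambda$ only with the $\tau^*$-isotype of $\H_{-\lambda}$. The Weyl group preserves weight norms, so $|\tau^*|=|\tau|$, and applying Theorem \ref{th=1} with $\tau^*$ in place of $\tau$ preserves every polynomial bound.

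Fix $\xi\in\C[M\bs K]$ as supplied by Theorem \ref{th=1} for a parameter $N$ to be chosen large. Given $v\in V_{-\lambda}[\tau]$, set $g=ka_{\tau^*}$, where $a_{\tau^*}\in A$ is the element from Theorem \ref{th=1} (satisfying $\|a_{\tau^*}\|\leq(1+|\tau|)^{c_1}$) and $k\in K$ is to be selected depending on $v$; since $K$ is compact, $\|g\|\leq C(1+|\tau|)^{c_1}$. By $G$-equivariance of the pairing and Schur orthogonality,
$$(\pi_\lambda(g)\xi,v)=([\pi_\lambda(a_{\tau^*})\xi]_{\tau^*},\pi_{-\lambda}(k^{-1})v).$$
Substituting the decomposition $[\pi_\lambda(a_{\tau^*})\xi]_{\tau^*}=b(\lambda,\tau^*)\delta_{\tau^*}+R_{\tau^*}$ from Theorem \ref{th=1}, the reproducing identity $(\delta_{\tau^*},w)=w(Me)$ for $w$ of $K$-type $\tau$ (a direct consequence of Schur orthogonality on $K$ applied to $L^2(M\bs K)_{\tau^*}\otimes L^2(M\bs K)_\tau$) together with $(\pi_{-\lambda}(k^{-1})v)(Me)=v(Mk^{-1})$ identifies the main term as $b(\lambda,\tau^*)\,v(Mk^{-1})$. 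The remainder obeys $|(R_{\tau^*},\pi_{-\lambda}(k^{-1})v)|\leq\|R_{\tau^*}\|\,\|v\|\leq(1+|\tau|)^{-(N+c_2)}\|v\|$ by Cauchy--Schwarz and the unitarity of $\pi_{-\lambda}|_K$ on the compact model.

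Since $\|v\|_{L^\infty(M\bs K)}\geq\|v\|_{L^2(M\bs K)}=\|v\|$ for the normalized measure, we may choose $k$ so that $|v(Mk^{-1})|\geq\|v\|$. Combining $|b(\lambda,\tau^*)|\geq(1+|\tau|)^{-c_2}$ with the remainder estimate and taking $N$ large enough that the correction is subdominant produces the stated lower bound with $c_3=c_2$. For the $K$-Sobolev statement, $\1-\Delta_\kf$ acts on $V_{-\lambda}[\tau]$ by the scalar $1+|\tau+\rho_\kf|^2-|\rho_\kf|^2$, so $\|v\|_{s,K}=(1+|\tau+\rho_\kf|^2-|\rho_\kf|^2)^{s/2}\|v\|$; choosing $s<0$ sufficiently negative gives $\|v\|_{s,K}\lesssim(1+|\tau|)^{-c_3}\|v\|$, which combined with the first bound yields the second inequality. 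The main technical care lies in (a) identifying the correct $K$-type ($\tau^*$ rather than $\tau$) against which to decompose $\pi_\lambda(a_{\tau^*})\xi$, and (b) ensuring uniformity of the implied constants as $\lambda$ ranges over the compact set $Q$; both points are handled by the Weyl invariance of weight norms and by the uniformity already built into Theorem \ref{th=1}.
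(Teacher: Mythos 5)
Your argument is correct and supplies the deduction from Theorem \ref{th=1} that the paper leaves implicit after the words ``Finally we deduce.'' The essential bookkeeping point --- that the bilinear pairing $(\cdot,\cdot)$ couples $V_{-\lambda}[\tau]$ with the $\tau^*$-isotype of $\H_\lambda$ rather than the $\tau$-isotype, so that Theorem~\ref{th=1} must be invoked at $\tau^*$ (harmlessly, since $|\tau^*|=|\tau|$) --- is exactly right, as is the amplification step using $\|v\|_{L^\infty}\geq\|v\|_{L^2}$ and the reproducing property of $\delta_{\tau^*}$.

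One small point you should flag: the phrase ``taking $N$ large enough that the correction is subdominant'' does not work uniformly near $|\tau|=0$. The ratio of remainder to main term from Theorem~\ref{th=1} is $(1+|\tau|)^{-N}$, and for the trivial $K$-type (and any other $\tau$ with $|\tau|=0$, which exist when $K$ is disconnected) this equals $1$ for every $N$, so the subtraction gives nothing. For nontrivial types $|\tau|$ is bounded away from $0$, so there the bound is fine; the exceptional types form a finite set and are easily disposed of separately --- e.g.\ for the trivial type one may take $g=\1$ and use $(\xi,\mathbf 1)=\int_K f_\sigma\,dk>0$, which is independent of $\lambda$, together with compactness of $Q$. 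This patch should be recorded, but it does not affect the substance of your argument, which is the intended one.
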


Thus Theorem 7.1 in conjunction with 
the above Corollary yields the Casselman-Wallach Theorem for spherical 
principal series: 

\begin{cor}\label{c=unique} Let $\lambda\in \af_\C^*$ and $V_\lambda$ the  
Harish-Chandra module of the corresponding 
spherical principal series. Then $V_\lambda$ admits a unique smooth 
Fr\'echet globalization. 
\end{cor}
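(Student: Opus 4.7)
The plan is to deduce uniqueness from Lemma~\ref{lemm} applied to $V=V_\lambda$, with the duality $\tilde V_\lambda\cong V_{-\lambda}$ supplied by the $G$-equivariant pairing (\ref{pair1}) between $\H_{-\lambda}$ and $\H_\lambda$; the two required matrix-coefficient lower bounds (one for $V_\lambda$, one for $\tilde V_\lambda$) will come from a double application of Corollary~\ref{Cor1}.

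First I fix on $V_\lambda$ the $K$-invariant Hilbert norm $q$ inherited from the globalization $\H_\lambda=L^2(M\backslash K)$; this $q$ is $G$-continuous because $\H_\lambda$ is a Hilbert representation. Under the pairing (\ref{pair1}), $\tilde V_\lambda$ is identified with $V_{-\lambda}$, and the dual norm $q^*$ is precisely the $L^2(M\backslash K)$ norm on $V_{-\lambda}$, which is in turn $K$-invariant and $G$-continuous.

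Next I apply Corollary~\ref{Cor1} twice, with $Q=\{\lambda\}$ and with $Q=\{-\lambda\}$, to obtain a vector $\xi\in V_\lambda$ and a vector $\xi'\in V_{-\lambda}$ satisfying, for all $\tau\in\hat K_M$,
$$\sup_{\|g\|\leq (1+|\tau|)^{c_1}}|(\pi_\lambda(g)\xi,v)|\geq c_2(1+|\tau|)^{-c_3}\|v\|,\qquad v\in V_{-\lambda}[\tau],$$
and symmetrically for $\xi'$ against $v\in V_\lambda[\tau]$. Using $G$-equivariance of the pairing, $(\pi_\lambda(g)\xi,v)=(\xi,\pi_{-\lambda}(g^{-1})v)$, together with $\|g^{-1}\|=\|g\|$, these translate into matrix-coefficient lower bounds in exactly the form demanded by Lemma~\ref{lemm}: $\xi'$ is a linear functional on $V_\lambda$ giving the bound required for $V$, and $\xi$ is a linear functional on $V_{-\lambda}$ giving the bound required for $\tilde V$.

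Finally I address the technical requirement that the chosen functionals belong to a $\cZ(\gf)$-stable \emph{generating} set. The spherical principal series $V_\lambda$ and $V_{-\lambda}$ carry infinitesimal characters, so $\cZ(\gf)$ acts by scalars on both, and every finite collection of vectors is automatically $\cZ(\gf)$-stable. Since Harish-Chandra modules have finite length and hence are finitely generated, I extend $\{\xi'\}$ to a finite generating set of $\tilde V_\lambda=V_{-\lambda}$ and $\{\xi\}$ to a finite generating set of $V_\lambda$; the maxima in Lemma~\ref{lemm} are then dominated below by the privileged term. Lemma~\ref{lemm} now yields that $V_\lambda$ is good, i.e.\ admits a unique SF-globalization. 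The one step that deserves care is translating the pairing-based estimate of Corollary~\ref{Cor1} into the matrix-coefficient estimate required by Lemma~\ref{lemm}; once the identification $\tilde V_\lambda\leftrightarrow V_{-\lambda}$ is set up correctly, the rest is bookkeeping.
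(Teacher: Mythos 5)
Your argument is correct and is exactly the paper's approach: the paper's own proof is a one-liner invoking the duality $\tilde V_\lambda \simeq V_{-\lambda}$, Corollary~\ref{Cor1}, and Lemma~\ref{lemm}. Your write-up simply spells out the details that the paper leaves implicit (the double application of Corollary~\ref{Cor1} at $\lambda$ and $-\lambda$, the identification of $q^*$ with the $L^2(M\backslash K)$ norm on $V_{-\lambda}$, and the observation that the infinitesimal character makes the $\cZ(\gf)$-stability hypothesis of Lemma~\ref{lemm} automatic).
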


\subsubsection{Constructions in the Schwartz algebra}\label{cs}

Let us fix  a relatively compact open 
neighborhood $Q\subset \af_\C^*$. We choose the $K$-finite 
element $\xi\in \C[M\bs K] $ such that the conclusion 
of Theorem \ref{th=1} is satisfied. 

\begin{lem}\label{l=123} Let $U$ be an $\Ad(K)$-invariant neighborhood of 
$\bf 1$ in $G$ and $\F(U)$ the space of $\Ad(K)$-invariant 
test functions supported in $U$. Then there exists a holomorphic map 

$$Q\to \F(U), \ \ \lambda\mapsto h_\lambda$$
such that $\Pi_\lambda(h_\lambda)\xi=\xi$. 
\end{lem}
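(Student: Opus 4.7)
The plan is to reduce the problem to a finite-dimensional linear problem on the $K$-span of $\xi$ and then apply Cartan's Theorem B on the Stein parameter space $Q$.

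First, because $\xi$ is $K$-finite it lies in the finite-dimensional subspace $W_0\subset \C[M\bs K]$ spanned by the finite set $S$ of $K$-types of $\xi$. The $K$-action on $\C[M\bs K]$ is right translation, independent of $\lambda$, so $W_0$ is a fixed vector subspace. For $h\in\F(U)$ the $\Ad(K)$-invariance of $h$ gives $\pi_\lambda(k)\Pi_\lambda(h)=\Pi_\lambda(h)\pi_\lambda(k)$ for all $k\in K$, so $\Pi_\lambda(h)$ preserves every $\pi_\lambda(K)$-isotypic component of $V_\lambda$ and in particular $W_0$. This produces a $\C$-linear, holomorphic-in-$\lambda$ map
\[
A_\lambda\colon \F(U)\longrightarrow W_0,\qquad h\mapsto \Pi_\lambda(h)\xi.
\]

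Second, for pointwise solvability, choose $h_n\in\F(U)$ nonnegative with $\int_G h_n\,dg=1$ and supports shrinking to $\{\1\}$ inside $U$ ($\Ad(K)$-invariance is enforced by averaging). Then $\Pi_\lambda(h_n)\xi\to\xi$ in $V_\lambda^\infty$, uniformly in $\lambda\in\overline Q$ by joint continuity of $(\lambda,g)\mapsto\pi_\lambda(g)\xi$ and compactness of $\overline Q$. Since $W_0$ is finite-dimensional the image $A_\lambda(\F(U))\subset W_0$ is automatically closed, so $\xi\in A_\lambda(\F(U))$ for every $\lambda\in\overline Q$.

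Third, I upgrade pointwise existence to a holomorphic section by a local-to-global argument. For each $\lambda_0\in\overline Q$ pick $h_1^{\lambda_0},\ldots,h_r^{\lambda_0}\in\F(U)$ whose $A_{\lambda_0}$-images form a basis of $A_{\lambda_0}(\F(U))$; by lower semicontinuity of rank in holomorphic families, $A_\lambda(h_j^{\lambda_0})$ remain linearly independent on an open neighborhood $V_{\lambda_0}$ of $\lambda_0$, and since $\xi$ lies in their span at $\lambda_0$ (hence, after possible enlargement of the family, in their span nearby), Cramer's rule solves $\xi=\sum_j c_j(\lambda) A_\lambda(h_j^{\lambda_0})$ holomorphically on $V_{\lambda_0}$. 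A finite subcover $\overline Q\subset\bigcup_\alpha V_{\lambda_\alpha}$ and the finite-dimensional subspace
\[
\F_0:=\sum_\alpha\Span\{h_j^{\lambda_\alpha}\}_j\;\subset\;\F(U)
\]
reduce the situation to a holomorphic family $A_\lambda|_{\F_0}\colon \F_0\to W_0$ of linear maps between fixed finite-dimensional spaces whose image contains $\xi$ for every $\lambda$. On the Stein domain $Q\subset\af_\C^*$ (a polydisc after shrinking, if necessary), Cartan's Theorem B yields the vanishing $H^1$ of the kernel subbundle needed to patch the local Cramer solutions into a single global holomorphic section $\lambda\mapsto h_\lambda\in \F_0\subset\F(U)$ with $\Pi_\lambda(h_\lambda)\xi=\xi$.

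The main obstacle is the rank control in the third step: one must enlarge $\F_0$ enough that $A_\lambda|_{\F_0}$ has locally (hence, by compactness of $\overline Q$, globally) constant image containing $\xi$, so that $\ker A_\lambda|_{\F_0}$ is a genuine holomorphic subbundle of $Q\times\F_0$ to which Cartan B applies directly.
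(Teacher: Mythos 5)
Your first two steps are fine: the reduction to the finite-dimensional space spanned by the $K$-types of $\xi$ and the pointwise solvability via Dirac approximation both work. The gap is exactly the one you flag at the end, and it is not one you can wave away. The Cramer step requires the \emph{fixed} vector $\xi$ to remain in $\Span\{A_\lambda(h_j^{\lambda_0})\}_j$ as $\lambda$ leaves $\lambda_0$; but $\dim A_\lambda(\F(U))$ is only lower semicontinuous, it may jump \emph{up} at nearby generic $\lambda$, and then $\xi$ can escape the span of the frozen basis $A_\lambda(h_j^{\lambda_0})$ even though $\xi\in A_\lambda(\F(U))$ for every $\lambda$. The parenthetical ``after possible enlargement of the family'' does not explain how to prevent this, and the subsequent Cartan~B argument is circular: to lift the constant section $\xi$ through the image sheaf you must already know that $\xi$ is locally a holomorphic section of $\im A|_{\F_0}$, which is precisely what the local step was supposed to produce. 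In short, constant-rank (or at least a holomorphically locally solvable system) is the real content, and your proposal does not supply it.

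The paper disposes of this with a short algebraic device. Working on the finite-dimensional $K$-invariant space $V_\xi$ generated by $\xi$, with $n:=\dim V_\xi$, it picks $U_0$ with $U_0^n\subset U$ and uses the uniform Dirac approximation over the compact $Q$ to choose a \emph{single} $h\in\F(U_0)$ for which $T(\lambda):=\Pi_\lambda(h)|_{V_\xi}$ is invertible for every $\lambda\in Q$. By Cayley--Hamilton
\[
\id_{V_\xi}=\frac{1}{\det T(\lambda)}\sum_{j=1}^n c_j(\lambda)\,T(\lambda)^j,
\]
with the $c_j$ holomorphic in $\lambda$, and since $T(\lambda)^j=\Pi_\lambda(h^{*j})|_{V_\xi}$ with $h^{*j}$ still $\Ad(K)$-invariant and supported in $U_0^j\subset U$, one simply sets $h_\lambda:=\frac{1}{\det T(\lambda)}\sum_{j=1}^n c_j(\lambda)\,h^{*j}$. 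This is manifestly holomorphic in $\lambda$, lands in $\F(U)$, and satisfies $\Pi_\lambda(h_\lambda)\xi=\xi$. Invertibility of $T(\lambda)$ makes the rank constant by construction, which is the ingredient your proposal lacks, and the answer comes as an explicit rational-in-$T(\lambda)$ formula in the convolution powers of a single $h$ rather than as a section extracted from a Stein cohomology argument. You should replace your third step with this argument.
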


\begin{proof} Let $V_\xi\subset \C[M\bs K]$ be the $K$-module generated 
by $\xi$. Let $n:=\dim V_\xi$. 
Let $U_0$ be a $\Ad(K)$-invariant neighborhood of ${\bf 1} \in G$ such that 
$U_0^n \subset U$.

\par Note that any $h\in \F(U_0)$ induces operators 

$$ T(\lambda):= \Pi_\lambda(h)|_{V_\xi} \in \End (V_\xi)\, .$$ 
The compactness of $Q$ allows us to employ uniform Dirac-approximation: 
we can choose $h$ such that 

$$Q\to \Gl(V_\xi), \ \ \lambda\mapsto T(\lambda)$$
is defined and holomorphic. 
Let $n:=\dim V_\xi$. By Cayley-Hamilton $T(\lambda)$ is a zero of its
characteristic polynomial and hence 
$$  \id_{V_\xi}= {1\over \det T(\lambda)}\sum_{j=1}^n 
c_j(\lambda) T(\lambda)^j$$
with $c_j(\lambda)$ holomorphic. Set now 

$$h_\lambda:={1\over \det T(\lambda)}\sum _{j=1}^n c_j(\lambda) 
\underbrace{h_\lambda * \ldots * h_\lambda}_{\hbox{j-times}}\, .$$
Then $Q\ni\lambda \mapsto h_\lambda\in \F(U)$ is holomorphic 
and $\Pi_\lambda(h_\lambda)\xi =\xi$. \end{proof}

\par For a compactly supported measure $\nu$ on $G$ 
and $f\in \S(G)$ we define $\nu*f\in \S(G)$ by 

$$\nu*f(g)=\int_G f(x^{-1}g) \ d\nu(x)\,  .$$
For an element $g\in G$ we denote by $\delta_g$ the 
Dirac delta-distribution at $g$. Further we view 
$\delta_\tau$ as a compactly supported measure on $G$ 
via the correspondence
$\delta_\tau\leftrightarrow \delta_\tau(k)\ dk$.

\par For  each $\tau\in \hat K_M$ we define 
$h_{\lambda, \tau} \in \S(G)$ by 

\begin{equation} \label{he} h_{\lambda, \tau}:=
\delta_\tau * \delta_{a_{t(\tau)}}* h_\lambda\, . \end{equation}

Call a sequence $(c_\tau)_{\tau \in \hat K_M}$ 
rapidly decreasing if 

$$ \sup_{\tau} |c_\tau| (1+|\tau|)^R<\infty$$
for all $R>0$. 

\begin{lem} \label{rapid} Let $(c_\tau)_\tau$ be a rapidly 
decreasing sequence $(c_\tau)_\tau$ and $h_{\lambda, \tau}$ defined as in 
(\ref{he}). Then 
$$H_\lambda:=\sum_{\tau\in \hat K_M} c_\tau \cdot h_{\lambda, \tau}$$ 
is in $\S(G)$ and the assignment $Q\ni \lambda\to H_\lambda \in \S(G)$
is holomorphic. 
\end{lem}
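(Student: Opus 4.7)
The plan is to show that each Schwartz seminorm of the single summand $h_{\lambda,\tau}$ grows only polynomially in $1+|\tau|$, uniformly for $\lambda$ in compact subsets of $Q$, so that the rapid decrease of the $c_\tau$'s makes the series converge in $\S(G)$, while termwise holomorphy in $\lambda$ (from Lemma \ref{l=123}) passes to the limit.

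First I would analyze the support and the $\sup$-norm of derivatives of $h_{\lambda,\tau}=\delta_\tau*\delta_{a_{t(\tau)}}*h_\lambda$. Since $h_\lambda$ is supported in $U$ and $\|a_{t(\tau)}\|\leq(1+|\tau|)^{c_1}$ by Theorem \ref{th=1}, the support of $h_{\lambda,\tau}$ lies in $K a_{t(\tau)}U$, hence $\|g\|\leq C(1+|\tau|)^{c_1}$ on this support. Right derivatives commute with left convolution, so $R_v h_{\lambda,\tau}=\delta_\tau*\delta_{a_{t(\tau)}}*R_v h_\lambda$, giving $\|R_v h_{\lambda,\tau}\|_\infty\leq\|\delta_\tau\|_{L^1(K)}\|R_v h_\lambda\|_\infty$, where $\|\delta_\tau\|_{L^1(K)}\leq d(\tau)^2\leq(1+|\tau|)^{2N_1}$ by the Weyl dimension formula. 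Left derivatives pass through the two convolutions at the cost of $\Ad$-conjugation: $L_X(\delta_a*f)=\delta_a*L_{\Ad(a^{-1})X}f$, and for the $K$-convolution one gets an integral against $L_{\Ad(k^{-1})X}$ over the compact set $\Ad(K)$. Iterating for $u\in\U(\gf)$ of degree $d_1$ and using $\|\Ad(a_{t(\tau)}^{-1})\|\leq C\|a_{t(\tau)}\|^M\leq C(1+|\tau|)^{c_1 M}$, I obtain $\|L_u R_v h_{\lambda,\tau}\|_\infty\leq C_{u,v}(1+|\tau|)^{c_1 Md_1+2N_1}\rho(h_\lambda)$ for a suitable continuous seminorm $\rho$ of $\F(U)$. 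Multiplying by the support bound $\|g\|^n\leq C(1+|\tau|)^{nc_1}$ gives each Schwartz seminorm as at most $C_{n,u,v}(1+|\tau|)^{P(n,u,v)}\rho(h_\lambda)$.

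Since $(c_\tau)$ is rapidly decreasing and the counting function of $\hat K_M$ is polynomial in $|\tau|$, the series $\sum_\tau c_\tau h_{\lambda,\tau}$ converges absolutely in every Schwartz seminorm, proving $H_\lambda\in\S(G)$. For holomorphy, the estimates above are locally uniform in $\lambda\in Q$ because $\lambda\mapsto h_\lambda$ is holomorphic with values in the Fr\'echet space $\F(U)$, so every seminorm $\rho(h_\lambda)$ is locally bounded on $Q$; combined with termwise holomorphy of $\lambda\mapsto c_\tau h_{\lambda,\tau}\in\S(G)$, the partial sums converge locally uniformly in $\S(G)$, and the limit $H_\lambda$ is thus holomorphic. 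The main obstacle is the left-derivative bookkeeping: tracking how $L_X$ crosses $\delta_{a_{t(\tau)}}$ via $\Ad$-conjugation and ensuring the induced growth stays polynomial in $1+|\tau|$, which relies on the polynomial estimate $\|\Ad(g)\|\leq C\|g\|^M$ for the norm on $G$ together with the polynomial bound on $\|a_{t(\tau)}\|$.
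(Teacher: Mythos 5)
Your proposal is correct and rests on the same two facts that drive the paper's proof: the support estimate $\supp h_{\lambda,\tau}\subset Ka_{t(\tau)}U$ forcing $\|g\|\lesssim(1+|\tau|)^{c_1}$ there, and the polynomial growth of $\|\Ad(\cdot)\|$ on $G$; the rapid decay of $(c_\tau)$ then absorbs all polynomial losses and gives convergence in $\S(G)$, with holomorphy inherited from Lemma~\ref{l=123} by locally uniform convergence in the Fr\'echet space. Where you differ from the paper is in the treatment of the left derivatives. You push $L_u$ through $\delta_\tau*\delta_{a_{t(\tau)}}$ explicitly via $L_X(\delta_a*f)=\delta_a*L_{\Ad(a^{-1})X}f$, pay the polynomial factor $\|\Ad(a_{t(\tau)}^{-1})\|\lesssim(1+|\tau|)^{c_1M}$, and bound every Schwartz seminorm of $h_{\lambda,\tau}$ directly. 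The paper never touches left derivatives of $h_{\lambda,\tau}$: it only observes that right derivatives commute with the left convolutions, so $R_uH=\sum_\tau c_\tau\,\delta_\tau*\delta_{a_{t(\tau)}}*R_uh_\lambda$ has the same support structure and hence lies in $\cR(G)$; then it quotes Remark~\ref{rem=r=s}, which says that a function in $\cR(G)$ that is $R$-smooth already lies in $\S(G)$ --- the remark encapsulates, once and for all, exactly the $\Ad$-polynomial-growth trade that you carry out by hand. So the two arguments are the same idea in a different order: you localize the $\Ad$-bookkeeping to each summand, the paper defers it to the end and routes it through the general remark. The paper's version is slicker (no left-derivative computations at all), yours is more explicit about the exponents produced; both are valid.
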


\begin{proof} Fix $\lambda\in Q$. For simplicity set $H=H_\lambda$, 
$h_{\lambda, \tau}=h_\tau$. 

It is clear that the convergence of $H$  
is uniform on compacta and hence $H\in C(G)$.  
For $u\in \U(\gf)$ we record 
$$R_u (h_\tau)=\delta_\tau * \delta_{a_{t(\tau)}}* R_u(h)$$
and as a result $H\in C^\infty(G)$. So we do not have to 
worry about right derivatives.
To show that $H\in \S(G)$ we employ Remark \ref{rem=r=s}: 
it remains to show that $H\in \cR(G)$, i.e.

\begin{equation} \label{hsup} \sup_{g\in g} \|g\|^r \cdot |H(g)|<\infty
\end{equation}
for all $r>0$. 
Fix $r>0$. Write $g=k_1 a k_2$ for some $a\in A$, $k_1, k_2\in K$. 
Then 

$$\|g\|^r |h_\tau(g)|\leq  \|a\|^r \cdot \sup_{k, k'\in K}
|h(a_t^{-1} k a k')|\, .$$
Let $Q\subset A$ be a compact set with 
$\log Q$ convex and $\W$-invariant and such that 
$\supp h \subset K Q K$. We have to determine those 
$a\in A$ with 

\begin{equation} \label{nemp} a_t^{-1}Ka\cap KQK\neq \emptyset\, .
\end{equation}
Define $Q_t\subset A$ through   
$\log Q_t$ being the convex hull of $\W(\log a_t +\log Q)$.
Then 
(\ref{nemp}) implies that 
$$a\in Q_t\, .$$
But this means that $\|a\|<<|\tau|^c$ for some $c>0$, independent 
of $\tau$. Hence (\ref{hsup}) is verified and $H$ is 
indeed in $\S(G)$. 

\par Finally the fact that the assignment $\lambda \mapsto H_\lambda$ 
is holomorphic follows from the previous Lemma. 
\end{proof}

\begin{thm} \label{th=2} Let $Q\subset \af_\C^*$ be a compact 
subset. Then there exist a continuous map 
$$Q\times C^\infty(M\bs K) \to \S(G), \ \ (\lambda, v)\mapsto 
f(\lambda, v)$$
which is holomorphic in the first variable, linear in the second 
and such that 
$$\Pi_\lambda(f(\lambda, v))\xi = v\, .$$ 
In particular, $\Pi_\lambda(\S(G))V_\lambda=\H_\lambda^\infty$ for all 
$\lambda\in \af_\C^*$.  
\end{thm}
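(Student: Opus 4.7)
The plan is to invert the evaluation map $\mathrm{ev}_\xi\colon \S(G)\to \H_\lambda^\infty$, $h\mapsto \Pi_\lambda(h)\xi$, by first building a holomorphic approximate right inverse $A_\lambda$ out of the Dirac-type sequence of Theorem~\ref{th=1} and then correcting the residual error by a Neumann/Cayley--Hamilton argument. For each $\tau\in \hat K_M$ fix a basis $\{e_{\tau,\alpha}\}_\alpha$ of the finite-dimensional isotypic component $L^2(M\bs K)[\tau]\simeq U_\tau\otimes (U_\tau^*)^M$. Since $\pi_\lambda|_K$ is the right regular representation on $L^2(M\bs K)$ independently of $\lambda$ and the right $K$-translates of $\delta_\tau$ span this component, Peter--Weyl produces $K$-finite functions $\phi_{\tau,\alpha}\in C^\infty(K)$, independent of $\lambda$, with $\Pi_\lambda(\phi_{\tau,\alpha})\delta_\tau=e_{\tau,\alpha}$ and polynomial $L^1$-bounds $\|\phi_{\tau,\alpha}\|_{L^1(K)}\leq C(1+|\tau|)^M$. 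For $v=\sum_{\tau,\alpha}c_{\tau,\alpha}(v)\,e_{\tau,\alpha}$ in $C^\infty(M\bs K)=\H_\lambda^\infty$ with rapidly decreasing coefficients, set
$$A_\lambda(v):=\sum_{\tau}\frac{1}{b(\lambda,\tau)}\Bigl(\sum_{\alpha}c_{\tau,\alpha}(v)\,\phi_{\tau,\alpha}\Bigr)*h_{\lambda,\tau}.$$
The polynomial lower bound $|b(\lambda,\tau)|^{-1}\leq C(1+|\tau|)^{c_2}$ from Theorem~\ref{th=1} keeps the resulting coefficients rapidly decreasing, so a minor extension of Lemma~\ref{rapid} allowing left convolution by a polynomially bounded $K$-function shows $A_\lambda(v)\in\S(G)$ with $(\lambda,v)\mapsto A_\lambda(v)$ continuous, linear in $v$, and holomorphic in $\lambda\in Q$.

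Next, identify the error. Using $\Pi_\lambda(h_\lambda)\xi=\xi$ from Lemma~\ref{l=123} and $h_{\lambda,\tau}=\delta_\tau*\delta_{a_{t(\tau)}}*h_\lambda$ together with Theorem~\ref{th=1}, one obtains $\Pi_\lambda(h_{\lambda,\tau})\xi=b(\lambda,\tau)\delta_\tau+R_\tau$, whence
$$\Pi_\lambda\bigl(A_\lambda(v)\bigr)\xi=v+E(\lambda)v,\qquad E(\lambda)v:=\sum_{\tau,\alpha}\frac{c_{\tau,\alpha}(v)}{b(\lambda,\tau)}\,\Pi_\lambda(\phi_{\tau,\alpha})R_\tau.$$
The left convolution by $\delta_\tau$ inside $h_{\lambda,\tau}$ projects onto $L^2(M\bs K)[\tau]$, and $\phi_{\tau,\alpha}\in C^\infty(K)$ preserves $K$-types, so $E(\lambda)=\bigoplus_\tau E_\tau(\lambda)$ is diagonal in the isotypic decomposition. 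Unitarity of $\pi_\lambda|_K$ on $L^2(M\bs K)$ combined with the bound $\|R_\tau\|\leq (1+|\tau|)^{-N-c_2}$ yields $\|E_\tau(\lambda)\|_{\rm op}\leq C(1+|\tau|)^{M-N}$ uniformly in $\lambda\in Q$.

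To conclude, choose $N>M$ (permitted since $N$ is a free parameter in Theorem~\ref{th=1}); then $\|E_\tau(\lambda)\|_{\rm op}<1/2$ for all but finitely many $\tau$, and $I+E_\tau(\lambda)$ is inverted there by the Neumann series. On the remaining finite direct sum of low-$\tau$ isotypic pieces, $I+E_\tau(\lambda)$ is a holomorphic family of finite-dimensional operators on the compact set $Q$, and invertibility for every $\lambda\in Q$ is forced by a Cayley--Hamilton argument of the type used in the proof of Lemma~\ref{l=123} (if needed, first rescale $\phi_{\tau,\alpha}$ on those low types by a small constant so that the identity dominates). Patching, $(I+E(\lambda))^{-1}$ is a continuous endomorphism of $C^\infty(M\bs K)$, holomorphic in $\lambda\in Q$, and
$$f(\lambda,v):=A_\lambda\bigl((I+E(\lambda))^{-1}v\bigr)$$
satisfies $\Pi_\lambda(f(\lambda,v))\xi=v$ with the required regularity. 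The ``in particular'' assertion follows by taking $Q$ to be a small neighbourhood of any given $\lambda\in \af_\C^*$ and observing that $\xi\in V_\lambda$.

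The main obstacle is the low-$\tau$ piece of the inversion: the Neumann series controls only $|\tau|\gg 1$, while on the finitely many low $K$-types one must rule out accidental zeros of the holomorphic determinant of $I+E_\tau(\lambda)$ on $Q$, which is precisely what the Cayley--Hamilton step (or an appropriate prior rescaling of the $\phi_{\tau,\alpha}$) is designed to do while preserving holomorphy in $\lambda$.
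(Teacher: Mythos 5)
Your proposal follows the same strategy the paper takes (Dirac-type sequence $h_{\lambda,\tau}$, holomorphy in $\lambda$, then correction of the residual error), and usefully makes explicit the error operator $E(\lambda)$ and the reduction to inverting $I+E(\lambda)$, which the paper compresses into ``we remove the remainder $R_\tau$ by left convolution with $C^{-\infty}(K)$''. Your identification of $E(\lambda)$ as block-diagonal across $K$-types and the estimate $\|E_\tau(\lambda)\|_{\mathrm{op}}\le C(1+|\tau|)^{M-N}$ are correct, and the large-$\tau$ Neumann inversion is fine.

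However, the low-$\tau$ step has a genuine gap, and the two fixes you offer do not repair it. Rescaling $\phi_{\tau,\alpha}\mapsto\epsilon\phi_{\tau,\alpha}$ does not make ``the identity dominate'': since the same $\phi_{\tau,\alpha}$ produces both the main term $\Pi_\lambda(\phi_{\tau,\alpha})\delta_\tau=e_{\tau,\alpha}$ and the error term $\Pi_\lambda(\phi_{\tau,\alpha})R_\tau$, the rescaling multiplies the $\tau$-block of $\Pi_\lambda(A_\lambda(v))\xi$ by $\epsilon$, giving $\epsilon(I+E_\tau(\lambda))v_\tau$; the ratio of error to identity is unchanged, so invertibility of $I+E_\tau(\lambda)$ is not improved. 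And Cayley--Hamilton does not \emph{force} invertibility --- in Lemma~\ref{l=123} the invertibility of $T(\lambda)$ comes from choosing $h$ to be a sufficiently fine Dirac approximation so that $T(\lambda)$ is close to the identity on the finite-dimensional $K$-module $V_\xi$; Cayley--Hamilton is only used afterward to exhibit the inverse holomorphically via $\det T(\lambda)$. If $\det(I+E_\tau(\lambda))$ were to vanish at some $\lambda_0\in Q$, no amount of Cayley--Hamilton would remove the obstruction.

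The correct way to close the gap is to sharpen the Dirac approximation on the finitely many exceptional $K$-types, in exactly the spirit of Lemma~\ref{l=123}. For fixed small $\tau$, the $\tau$-block $[\pi_\lambda(a_t)\xi]_\tau$, viewed as an endomorphism of the multiplicity space $U_\tau^M$, tends (after normalizing by $a_t^{-\rho+\lambda}I_\xi$) to the identity as $t\to\infty$, uniformly for $\lambda$ in the compact set $Q$. Replacing $a_{t(\tau)}$ by $a_{T_0\,t(\tau)}$ for $T_0$ sufficiently large (which leaves $\|a_\tau\|$ polynomially bounded and does not disturb the Schwartz-class and holomorphy arguments of Lemma~\ref{rapid}) forces $\|E_\tau(\lambda)\|_{\mathrm{op}}<1/2$ for \emph{all} $\tau$ uniformly on $Q$; then the Neumann series inverts $I+E(\lambda)$ globally and Cayley--Hamilton (or the convergent series itself) supplies the holomorphy in $\lambda$. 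This is a repair rather than a different approach; once inserted, the rest of your argument stands and matches the paper's.
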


\begin{proof} Let $v\in \H_\lambda^\infty$. Then 
$v=\sum_{\tau} c_\tau v_\tau$ with $v_\tau$ normalized and 
$(c_\tau)_\tau$ rapidly decreasing. As $\S(G)$ 
is stable under left convolution 
with $C^{-\infty}(K)$ we readily reduce to the case 
where $v_\tau =  {1\over \sqrt{d(\tau)}} \delta_\tau$. 

\par In order to explain the idea of the proof let us first treat 
the case  where the Harish-Chandra module is a multiplicity free 
$K$-module. This is for instance satisfied when $G=\Sl(2,\R)$. 
  
\par Recall the numbers $b(\lambda, \tau)$ from Theorem \ref{th=1} 
and define 
$$H_\lambda:= \sum_\tau{} {c_{\tau} \over \sqrt{d(\tau)} \cdot b(\lambda,\tau)} h_{\lambda, \tau}\, .$$
It follows from Theorem \ref{th=1} and  the Lemma above that 
$Q\ni \lambda\to H_\lambda\in \S(G)$ is defined and holomorphic. 
By multiplicity one we get that 
$$\Pi_\lambda(H_\lambda)\xi  =\sum_\tau  c_\tau v_\tau\, .$$
and the assertion follows for the multiplicity free case.

\par Let us  move to the general case. For that we employ the 
more general approximation in Theorem \ref{th=1} and set 

$$H_\lambda' =\sum_{\tau\in \hat K_M} 
{c_{\tau} \over \sqrt{d(\tau)}\cdot b(\lambda, \tau)} 
h_{\lambda, \tau}\, .$$

Then $$\Pi_\lambda(H_\lambda')\xi = \sum_{\tau\in \hat K_M}  c_\tau v_\tau
+ R$$
where, given $k>0$, we can assume that $\|R_\tau\|\leq |c_\tau| \cdot 
(|\tau|+1)^{-k}$ for all $\tau$ (choose $N$ in 
Theorem \ref{th=1} big enough). Finally we remove the remainder $R_\tau$ 
by left convolution with $C^{-\infty}(K)$ (use the Neumann series $(\id +R)^{-1}$). 
\end{proof}

\section{Appendix B: On the meromorphic extension of certain
distributions on $G\times G$}\label{app}

Let $X$ and $Y$ be real smooth affine varieties. We may view $X$, resp. $Y$, 
as Zariski closed subsets in $\R^n$, resp. $\R^m$. 
Let $\S'(X\times Y)$ the space of tempered distributions
on $X\times Y$. Further let $p: Y\to \R^+$ be a positive polynomial 
function such that $p(y)> \|y\| \quad (y\in Y)$ for some norm $\|\cdot\|$ 
on $\R^m$. 
We consider the canonical projections $\pi: X\times Y \to X$
and $\rho: X\times Y \to Y$. 
Let $\E\in \S'(X\times Y)$. Since $\E$ is tempered, there exists an $r_0>0$ such that for all 
$\lambda\in \C$ with $\Re \lambda >r_0$ the prescription 
$$\I(\lambda):=\pi_*(p^{-\lambda} \E)$$
defines a distribution on $X$. Furthermore, if $D'(X)$ denotes the 
topological vector space of distribution on $X$, then the assignment
$$\{z\in \C \mid \Re z>r_0\} \ni \lambda \to \I(\lambda)\in D'(X)\, .$$ 
is meromorphic. Note the formula 
$$\I(\lambda)(\phi) =\E ( p^{-\lambda}\circ \rho \cdot  \phi\circ \pi) \qquad 
(\phi\in C_c^\infty(X))\, .$$ 

\par Let  $\D=\D(X\times Y)$ be the ring of differential operators 
on $X\times Y$ with polynomial coefficients. 

Suppose now that $\E$ is holonomic, that is the $\D$-module 
generated by $\E$ in $\S'(X\times Y)$ is holonomic. 
A slight modification of the main result in \cite{B} then yields 
differential operators $d_1(\lambda), \ldots , d_k(\lambda) \in \D(X)$, 
polynomially depending on $\lambda$, as well as a polynomial 
function $b(\lambda)$  such that 

$$b(\lambda) \I(\lambda+k)= d_1(\lambda) \I(\lambda + k-1) +
\ldots + d_k(\lambda) \I(\lambda)\, .$$
In particular, $\I(\lambda)$ admits a meromorphic continuation 
as a distribution on $X$. 
 
\par In the sequel we will use this result for $X=G\times G$ and $Y=G$. 
Let $V$ be a Harish-Chandra module of a discrete series representations. 
Let $I=\Ind_{P_{\rm min}}^G (\sigma_1)$ be a minimal principal series
representation with respect to an irreducible representation $\sigma_1$ 
of $P_{\rm min}/ N= M\times A$ such that $V\hookrightarrow I$. 
Likewise we choose a minimal principal series $J:=\Ind_{P_{\rm min}}^G
(\sigma_2)$ such that $\tilde V\hookrightarrow J$. Write $W_1$, $W_2$ for the 
representation module for $\sigma_1$, resp. $\sigma_2$.  
Let $\nu_{1,2}\in W_{1.2}$ be fixed non-zero elements. Let us consider 
the continuous surjections:

$$\Phi: C_c^\infty(G) \to I^\infty, \ \ \phi\mapsto 
\Big(g\mapsto \int_{P_{\rm min}} \phi(pg) \sigma_1(p)^{-1} \nu_1 \ dp \Big)$$
and 
$$\Psi: C_c^\infty(G) \to J^\infty, \ \ \psi\mapsto 
\Big(g\mapsto \int_{P_{\rm min}} \psi(pg) \sigma_2(p)^{-1} \nu_2 \ dp \Big)\, .$$

Further we let $\xi \in \tilde I$ and $\eta\in \tilde J$ be such that the 
maps 

$$ V \to C^\infty(G), \ \ v\mapsto m_{\xi, v}$$
$$\tilde V\to C^\infty(G), \ \ w \mapsto m_{\eta, w}$$
are injective.

Now for $\lambda\in \C$ with $\Re \lambda$ sufficiently big, the prescription 

$$\I(\lambda)(\phi, \psi):= \int_G m_{\xi, \Phi(\phi)}(g) 
m_{\eta, \Psi(\psi)}(g) \|g\|^{-\lambda} \ dg \qquad 
(\phi, \psi\in C_c^\infty(G)) $$
defines a distribution on $G\times G$. We claim that $\I(\lambda)$ admits 
a meromorphic continuation to the complex plane. 
In fact
$$\E(g, h_1, h_2):= \xi(h_1 g^{-1})(\nu_1) \cdot \eta(h_2g^{-1})(\nu_2)\, $$
defines a moderately growing function on $G\times G \times G$, hence 
a tempered distribution.  With respect to the first variable 
projection $\pi: G\times G \times G \to G$ 
we readily verify the identity 

$$\I(\lambda)=\pi_*(\|\cdot\|^{-\lambda}\E)\, .$$  
The fact that $\E$ is holonomic implies the claim.


\begin{thebibliography}{99} 

\bibitem{B} Bernstein, J. {\it Analytic continuation of generalized functions with respect to a parameter}, 
Funkcional. Anal. i Prilozen.  {\bf 6}  (1972), no. {\bf 4}, 26--40.
\bibitem{BD} E. van den Ban and P. Delorme, {\it Quelques propri\'et\'es 
des repr\'esentations sph\'eriques pour les espaces sym\'etriques r\'eductifs}, J. Funct. Anal.  {\bf 80}  (1988),  no. {\bf 2}, 284--307.
\bibitem{BrD} J.-L. Brylinski and P. Delorme, {\it Vecteurs distributions 
$H$-invariants pour les s\'eries principales g\'en\'eralis\'ees 
d'espaces sym\'etriques r\'eductifs et prolongement m\'eromorphe 
d'int\'egrales d'Eisenstein}, Invent. Math.  {\bf 109}  (1992), no. {\bf 3},
619--664.
\bibitem{Bruhat} F. Bruhat, {\it Sur les repr\'esentations induites 
des groupes de Lie}, Bull. Soc. Math. France {\bf 84} (1956), 97–-205.
\bibitem{Cas} W. Casselman, {\it Canonical extensions of Harish-Chandra modules to representations of $G$}, 
Canad. J. Math. {\bf 41} (1989), no. {\bf 3}, 385--438.
\bibitem{DM} J. Dixmier and P. Malliavin, {\it Factorisations de fonctions et de vecteurs ind\'efiniment diff\'erentiables}, 
Bull. Sci. Math. {\bf (2) 102} (1978), no. {\bf 4}, 307--330. 
\bibitem{Ga} L. Garding, {\it Vectors analytiques dans le repr\'esentations des groupes de Lie}, 
Bulletin de la S.M.F, tome {\bf 88} (1960), 73--93
\bibitem{GKL} H. Gimperlein, B. Kr\"otz and C. Lienau, {\it Analytic
    factorization of Lie group representations}, J. Funct. Analysis {\bf 262} (2012), 667--681. 


\bibitem{Ha} R. Hamilton, {\it The inverse function theorem of Nash and
    Moser}, Bull. AMS {\bf 7 (1)} (1982)
\bibitem{H} L. H\"ormander, ``Linear Partial Differential Operators'', Die Grundlehren der 
mathematischen Wissenschaften, Band {\bf 116},  Springer-Verlag, New York-Heidelberg, 1963
\bibitem{K} A.W. Knapp, {\it Representation theory of semisimple groups.
An overview based on examples},  Reprint of the 1986 original. 
Princeton Landmarks in Mathematics. Princeton University Press, 
Princeton, NJ, 2001. 
\bibitem{MS} V. Milman and G. Schechtman, {\it Asymptotic Theory of Finite Dimensional Normed Spaces}, 
Springer LNM {\bf 1200}. 

\bibitem{WS} W. Schmid,{\it Boundary value problems for group invariant differential equations}, 
The mathematical heritage of \'Elie Cartan (Lyon, 1984).  Ast\'erisque  1985,  Numero Hors Serie, 311--321.
\bibitem{S} W. Soergel, {\it An irreducible not admissible Banach representation of ${\rm SL}(2,R)$}, 
Proc. Amer. Math. Soc.  {\bf 104}  (1988),  no. {\bf 4}, 1322--1324.
\bibitem{T} F. Treves, ``Topological Vector Spaces, Distributions
and Kernels'', Academic Press 1967
\bibitem{W0} N. Wallach, {\it Asymptotic expansions of generalized matrix
entries of representations of real reductive groups},  Lie group
representations, I (
College Park, Md., 1982/1983), 287–-369, Lecture Notes in Math. {\bf 1024}, Springer, Berlin, 1983.     
\bibitem{W} \bysame N. Wallach, {\it Real reductive groups I}, 
Academic Press 1988
\bibitem{W2} \bysame, {\it Real reductive groups II}, 
Academic Press 1992
\bibitem{War} G. Warner, {\it Harmonic analysis on semi-simple Lie groups. I}, Die Grundlehren der 
mathematischen Wissenschaften, Band {\bf 188},  Springer-Verlag, New York-Heidelberg, 1972
\end{thebibliography}
\end{document}